\title{Groups elementarily equivalent to the classical matrix groups}
\author{Alexei Myasnikov\footnote{Address: Stevens Institute of Technology, Department of Mathematical Sciences, Hoboken, NJ 07087, USA. Email: amiasnik@stevens.edu.} , 
 Mahmood Sohrabi\footnote{Address: Stevens Institute of Technology, Department of Mathematical Sciences, Hoboken, NJ 07087, USA. Email: msohrab1@stevens.edu 
 }}
\date{}
\newtheorem{thm}{Theorem}[section]
\newtheorem*{thm*}{Theorem}
\newtheorem{proposition}[thm]{Proposition}
\newtheorem{lemma}[thm]{Lemma}
\newtheorem{corollary}[thm]{Corollary}
\newtheorem{defn}[thm]{Definition}
\theoremstyle{remark}
\newtheorem{rem}[thm]{Remark}
\newcommand{\la}{\mathcal}
\newcommand{\mbb}{\mathbb}
\newcommand{\N}{\mathbb{N}}
\newcommand{\Z}{\mbb{Z}}
\newcommand{\Q}{\mbb{Q}}
\newcommand{\R}{\mbb{R}}
\newcommand{\C}{\mbb{C}}
\newcommand{\T}{\mathcal{T}}
\newcommand{\QM}{\mathbb{Q}^\times}
\newcommand{\KM}{K^\times}
\newcommand{\FM}{F^\times}
\newcommand{\RM}{R^{\times}}
\newcommand{\UTR}{\textrm{UT}_n(R)}
\newcommand{\UTF}{\textrm{UT}_n(F)}
\newcommand{\UTL}{\textrm{UT}_n(L)}
\newcommand{\TF}{\textrm{T}_n(F)}
\newcommand{\TR}{\textrm{T}_n(R)}
\newcommand{\TRf}{\textrm{T}_n(R,\bar{f})}
\newcommand{\SLF}{\textrm{SL}_n(F)}
\newcommand{\GLF}{\textrm{GL}_n(F)}
\newcommand{\GL}{\textrm{GL}}
\newcommand{\SL}{\textrm{SL}}
\newcommand{\diag}{\textrm{diag}}
\newcommand{\Lgroups}{\mathcal{L}_{\text{groups}}}
\newcommand{\one}{\textbf{1}}
\newcommand{\define}{\stackrel{\text{def}}{=}}
\newcommand{\ds}{\displaystyle}
\numberwithin{equation}{section}
\begin{document}

\maketitle

\begin{center}
  \large   To Gerhard Rosenberger as a token of our friendship
\end{center}

\medskip
\begin{abstract}\noindent In this paper we describe all groups that are first-order (elementarily) equivalent to the classical matrix groups such as $GL_n(F), SL_n(F)$ and $T_n(F)$ over a field $F$ provided $n \geq 3$.\\

{\bf 2010 MSC:} 03C60, 20F16\\

{\bf Keywords:} First-order equivalence, Elementary equivalence, \\ Bi-interpretability, Special linear group, General linear group, Abelian deformation.  \end{abstract}
\section{Introduction}

In this paper, we solve the first-order classification problem for the classical matrix groups $GL_n(F), SL_n(F)$ and $T_n(F)$ over fields $F$ provided $n \geq 3$, i.e., we describe the algebraic structure of all groups $H$ such that $H \equiv GL_n(F)$ or $H \equiv SL_n(F)$, or $H \equiv T_n(F)$.  Note that this gives a far reaching generalization of the famous Malcev's theorems stating that $GL_n(F) \equiv GL_m(L)$ or $SL_n(F) \equiv SL_m(L)$ imply that $n = m$ and $F \equiv L$ for all fields $F, L$ and numbers $n, m \geq 3$ \cite{Mal}. To prove this, Malcev showed that for $G \in \{GL_n(F), SL_n(F), T_n(F)\}$ the parameter $n$ and the field $F$ can be described by finitely many sentences that hold in $G$. More precisely, in the modern terminology, he showed that $n$ and $F$ are interpreted in $G$ in some nice ``uniform" way (regularly interpreted). Here, we go further and show that for such groups $G$ not only $n$ and $F$, but also the algebraic scheme itself $GL_n, SL_n$ or $T_n$, perhaps up to a cohomological abelian deformation, is regularly interpretable in $G$. It follows that any group $H$ with $H \equiv G$ is an abelian deformation of the same algebraic group as $G$ over a field $L$ with $L \equiv F$. In fact, in the case $G = SL_n(F)$ no abelian deformations are required, while for $G = GL_n(F)$ or $G = T_n(F)$ they are unavoidable. 
These groups obtained by abelian deformations from $GL_n(L)$ or $T_n(L)$ are interesting in their own right and deserve a separate study.

Our choice of $GL_n, SL_n$ and $T_n$  was mostly influenced by the desire to show various types of algebraic descriptions that may occur when one describes all models of complete theories of such groups using regular interpretations.  In \cite{MS2} the authors showed that these methods work for classical matrix groups over rings of algebraic integers. Recently,  Bunina and Gvozdevsky used this approach successfully for a very large class of Chevalley groups $G$ on rings, thus solving the first-order classification problem for such $G$ \cite{BG}. These strengthened the earlier results \cite{Segal-Tent} on  bi-interpretability with parameters for Chevalley groups due to Segal and Tent.   Avni, Lubotzky, and Meiri studied the model theory of high-rank arithmetic groups applying methods of interpretation \cite{AvniMeiri1,AvniMeiri2}. 
On the other hand, some time ago, the first-order classification problem was solved, using essentially the same technique, for algebraic nilpotent $k$-groups (unipotent $k$-groups) over arbitrary fields $k$ of characteristic zero \cite{M87}. Later, these results were extended to arbitrary finitely generated nilpotent groups \cite{MS2012}. The model theory of unitriangular groups was studied in detail in \cite{beleg94,beleg99}. All the results mentioned above indicate that there might be a general approach to the first-order classification problem of algebraic groups, based on the methods of interpretations. Furthermore, it seems that these methods and results provide an outline of the emerging unified model theory of algebraic groups, which organically includes the theory of abstract isomorphisms of such groups \cite{BM,Bunina1,Bunina2,Bunina3,Bunina4,Bunina5,BMPbook}.

\subsection{General approach}

Here, we give a general description of our approach. Throughout this paper, we assume that $n \geq 3$.

This approach is based on recent advances in the theory of interpretability (we refer to a survey~\cite{KMS}  for details), especially on the techniques of bi-interpretability ~\cite{MS1, Khelif, Nies,AKNS,Segal-Tent,KM2,KM3,MN1,DM1}.  The notion of regular interpretability is somewhat in the middle between absolute interpretability and interpretability with parameters. 
The former one fits extremely well for questions related to the first-order equivalence, but occurs rather rarely. The interpretability with parameters is a more common one, but the parameters are usually not in the language of $G$, which presents a problem when describing models of $Th(G)$. The regular interpretability (or regular bi-interpretability) combines the useful properties of both: it can be used in the first-order classification problem in a way similar to the absolute one, and occurs almost as often as interpretability with parameters. To go further we need the following notation (see Section \ref{sec:sln}):  if a structure $\mathcal A$ is interpreted in a structure $\mathcal B$ by a set of formulas $\Gamma$ and a tuple of parameters $\bar p \in \mathcal B^n$ we write $\mathcal A \simeq \Gamma(\mathcal B,\bar p)$; if there is a formula $\phi(\bar x)$ in the language of $\mathcal B$ such that $\mathcal A \simeq \Gamma(\mathcal B,\bar p)$ for every $\bar p$ satisfying $\phi(\bar x) \in \mathcal B$, then we say that $\mathcal A$ is \emph{regularly interpretable} in $\mathcal B$ and write $\mathcal A \simeq \Gamma(\mathcal B,\phi)$.
However, showing regular interpretability (regular bi-interpretability) requires one to go deeper into model theoretic properties of the structures.   A bi-interpretability (regular bi-interpretability) of $\mathcal A$ and $\mathcal B$ is a very strong version of a mutual interpretability (regular interpretability) of $\mathcal A$ and $\mathcal B$ in each other (see Section \ref{sec:sln}).   The crucial point in using interpretability in first-order classification problems can be seen as follows: if $\mathcal A \simeq \Gamma(\mathcal B,\phi)$ and $\mathcal B_1 \equiv \mathcal B$ then for $\mathcal A_1 \simeq \Gamma(\mathcal B_1,\phi)$ one has $\mathcal A_1 \equiv \mathcal A$. Thus, regular interpretability $\mathcal A \simeq \Gamma(\mathcal B,\phi)$ gives some models $\mathcal A_1$ of the theory $Th(\mathcal A)$, but, in general, not all the models. To get all models of $Th(\mathcal A)$ in the form $\mathcal A_1 \simeq \Gamma(\mathcal B_1,\phi)$, where $\mathcal B_1 \equiv \mathcal B$ one needs regular bi-interpretability (or at least regular invertible interpretability) of $\mathcal A$ and $\mathcal B$ in each other. By construction, interpretability $\mathcal A \simeq \Gamma(\mathcal B,\phi)$ gives an algebraic description of the structure $\mathcal A$ through the structure $\mathcal B$. The efficacy of the description depends on the interpretation $\Gamma$ and how well one knows the models $\mathcal B_1$ of $Th(\mathcal B)$.

Let $G \in \{SL_n(F), GL_n(F), T_n(F)\}$ or just an algebraic group of $F$-points over a field $F$.  With $G$ we associate an algebraic structure $C(G)$, \emph{the core of $G$} which satisfy the following conditions:
\begin{enumerate}
\item [1)] $C(G)$ determines the essential algebraic properties of $G$;
\item [2)] $C(G)$ is regularly interpretable in $G$, so $H \equiv G$ implies $C(H) \equiv C(G)$;
\item [3)] $C(G)$ is bi-interpretable (or invertible interpretable) with $F$, i.e.,  $C(G) \simeq \Gamma(F,\phi)$. Hence 
$$
C(H) \equiv C(G) \Longleftrightarrow C(H) \simeq \Gamma(L,\phi) \ for \ some \ L \equiv F.
$$

\end{enumerate}

Now if $H \equiv G$ then by 2) $C(H) \equiv C(G)$,  hence by 3) $C(H)$ has the same structure over some field $L$, as $C(G)$ over the field $F$, where $L \equiv F$. This gives the algebraic structure of the core $C(H)$ over $L$. Knowing the core $C(H)$ one deduces the algebraic structure of $H$. This description of $H$ depends on how well the core $C(G)$ ($C(H)$) reflects the structure of $G$ ($H$). 
Our main results below explain how this approach works in particular cases when $G \in \{SL_n(F), GL_n(F), T_n(F)\}$.

The structure of the paper is as follows.  In Section~\ref{prelim:sec} we collect all the basic notation and results on matrix groups.  In Section~\ref{se:GLn}, we recall some results from \cite{MS3} which show that many important subgroups of the classical matrix groups $GL_n(R)$ and $PGL_n(R)$ are Diophantine, that is, definable by Diophantine formulas. These are the basis for our bi-interpretability results. In passing, we review the principal results on the decidability of Diophantine problems in the classical matrix groups $G$, which generally state that Diophantine problems in $G$ and $F$ are P-time equivalent. Notice that recently similar results were obtained for Chevalley groups over arbitrary rings \cite{BMP}, namely, if $\Phi$ is an irreducible root system of rank $> 1$, $R$ is an arbitrary commutative ring with~$1$, then  the Diophantine problem in any Chevalley group
$G_\pi(\Phi,R)$ is Karp equivalent to
the Diophantine problem in~$R$. Surprisingly, this type of reduction of Diophantine problems in groups to Diophantine problems in rings occurs not only in algebraic groups, where it is of course very natural, but also in various classes of discrete groups. Thus,  in \cite{GMO} it was shown that the Diophantine problem in wide classes of finitely generated solvable groups $G$ is closely related to the Diophantine problem in the corresponding rings of algebraic integers. Furthermore, a similar approach works for various non-commutative $R$-algebras $A$, in which case the Diophantine problem in $A$ is equivalent to the Diophantine problem in a ring $R$ or in a ring of polynomials $R[X]$, or in some other commutative ring associated with $A$ (see, for example, \cite{M87,GMO_rings,GMO_comm,KM2,KM3}). 

The rest of the paper is devoted to proving the main results of this paper, which we describe without technical details in the following three subsections.

\subsection{Groups first-order equivalent to $SL_n(F)$}

Let $G = SL_n(F)$. In this case, the core of  $SL_n(F)$ is  $SL_n(F)$ itself, so conditions 1) and 2) above on the core $C(G)$ obviously hold, while condition 3) follows from the result below. 

\medskip
\noindent
{\bf Theorem A1}
 {\it Let  $F$ be a field and $n \geq 3$. Then the group $SL_n(F)$ is regularly bi-interpretable with $F$. }
 
  \medskip
Based on this, we describe all groups first-order equivalent to $SL_n(F)$.

\medskip\noindent
{\bf Theorem A2} {\it 
 Let $F$ be a field and $n\geq 3$. Then for a group $H$ the following conditions are equivalent:
 \begin{itemize}
     \item [1)] $H \equiv SL_n(F)$;
     \item [2)] $H \simeq SL_n(L)$ for some field $L$ such that $F \equiv L$.
      \end{itemize}
}

\subsection{Groups first-order equivalent to $T_n(F)$}

Let $G =  T_n(F)$.  In this case, we define the core $C(T_n(F))$ as the quotient $P\TF=\TF/Z(\TF)$. Clearly,  $P\TF$ is absolutely interpretable in $T_n(F)$. 

The following is condition 3) on the core $C(T_n(F))$. 

\medskip
{\bf Lemma \ref{tnmainbiinter2:lem}.} {\it The quotient group $P\TF=\TF/Z(\TF)$ is regularly bi-interpretable with $F$.}

\medskip 
As a corollary we get the following crucial technical result.

\medskip
{\bf Corollary  \ref{TRF:cor}.} {\it  If $H$ is a group and $H\equiv \TF$, then 
$$H/Z(H) \cong PT_n(L)$$ for some field $L\equiv F$.
}

\medskip
The following is the main structural theorem for groups of type $T_n$.

\medskip

{\bf Theorem A3.} {\it Assume that $F$ is an infinite field of characteristic $\neq 2$. If $H$ is any group $H\equiv \TF$, then $H\cong T_n(L,f,Z)$ for some field $L\equiv F$, symmetric 2-cocycle $f\in S^2(B_n(L),Z)$ and an abelian group $Z=Z(H)\equiv Z(\TF) \cong \FM$.}

\medskip
In the statement, $\FM$ denotes the multiplicative group of units in the field $F$, $B_n(L)\cong (L^\times)^{n-1}$, and $Z(H)$ is the center of the group $H$. The group $T_n(L,f,Z)$ is the so-called abelian deformation of $T_n(L)$. The maximal torus $D_n(L)$ consisting of all diagonal matrices in $T_n(L)\cong (L^\times)^n$ has been deformed into an abelian extension of $Z=Z(H)$ by $B_n(L)$, while the action of the deformed torus on the unipotent radical $UT_n(L)$ has remained ``the same".

The proof of this theorem is given in Section~\ref{TO:sec}. In that section, we provide simpler characterizations of the groups $H$ of Theorem A3 in the cases where $F$ is a real closed field or an algebraically closed field (see Theorem~\ref{thm:Real-Tn}). If $F$ is a number field or a similar type of field and there are some restrictions on the type of 2-cocycles, then we prove that the description in Theorem A3 is also sufficient (see Theorem~\ref{thm:sufficiency-Tn}). Finally, we prove the existence of a group $H$ elementarily equivalent to $T_n(\Q)$, where $\Q$ is the field of rationals, but the group $H$ is not of type $T_n$ (see Theorem~\ref{elemnotiso-Q:thm}).

\subsection{Groups first-order equivalent to $GL_n(F)$}

Let $G = GL_n(F)$. In this case, we put $C(G) = SL_n(F) = G^\prime$.  
Note that there is a natural number $w=w(n)$ (the width of the commutator of $G^\prime$) that depends only on $n$ and such that every element of $G^\prime= SL_n(F)$ is a product of at most $w$ transvections  (see Sections  \ref{subsec:2.1} and \ref{subsec:2.2}), so it is a product of $w$ commutators in $G$. It follows that $SL_n(F)$ is absolutely and uniformly (i.e., the defining formula does not depend on $F$) definable in $G$. This addresses conditions 2) and 3) of the definition of the core.

Observe that 
$$G= \GLF\cong \SLF \rtimes d_1(F^\times),$$
where $F^\times$ is the multiplicative group of $F$, and $d_1(F^\times) = \{diag(\alpha,1,\ldots,1) \mid \alpha \in F^\times\}$. 
 Moreover, 
 $$G/ (G'\cdot Z(G)) \cong F^\times / (F^\times)^n.$$ 
 
 This allows us to describe the algebraic structure of groups $H$ such that $H\equiv \GLF$.  We are not going to provide the main technical statements here in the introduction. However, a simpler, yet crucial, result is as follows.

{\bf Theorem A4.} {\it 
   If $H$ is any group such that $H\equiv \GLF$, then $H$ fits into the short exact sequence:
$$1\rightarrow H' \cdot Z(H) \rightarrow H \rightarrow \frac{L^\times}{(L^\times)^n} \rightarrow 1$$
where $H' \cong \SL_n(L)$, $L\equiv F$, as fields and $Z(H) \equiv F^\times$.}  

The proof of Theorem A4 and the rest of the main results on $GL_n$ are given in Section~\ref{GLO:sec}. Similarly to the case of $T_n$,  there are much simpler descriptions of the group $H$ from Theorem A4 when $F$ is an algebraically closed field or a real closed field (see Theorems~\ref{thm:comp-gln} and \ref{thm:realclosed-gln}). The most technical case belongs to number fields and similar fields, called NFT-fields (see Theorem~\ref{thm:NFT-gln}). All these more precise results are improvements of Theorem A4 for particular fields.  Finally, in Section~\ref{sec:elemnotiso-gln} we prove the existence of a group that is first-order equivalent to $GL_n(\Q)$ but not of the type $GL_n$.

\section{Preliminaries on classical matrix groups }
\label{prelim:sec}

In this section, we fix some notation and recall technical results that are used throughout the paper.

For a group $G$ and $x, y \in G$ we denote by $x^y$ the conjugate $y^{-1}xy$ of $x$ by $y$, and by $[x,y]$ the commutator $x^{-1}y^{-1}xy$. For a subset $A \subseteq G$ by $C_G(A)$ we denote the centralizer $\{x \in G \mid \forall a \in A ~([x,a] = 1) \}$, in particular,  $Z(G)=\{x\in G\mid \forall y\in G~ ([x,y] = 1) \}$ is the center of $G$. For the subsets $X, Y \subseteq G$ by $[X,Y]$, we denote the subgroup of $G$ generated by all commutators $[x,y]$, where $x \in X, y \in Y$. Then $[G,G]$ is the derived subgroup $G'$ of $G$ (the commutant of $G$).  The lower central series of $G$ is defined as $G = \gamma_1(G) \geq \gamma_2(G) \geq \ldots$, where $\gamma_{i+1}(G) = [G,\gamma_i(G)]$.

 In the rest of the paper by $R$ we denote an arbitrary associative ring with identity 1.    By $\RM$ we denote the multiplicative group of invertible (unit) elements of $R$ and by $R^+$  the additive group of $R$.

Now we define the groups we study in this paper and list some of their properties that we use often.

\subsection{$GL_n(R)$}
\label{subsec:2.1}

Fix $n \in \mbb{N}$. By $GL_n(R)$ we denote the group of all invertible $n \times n$ matrices over an associative unitary ring $R$.

Let $e_{ij}$ be an $n \times n$ matrix where the $(i,j)$ entry is $1$ and all other entries are $0$. For $1 \leq i \neq j \leq n$, the matrix $t_{ij}(\alpha)=I_n+\alpha e_{ij}$, where $\alpha\in R$ and $I_n$ is the $n\times n$ identity matrix, is called a \emph{transvection}. Sometimes we denote transvection $t_{ij}(1)$ simply by $t_{ij}$. Put $\mathcal{T}_n = \{t_{ij} \mid 1 \leq i \neq j \leq n\}$. 

Transvections $t_{ij}(\alpha), t_{kl}(\beta)$, for $\alpha, \beta \in R$, satisfy the following well-known (Steinberg) relations:
 \begin{enumerate}
 \item [1)] $t_{ij}(\alpha)t_{ij}(\beta)=t_{ij}(\alpha+\beta).$
 \item [2)] $[t_{ik}(\alpha),t_{kl}(\beta)] = t_{il}(\alpha\beta)$,  for $i\neq l$.
 \item [3)] $[t_{ik}(\alpha),t_{jl}(\beta)] =1$ for $i \neq l, j \neq k$.
  \end{enumerate} 
Observe that 2) implies that $[t_{ij}(\alpha),t_{ki}(\beta)] = t_{kj}(-\alpha\beta)$,  for $j\neq k$.

Let $diag(\alpha_1, \ldots ,\alpha_n)$ be the $n\times n$ diagonal matrix  with $(i,i)$-entry $\alpha_i\in \RM$. Then $diag(\alpha_1, \ldots ,\alpha_n) \in GL_n(R)$ and the set of all such matrices forms a subgroup $D_n(R)$ of $GL_n(R)$. Note that for any $\alpha_1, \ldots, \alpha_n \in \RM$, $\beta \in R$ the following holds:
\begin{equation} \label{eq:d-t}
 diag(\alpha_1, \ldots,\alpha_n)^{-1}t_{ij}(\beta)diag(\alpha_1, \ldots,\alpha_n) = t_{ij}(\alpha_i^{-1} \beta \alpha_j).
 \end{equation}
 In particular,
 \begin{equation} \label{eq:d-t-2}
 [t_{ij}(\beta),diag(\alpha_1, \ldots,\alpha_n)] = t_{ij}(\alpha_i^{-1} \beta \alpha_j -\beta).
 \end{equation}

 By $d(\alpha)$ we denote the scalar matrix $diag(\alpha, \ldots ,\alpha) = \alpha I_n$, where $\alpha \in R^\times$. The set of all scalar matrices forms a subgroup $R^\times I_n \leq D_n(R)$ which is isomorphic to $R^\times$.  It follows from (\ref{eq:d-t-2}) that the subgroup $Z_n(R)$ of $R^\times I_n$, which consists of all scalar matrices $d(\alpha)$, where $\alpha$ is in the center of the group $R^\times$, forms the center of the groups $R^\times I_n$, $D_n(R)$, as well as the group $GL_n(R)$.

 Now consider the following diagonal matrices for $\alpha \in R^\times$:
 $$d_i(\alpha)\stackrel{\text{def}}{=} diag(1,\ldots ,  \underbrace{\alpha}_{i'\text{th}}, \ldots, 1).$$
 
 It is known (see, for example \cite{KM}) that if $R$ is a field then there are  natural numbers  $r$ and $s$, which depend only on $n$,  such that  every element $g \in GL_n(R)$ can be presented as a product of the type 
 \begin{equation} \label{eq:decomp}
 g= x_1 \ldots x_r d_n(\beta)y_1 \dots y_s
 \end{equation}
where  $x_i,y_j$ are transvections and $\beta \in R^\times$.

\subsection{$SL_n(R)$ and $E_n(R)$}
\label{subsec:2.2}

Denote by $E_n(R)$ the subgroup of $GL_n(R)$ generated by all transvections, i.e., $
E_n(R) = \langle t_{ij}(\alpha) \mid \alpha \in R , 1\leq i\neq j \leq n\rangle $. 

\begin{defn}
We say that a subgroup $G \leq GL_n(R)$ is \emph{large} if $G$ contains $E_n(R)$. 
\end{defn}

Throughout the paper, we assume $n\geq 3$.

If the ring $R$ is commutative, then, as usual,  the group $SL_n(R)$ consists of all matrices of $GL_n(R)$ with determinant 1. One can define $SL_n(R)$ for arbitrary division rings (using the Dieudonne determinant), but we do not consider such groups here. In every case, when we mention a group $SL_n(R)$ we assume that $R$ is commutative. 
Clearly,  $E_n(R)$ is a subgroup of $SL_n(R)$, so $SL_n(R)$ is a large subgroup of $GL_n(R)$. If $R$ is a field or Euclidean domain, then $SL_n(R) = E_n(R)$, but in general, this is not the case. For the fields $R$ we have
$$
[GL_n(R),GL_n(R)] = SL_n(R).
$$
Now (\ref{eq:decomp}) implies that 
$$
GL_n(R)  \simeq  SL_n(R) \rtimes d_n(R^\times)  \simeq GL_n(R)' \rtimes R^\times.
$$
Note also that in this case
$$
[SL_n(R),SL_n(R)] = SL_n(R).
$$

Following \cite{CK} we say that $SL_n(R)$ has \emph{bounded elementary  generation} if there is a natural number $w$ such that every element of $SL_n(R)$ is a product of at most $w$ transvections. Order all pairs of indices $(i,j), i,j = 1, \ldots,n$ into a sequence $\sigma$ in some arbitrary but fixed way, say $\sigma = (1,1), \ldots, (n,n)$. Repeat this sequence consequently $w$ times, obtaining a new sequence $\sigma^* = \sigma,\sigma, \ldots, \sigma  = (i_1,j_1), \ldots,(i_m,j_m)$, where $m = wn^2$. Then every element $g \in SL_n(R)$ can be decomposed into a product
$$
g = t_{i_1j_1}(\alpha_1) \ldots t_{i_mj_m}(\alpha_m)
$$
for some $\alpha_1, \ldots,\alpha_m \in R$ (note that we allow here elements $\alpha_i = 0$), which is uniform in order of transvections.

For $ 1 \leq i \neq j \leq n$ denote by $T_{ij}$ the one-parametric subgroup $\{t_{ij}(\alpha) \mid \alpha \in R\}$. Then the bounded elementary generation of $SL_n(R)$ is equivalent to the statement that there is sequence of pairs $(i_1,j_1), \ldots, (i_m,j_m)$, where $1\leq i_k \neq j_k \leq n$ such that 
$$
SL_n(R) = T_{i_1j_1} \ldots T_{i_mj_m}.
$$
If $R$ is a field, then formula (\ref{eq:decomp}) implies that $SL_n(R)$ has bounded elementary generation. A much more difficult argument shows that $SL_n(\mathcal{O})$ over a ring of algebraic integers $\mathcal{O}$ has bounded elementary generation \cite{CK}. However, this is not the case for even arbitrary domains. In fact, it was shown in \cite{Kallen, DV} that if $F$ is a field of infinite transcendence  degree over its
prime subfield (for example: $F= \mathbb{C}$) then for every number $c$  there is a matrix in the group $SL_n(F[x])$ that
cannot be written as a product of $c$ commutators.

\subsection{Unitriangular groups $UT_n(R)$}
\label{se:Unitriangular}

  Transvections $t_{ij}(\alpha)$, where $1\leq i<j\leq n$, $\alpha\in R$, generate the subgroup $UT_n(R)$ of all (upper) unitriangular matrices in $GL_n(R)$.

 For $m= 1, \ldots, n$ denote by $UT^m(n,R)$ the subgroup of $UT_n(R)$ consisting of all matrices with $m-1$ zero diagonals above the main one.
 Then
 \begin{equation} \label{eq:UTlowercentral}
 UT_n(R) = UT_n^1(R) > UT_n^2(R) > \ldots > UT_n^n(R) = 1.
 \end{equation}
 Furthermore, for any positive $r, s \in \mathbb{N}$ one has
 $$
 [UT_n^r(R),UT_n^s(R)] = UT_n^{r+s}(R).
 $$
 This implies that the series (\ref{eq:UTlowercentral}) is the lower central series of $UT_n(R)$, in particular, $\gamma_k(UT_n(R)) = UT_n^k(R)$.
 Direct computations show that any element $g \in UT_n^m(R)$ can be uniquely written  as a product of the following type:
 $$
 g = t_{n-m,n}(\alpha_{n-m}) t_{n-m-1,n-1}(\alpha_{n-m-1}) \ldots t_{1,1+m}(\alpha_1)h,
 $$
 where $\alpha_i \in R$ and $h \in UT_n^{m+1}(R)$.
 Therefore, 
 
 \begin{equation}\label{eq:UT-m-n}
 UT_n^m(R) = T_{n-m,n} T_{n-m-1,n-1} \ldots T_{1,1+m}UT_n^{m+1}(R)
 \end{equation}
 This implies that $UT_n(R)$ is a finite product of one parametric subgroups $T_{ij}$.

 \subsection{Triangular groups $T_n(R)$ }

 Recall that $T_n(R)$ consists of all upper triangular matrices $x = (x_{ij})$ over $R$ with units on the main diagonal, that is, $x_{ij} = 0$ for $i>j$, $x_{ij} \in R$ for $i <j$, and $x_{ii} \in R^\times$ for $1\leq i \leq n$. Clearly, $UT_n(R) \leq T_n(R)$.

 Note that any matrix $x \in T_n(R)$ can be represented as a product $x= d_x u_x$, where $d_x = diag(x_{11}, \ldots,x_{nn})$, and $u_x \in UT_n(R)$, where $u_x = (y_{ij})$, with $y_{ij} = x_{ii}^{-1}x_{ij}$ for $i<j$. 
 Therefore, $T_n(R) = D_n(R) UT_n(R)$ and $D_n(R) \cap UT_n(R) = 1$. Furthermore, $UT_n(R)$ is a normal subgroup in $T_n(R)$, see (\ref{eq:d-t}), hence
 $$
 T_n(R) \simeq UT_n(R) \rtimes D_n(R).
 $$
 
 \begin{defn}
 We call $G \leq T_n(R)$  a large subgroup of $T_n(R)$ if $G$ contains $UT_n(R)$. 
 \end{defn}
 
Obviously, $UT_n(R)$ is a large subgroup of $T_n(R)$, but it is not a large subgroup of $GL_n(R)$.

\subsection{Groups $PGL_n(R)$ and $PSL_n(R)$}

The general and special projective linear groups $PGL_n(R)$ and $PSL_n(R)$ are defined as quotients $PGL_n(R) = GL_n(R)/Z(GL_n(R))$ and $PSL_n(R) = SL_n(R)/Z(SL_n(R))$ of the groups by their centers.
Note that $Z(GL_n(R))$ is the subgroup $Z_n(R)$ of all scalar matrices $d(\alpha)$, where $\alpha$ belongs to the center of the group $R^\times$. Respectively, $Z(SL_n(R)) = Z_n(R) \cap SL_n(R)$. 

\begin{defn}
Let $\phi: GL_n(R)\to PGL_n(R)$ be the canonical homomorphism. We call a subgroup $G$ of $PGL_n(R)$ \emph{large} if it contains $\phi(E_n(R))$.
\end{defn}

\section{Diophantine structure in large subgroups of classical matrix groups}
 \label{se:GLn}
 
 In this section, we recall some results from \cite{MS3} which show that many important subgroups of the classical matrix groups $GL_n(R)$ and $PGL_n(R)$  are Diophantine. 
 We freely use the notation from Preliminaries.

\label{se:GLn_Tij}
We start with the following key technical result of \cite{MS3}.

\begin{thm} \cite{MS3} \label{le:Tkl}
Let $G$ be a large subgroup of $GL_n(R)$, $n \geq 3$. Then for any $1\leq k \neq m \leq n$  the subgroup $T_{km}$ is Diophantine in $G$ (defined with parameters $\T_n$). 
\end{thm}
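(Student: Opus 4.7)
The strategy is to realize $T_{km}$ as the image of a Diophantine-definable set under commutation with a single fixed transvection. Since $n\geq 3$, fix an index $l\in\{1,\ldots,n\}\setminus\{k,m\}$. The Steinberg identity (relation~2 of Section~\ref{subsec:2.1}) gives $[t_{kl}(\alpha),t_{lm}(1)]=t_{km}(\alpha)$ for every $\alpha\in R$, so $T_{km}=\{[y,t_{lm}(1)]:y\in T_{kl}\}$. Moreover, any scalar matrix $\lambda I$ is central in $\GLR$ and hence $[zy,t_{lm}(1)]=[y,t_{lm}(1)]$ whenever $z$ is central; therefore
$$T_{km}=\{[y,t_{lm}(1)]:y\in (\RM I)\cdot T_{kl}\}.$$
It thus suffices to exhibit a Diophantine definition of $(\RM I\cap G)\cdot T_{kl}$ inside $G$ using parameters from $\T_n$.

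\textbf{Step 1: a centralizer computation.} Put
$$A_{kl}:=\{\,t_{ij}(1)\in\T_n\ :\ i\neq j,\ i\neq l,\ j\neq k\,\},$$
a finite subset of $\T_n$. A direct computation comparing $xe_{ij}$ and $e_{ij}x$ shows that $x\in\GLR$ commutes with $t_{ij}(1)$ iff column $i$ of $x$ vanishes off the diagonal, row $j$ of $x$ vanishes off the diagonal, and $x_{ii}=x_{jj}$. Intersecting these conditions over all $(i,j)\in A_{kl}$ forces every off-diagonal entry of $x$ to vanish except possibly $x_{kl}$; and the diagonal-equality conditions $x_{ii}=x_{jj}$ coming from the pairs $(r,l)\in A_{kl}$ (valid for any $r\neq l$, available since $n\geq 3$) chain together to force all diagonal entries of $x$ to coincide. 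Hence $x=\lambda I+\mu e_{kl}=(\lambda I)\cdot t_{kl}(\lambda^{-1}\mu)$ for some $\lambda\in\RM$ and $\mu\in R$, and conversely every such matrix centralizes $A_{kl}$. Thus $C_{\GLR}(A_{kl})=(\RM I)\cdot T_{kl}$, and since $T_{kl}\leq E_n(R)\leq G$ by largeness of $G$, intersecting with $G$ yields $C_G(A_{kl})=(\RM I\cap G)\cdot T_{kl}$.

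\textbf{Step 2: the Diophantine formula.} Every $y\in C_G(A_{kl})$ factors as $y=z\cdot t_{kl}(\alpha)$ with $z\in\RM I\cap G$ and $\alpha\in R$; the centrality of $z$ gives $[y,t_{lm}(1)]=[t_{kl}(\alpha),t_{lm}(1)]=t_{km}(\alpha)$. Combining with Step~1, the existential-positive formula
$$\psi_{km}(x)\ :=\ \exists y\,\Bigl(\,x=y^{-1}t_{lm}^{-1}\,y\,t_{lm}\ \wedge\ \bigwedge_{a\in A_{kl}}ya=ay\,\Bigr),$$
whose parameters are drawn from $\T_n$, defines exactly $T_{km}$ in $G$ and is Diophantine, as required.

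\textbf{Main obstacle.} The heart of the argument is Step~1: choosing $A_{kl}$ rich enough to collapse the centralizer to $(\RM I)\cdot T_{kl}$ while still keeping $A_{kl}\subseteq\T_n$. The single-transvection centralizer analysis is routine linear algebra, but verifying that all diagonal entries are chained together and that no off-diagonal entry outside $(k,l)$ can survive uses the hypothesis $n\geq 3$ essentially: it guarantees that for every $i\neq l$ there exists some $j\notin\{k,i\}$ producing a column-$i$ constraint, and symmetrically for rows. A secondary point is that one must pass from the centralizer in $\GLR$ to the centralizer in the possibly proper subgroup $G$, which is transparent here because both $T_{kl}$ and $t_{lm}$ already lie in $E_n(R)\leq G$.
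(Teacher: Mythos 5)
The paper cites \cite{MS3} for this theorem and does not reproduce the argument, so there is no in-paper proof against which to compare your route directly. That said, your proof is essentially correct and of exactly the kind one expects (centralizers of tuples of transvections cut out the one-parameter subgroups, then a fixed commutator projects away the scalar part), so it should match the spirit of \cite{MS3}. Step~1 is fully valid over an arbitrary associative ring $R$: the entrywise commutation analysis with each $t_{ij}(1)$ and the chaining of diagonal equalities via the pairs $(r,l)$ together with the invertibility argument for $\lambda$ correctly give $C_G(A_{kl})=(\RM I\cap G)\cdot T_{kl}$, and the resulting formula $\psi_{km}$ is Diophantine with parameters in $\T_n$.

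The one inaccuracy is in Step~2: you assert that every scalar matrix $\lambda I$ with $\lambda\in\RM$ is central in $\GLR$. Over a non-commutative $R$ this is false; as the paper notes in Section~\ref{subsec:2.1}, $Z(\GLR)$ consists only of $\lambda I$ with $\lambda$ in the center of $\RM$, and equation~\eqref{eq:d-t} shows $(\lambda I)^{-1}t_{ij}(\beta)(\lambda I)=t_{ij}(\lambda^{-1}\beta\lambda)$, which is not $t_{ij}(\beta)$ in general. Fortunately, all your argument actually uses is that $\lambda I$ commutes with the specific element $t_{lm}(1)$, and this \emph{is} true for every unit $\lambda$ because the parameter $1$ is central in $R$, so $(\lambda I)^{-1}t_{lm}(1)(\lambda I)=t_{lm}(\lambda^{-1}\lambda)=t_{lm}(1)$. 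With ``centrality of $z$'' replaced by ``$z=\lambda I$ commutes with $t_{lm}(1)$ since $\lambda^{-1}\cdot 1\cdot\lambda=1$'', the computation $[zy,t_{lm}]=[y,t_{lm}]=[t_{kl}(\alpha),t_{lm}]=t_{km}(\alpha)$ goes through verbatim, and the proof is complete for arbitrary associative $R$, as the theorem requires.
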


\begin{corollary}\cite{MS3}
Let $G =  GL_n(R)$, $n \geq 3$. Then for any $1\leq k \neq m \leq n$  the subgroup $T_{km}$ is Diophantine in $G$ (defined with parameters $\T_n$). 
\end{corollary}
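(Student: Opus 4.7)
The plan is extremely short, since the corollary is essentially an immediate specialization of Theorem~\ref{le:Tkl}. The whole content is to observe that $GL_n(R)$ itself qualifies as a large subgroup of $GL_n(R)$ in the sense of the definition given in Section~\ref{subsec:2.2}, namely a subgroup of $GL_n(R)$ containing $E_n(R)$. Since $E_n(R) = \langle t_{ij}(\alpha) \mid \alpha \in R,\, 1\leq i \neq j \leq n\rangle$ is by construction a subgroup of $GL_n(R)$, the inclusion $E_n(R) \leq GL_n(R)$ is trivial and $GL_n(R)$ is a large subgroup of itself.

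With this observation in hand, I would simply apply Theorem~\ref{le:Tkl} to $G = GL_n(R)$, together with the standing hypothesis $n \geq 3$, and read off the conclusion: for each pair $1 \leq k \neq m \leq n$, the one-parameter subgroup $T_{km}$ is Diophantine in $GL_n(R)$ with parameters from $\T_n = \{t_{ij} \mid 1 \leq i \neq j \leq n\}$. The Diophantine defining formula with parameters $\T_n$ produced by Theorem~\ref{le:Tkl} is the same formula used for a general large subgroup; no modification is required.

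There is no real obstacle, and I do not expect any delicate step. The only thing worth double-checking is that the set of parameters $\T_n$ used in the general theorem consists of elements that actually lie in the specific subgroup $G = GL_n(R)$ under consideration, which is clear because each transvection $t_{ij} = I_n + e_{ij}$ is an invertible matrix and hence belongs to $GL_n(R)$. Thus the corollary follows in one line from the theorem.
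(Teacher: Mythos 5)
Your proposal is correct and is exactly the intended one-line specialization: $GL_n(R)$ trivially contains $E_n(R)$, hence is large in itself, and Theorem~\ref{le:Tkl} applies verbatim. The paper states this as an immediate corollary without further proof, which matches your reasoning.
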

 
\begin{corollary} \cite{MS3}
Let $R$ be a commutative ring and $G =  SL_n(R)$, $n \geq 3$. Then for any $1\leq k \neq m \leq n$  the subgroup $T_{km}$ is Diophantine in $G$  (defined with parameters $\T_n$). 
\end{corollary}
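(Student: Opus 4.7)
The plan is to reduce the statement immediately to Theorem~\ref{le:Tkl} by checking that $SL_n(R)$ is itself a large subgroup of $GL_n(R)$. Concretely, each transvection $t_{ij}(\alpha) = I_n + \alpha e_{ij}$ with $i \neq j$ is a unitriangular (after row/column permutation) matrix whose determinant is $1$; hence $E_n(R) = \langle t_{ij}(\alpha) \mid \alpha \in R,\; 1 \leq i \neq j \leq n\rangle$ is contained in $SL_n(R)$. By the definition of a large subgroup of $GL_n(R)$ recalled in Section~\ref{subsec:2.2}, this is exactly what it means for $SL_n(R)$ to be large.

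With that observation in hand, I would simply invoke Theorem~\ref{le:Tkl} applied to $G = SL_n(R)$: the theorem produces, for each ordered pair $1 \leq k \neq m \leq n$, a Diophantine formula $\psi_{km}(x; \bar{p})$ in the language of groups, with parameters from the fixed tuple $\mathcal{T}_n$ of transvections, such that for any large subgroup $G \leq GL_n(R)$ the formula $\psi_{km}(x; \mathcal{T}_n)$ defines $T_{km} \cap G$ in $G$. Since $T_{km} \subseteq E_n(R) \subseteq SL_n(R)$, we have $T_{km} \cap SL_n(R) = T_{km}$, and so $\psi_{km}(x; \mathcal{T}_n)$ defines $T_{km}$ inside $SL_n(R)$ as required.

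There is essentially no obstacle here: all the work is in Theorem~\ref{le:Tkl}, and the corollary is a one-line verification that the hypothesis of that theorem applies. The only point worth underlining is the hypothesis that $R$ is commutative, which is what makes $SL_n(R)$ well defined via the ordinary determinant and in particular ensures that each transvection sits in $SL_n(R)$, so the parameters $\mathcal{T}_n$ are legitimately elements of the group $G$ in which we are defining $T_{km}$.
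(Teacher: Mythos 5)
Your proposal is correct and is exactly the paper's intended route: Section~\ref{subsec:2.2} already records that $E_n(R) \leq SL_n(R)$, so $SL_n(R)$ is a large subgroup of $GL_n(R)$, and the corollary then follows immediately from Theorem~\ref{le:Tkl}. The only cosmetic slip is your phrasing ``$T_{km} \cap G$''; the theorem asserts $T_{km}$ itself is Diophantine in $G$, which is the same thing here since $T_{km} \subseteq E_n(R) \subseteq G$ for any large $G$.
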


\label{se:UT_n}

\begin{proposition} [\cite{MS3}] \label{le:UT(n,R)}
Let $G$ be a large subgroup of $GL_n(R)$, $n \geq 3$.
Then for every $1\leq m\leq n-1$ the subgroup $UT_n^m(R)$ is Diophantine in $G$. In particular, $UT_n(R)$ is Diophantine in $G$.
\end{proposition}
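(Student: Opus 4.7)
The plan is a downward induction on $m$ that exploits the decomposition formula~(\ref{eq:UT-m-n}). The base case is $m=n$, where $UT_n^n(R) = \{1\}$ is trivially Diophantine in $G$ (defined by $x = 1$). For the inductive step, assume $UT_n^{m+1}(R)$ is Diophantine in $G$ with parameters from $\mathcal{T}_n$. Theorem~\ref{le:Tkl} provides, with the same parameter tuple $\mathcal{T}_n$, Diophantine definitions of each of the subgroups $T_{n-m,n},\, T_{n-m-1,n-1},\, \ldots,\, T_{1,1+m}$ appearing in the right-hand side of~(\ref{eq:UT-m-n}).

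The only further ingredient is the routine observation that in any structure the class of Diophantine subsets is closed under finite products: if subsets $A_1,\ldots,A_k \subseteq G$ are defined by existential formulas $\phi_i(x;\bar p)$ over a common parameter tuple $\bar p$, then
\[
A_1 A_2 \cdots A_k \;=\; \Bigl\{\, y_1 y_2 \cdots y_k \;\bigm|\; y_i \in A_i \Bigr\}
\]
is defined by the existential formula
\[
\exists y_1 \cdots \exists y_k \Bigl( x = y_1 y_2 \cdots y_k \,\wedge\, \bigwedge_{i=1}^{k} \phi_i(y_i;\bar p) \Bigr).
\]
Applying this to~(\ref{eq:UT-m-n}) together with the inductive hypothesis and Theorem~\ref{le:Tkl} gives a Diophantine definition of $UT_n^m(R)$ in $G$ with parameters $\mathcal{T}_n$, completing the induction. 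Specializing to $m = 1$ yields the final assertion that $UT_n(R) = UT_n^1(R)$ is Diophantine in $G$.

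There is no genuine obstacle here; the content sits entirely in the previously established Theorem~\ref{le:Tkl}. The present proposition is essentially a structural corollary: once the one-parameter subgroups $T_{ij}$ above the diagonal are known to be Diophantine in the large subgroup $G$, the normal-form identity~(\ref{eq:UT-m-n}) reduces the definability of each term in the lower central series of $UT_n(R)$ to a finite existential combination of those subgroups, and the induction terminates after $n$ steps.
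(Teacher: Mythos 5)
Your proof is correct and follows the natural route the paper sets up: downward induction on $m$, using the product decomposition~(\ref{eq:UT-m-n}) together with Theorem~\ref{le:Tkl} and the closure of Diophantine sets under finite products. The paper cites this proposition to~\cite{MS3} without reproducing a proof, but the surrounding material (the statement of~(\ref{eq:UT-m-n}) and Theorem~\ref{le:Tkl} with parameters $\T_n$) makes clear this is exactly the intended argument, and your version fills it in accurately.
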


\begin{lemma} [\cite{MS3}]
Let $G$ be a large subgroup of $GL_n(R)$, $n \geq 3$. Then the set $R_G$ of all scalar matrices from $G$ is Diophantine in $G$.
\end{lemma}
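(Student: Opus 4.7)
The plan is to exhibit $R_G$ as the set of elements of $G$ that commute with each of the finitely many transvections $t_{ij} \in \mathcal{T}_n$. Since $G$ is large, $\mathcal{T}_n \subseteq E_n(R) \subseteq G$, so the $t_{ij}$ are available as parameters in $G$. The candidate defining formula is then the quantifier-free positive formula
\[
\varphi(x) \;=\; \bigwedge_{1 \leq i \neq j \leq n} [x, t_{ij}] = 1,
\]
which is manifestly Diophantine. So the whole lemma reduces to checking that $\varphi$ picks out exactly the scalar matrices sitting inside $G$.

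One inclusion is an immediate consequence of equation~(\ref{eq:d-t-2}): setting $\alpha_1 = \cdots = \alpha_n = \alpha$ and $\beta = 1$ gives $[t_{ij}, d(\alpha)] = t_{ij}(\alpha^{-1}\cdot 1 \cdot \alpha - 1) = t_{ij}(0) = 1$, so every scalar matrix satisfies $\varphi$. The only real work is the reverse inclusion: if $x = (x_{kl}) \in GL_n(R)$ satisfies $x t_{ij} = t_{ij} x$ for all $1 \leq i \neq j \leq n$, I want to conclude that $x$ is scalar. Writing $t_{ij} = I_n + e_{ij}$, this is equivalent to $x e_{ij} = e_{ij} x$, and a direct entry-wise computation gives $(xe_{ij})_{kl} = x_{ki}\delta_{jl}$ while $(e_{ij}x)_{kl} = \delta_{ik}x_{jl}$. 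Comparing these for a single pair $(i,j)$ already forces $x_{ki}=0$ for $k\neq i$, $x_{jl}=0$ for $l\neq j$, and $x_{ii}=x_{jj}$; running over all $(i,j)$ with $i \neq j$ kills every off-diagonal entry and equates all diagonal entries, so $x = d(\alpha)$ for some $\alpha \in R^\times$ (using $x \in GL_n(R)$), hence $x \in R_G$.

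I don't foresee a genuine obstacle here; unlike the earlier Theorem~\ref{le:Tkl}, we do not need to invoke the existential structure of $T_{km}$, because the scalars are already cut out by a single finite conjunction of commutator equations with the transvections $t_{ij}$ as parameters. The argument is uniform in the ring $R$, commutative or not, and does not require anything about $Z(R^\times)$, since the lemma asks for \emph{all} scalar matrices in $G$ rather than only the central ones.
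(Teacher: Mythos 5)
Your proof is correct and takes essentially the same approach the paper relies on: you realize $R_G$ as the centralizer in $G$ of the finite set $\mathcal{T}_n$ of transvections $t_{ij}$, which is a finite conjunction of commutator equations with parameters from $\mathcal{T}_n \subseteq E_n(R) \subseteq G$ and hence Diophantine. This is exactly the characterization the authors invoke later (see the $PGL_n$/$PSL_n$ case in Proposition~\ref{pr:inter2}, where they remark that the scalar matrices form the centralizer of the transvections), and your entrywise verification that commuting with all $e_{ij}$ forces $x$ to be scalar is the right elementary check.
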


\begin{proposition} [\cite{MS3}] \label{le:D}
Let $G$ be a large subgroup of $GL_n(R)$, $n \geq 3$.  If $R^+$ does not have elements of order 2, then the following hold:
\begin{itemize}
    \item [1)] $G\cap D_n(R)$ is Diophantine in $G$.
    \item [2)] $G\cap T_n(R)$ is Diophantine in $G$.
\end{itemize}
\end{proposition}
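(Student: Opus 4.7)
My plan is to prove part (2) first, since the Diophantine definability of $G\cap T_n(R)$ will feed directly into the argument for part (1). For part (2), the structural fact I would use is $T_n(R)=N_{GL_n(R)}(UT_n(R))$; one direction is clear, and for the other, if $g\in GL_n(R)$ normalises $UT_n(R)$ it automatically normalises each characteristic subgroup $UT_n^k(R)=\gamma_k(UT_n(R))$, and a direct computation identifies the $R$-submodule of $R^n$ fixed pointwise by $UT_n^k(R)$ with the flag piece $V_k=\langle e_1,\dots,e_k\rangle$, so $gV_k=V_k$ for all $k$ and hence $g\in T_n(R)$. The normaliser condition is a priori a universal quantifier over $UT_n(R)$, but I would collapse it to a finite list of Diophantine clauses by the following linearity observation: for any $g\in GL_n(R)$ and any $1\le i\ne j\le n$, setting $A=g^{-1}e_{ij}g$ one has $g^{-1}t_{ij}(\beta)g=I+\beta A$ for every $\beta\in R$, so the single membership $g^{-1}t_{ij}(1)g\in UT_n(R)$ already forces $A$ to be strictly upper triangular, hence $g^{-1}T_{ij}g\subseteq UT_n(R)$. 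Adding the symmetric clauses $g\,t_{ij}(1)\,g^{-1}\in UT_n(R)$ for $1\le i<j\le n$ forces both inclusions simultaneously, and hence $g^{-1}UT_n(R)g=UT_n(R)$. Since $UT_n(R)$ is Diophantine in $G$ by Proposition~\ref{le:UT(n,R)}, the conjunction of these finitely many clauses Diophantinely defines $G\cap T_n(R)$.

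For part (1), I would carve the diagonal matrices out of $G\cap T_n(R)$ by commutation with certain sign matrices. For each pair $i\ne j$, let $d_{ij}$ denote the diagonal matrix with $-1$ in positions $i$ and $j$ and $1$ elsewhere. The identity $-I_2=(t_{12}(1)t_{21}(-1)t_{12}(1))^2$, applied on the $(i,j)$-block, shows that $d_{ij}\in E_n(R)\subseteq G$. My claim is
\[
G\cap D_n(R)=\{\,g\in G\cap T_n(R):[g,d_{ij}]=1\ \text{for all } i\ne j\,\},
\]
which is Diophantine given part (2) since the $d_{ij}$ are fixed elements of $G$. For the non-trivial inclusion, decompose $g=DU$ with $D\in D_n(R)$, $U\in UT_n(R)$ (uniquely, as $T_n(R)=UT_n(R)\rtimes D_n(R)$); since $D$ commutes with $d_{ij}$, the condition reduces to $d_{ij}^{-1}Ud_{ij}=U$. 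Writing $U$ in the canonical ordered product form obtained by iterating (\ref{eq:UT-m-n}) as $\prod t_{kl}(\beta_{kl})$, formula (\ref{eq:d-t}) shows conjugation by $d_{ij}$ multiplies each coefficient $\beta_{kl}$ by $\epsilon_k\epsilon_l\in\{\pm 1\}$, which equals $-1$ exactly when $|\{k,l\}\cap\{i,j\}|=1$. The fixed-point equation then forces $2\beta_{kl}=0$ for all such pairs, and the assumption that $R^+$ has no $2$-torsion upgrades this to $\beta_{kl}=0$. Because $n\ge 3$, for every $(k,l)$ one can choose $i=k$ and some $j\notin\{k,l\}$ to hit $\beta_{kl}$, so letting $(i,j)$ range over all pairs kills every coefficient of $U$ and forces $U=I$, i.e.\ $g\in D_n(R)$.

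The only place where the $2$-torsion hypothesis actually enters is this final coefficient-comparison in part (1); the main technical point to verify with care is that the coefficients of elements of $UT_n(R)$ in the normal form of (\ref{eq:UT-m-n}) can genuinely be read off and compared after conjugation, i.e.\ the uniqueness of the product decomposition of elements of $UT_n(R)$, which follows by induction on $m$ since each quotient $UT_n^m(R)/UT_n^{m+1}(R)$ is naturally identified with a direct sum of copies of $R^+$. Everything else --- the normaliser identity in (2), the linearity trick converting universal into finitely many existential clauses, and the fact that each $d_{ij}$ lies in $E_n(R)$ --- is formal once one has the Steinberg relations of Section~\ref{subsec:2.1}, the decomposition (\ref{eq:UT-m-n}), and the Diophantine definability of the $T_{ij}$ and of $UT_n(R)$ from Theorem~\ref{le:Tkl} and Proposition~\ref{le:UT(n,R)}.
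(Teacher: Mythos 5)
Your proof of part (2) has a gap in the ``linearity'' step: the identity $g^{-1}t_{ij}(\beta)g = I + \beta A$ with $A = g^{-1}e_{ij}g$ only holds when $R$ is commutative. For a general associative ring $R$ (which is the standing assumption of Section~\ref{prelim:sec}) one has $g^{-1}t_{ij}(\beta)g = I + g^{-1}(\beta e_{ij})g$, and the $(k,l)$-entry of $g^{-1}(\beta e_{ij})g$ is $(g^{-1})_{ki}\,\beta\, g_{jl}$, which need not equal $\beta\,(g^{-1})_{ki}\,g_{jl}$. In particular, $(g^{-1})_{ki}\,g_{jl}=0$ does not force $(g^{-1})_{ki}\,\beta\, g_{jl}=0$ when $R$ is noncommutative with zero divisors, so the single membership $g^{-1}t_{ij}(1)g\in UT_n(R)$ does not by itself give $g^{-1}T_{ij}g\subseteq UT_n(R)$. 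The structural identity $T_n(R)=N_{GL_n(R)}(UT_n(R))$ that you prove via the flag of fixed submodules of the $UT_n^k(R)$ is fine in full generality; it is only the collapse of the normalizer condition to finitely many existential clauses that breaks outside the commutative case.

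There is a cleaner route that repairs the generality and reverses the dependence between (1) and (2): use your sign matrices $d_{ij}$ directly in $G$, not inside $G\cap T_n(R)$. Conjugation by $d_{ij}$ multiplies the $(k,l)$-entry of any matrix by the central scalar $\epsilon_k\epsilon_l\in\{\pm 1\}$, so $[x,d_{ij}]=1$ is equivalent to $2x_{kl}=0$ for every $(k,l)$ with exactly one index in $\{i,j\}$. Since $n\geq 3$, every off-diagonal $(k,l)$ is hit by some $d_{ij}$ (take $i=k$, $j\notin\{k,l\}$), so the no-$2$-torsion hypothesis gives $C_{GL_n(R)}(\{d_{ij}:i\neq j\})=D_n(R)$ outright, with no appeal to the semidirect decomposition of $T_n(R)$ or to canonical forms and their uniqueness. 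Hence $G\cap D_n(R)$ is Diophantine in $G$ as a centralizer of finitely many elements of $E_n(R)\subseteq G$, giving (1) directly, and then $G\cap T_n(R)=(G\cap D_n(R))\cdot UT_n(R)$ is Diophantine as a product of two Diophantine subsets (using Proposition~\ref{le:UT(n,R)}), giving (2). Your part (1) computation is correct but longer than necessary, and because it feeds (2) into (1) it inherits the commutativity restriction; the centralizer observation removes both issues at once.
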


\subsection{Mutual Diophantine interpretability of the classical matrix groups and $R$}

\subsubsection{The Diophantine  interpretability of $R$ in classical linear groups}

\label{se:Dioph_6}

\begin{thm} \label{th:Rinterp}
Let $G$ be a large subgroup of $GL_n(R)$, $PGL_n(R)$ (assuming that $R$ has no zero divisors in this case), or $T_n(R)$, $n \geq 3$. Then the ring $R$ is e-interpretable in $G$.
\end{thm}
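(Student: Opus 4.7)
The plan is to build an e-interpretation of $R$ in $G$ whose domain is a single transvection subgroup and whose ring operations are read off from the Steinberg commutator relations, with the transvections in $\mathcal{T}_n$ serving as parameters.

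Since $n\geq 3$, I fix three distinct indices $1,2,3$ and take the domain to be the one-parameter subgroup $T_{13}$, equipped with the intended bijection $R \to T_{13}$, $\alpha \mapsto t_{13}(\alpha)$. The crucial input is that each $T_{ij}$ is Diophantine in $G$ with parameters from $\mathcal{T}_n$: for $G$ large in $GL_n(R)$ or $PGL_n(R)$ this is Theorem~\ref{le:Tkl} and its corollaries, while for $G$ large in $T_n(R)$ the upper transvections sit inside $UT_n(R) \subseteq G$ and an analogous Diophantine description from \cite{MS3} applies. Addition of ring elements is then interpreted by the group multiplication: inside $T_{13}$ the formula $xy=z$ says exactly $t_{13}(\alpha+\beta) = t_{13}(\alpha)t_{13}(\beta)$, and it is manifestly Diophantine.

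For multiplication I exploit the Steinberg identity $[t_{12}(\alpha), t_{23}(\beta)] = t_{13}(\alpha\beta)$ and set
\[
M(x,y,z) \;\equiv\; \exists a \in T_{12}\,\exists b \in T_{23}\;\bigl([a,t_{23}]=x \,\wedge\, [t_{12},b]=y \,\wedge\, [a,b]=z\bigr),
\]
using $t_{12}, t_{23} \in \mathcal{T}_n$ as parameters. For $x=t_{13}(\alpha)$ and $y=t_{13}(\beta)$ the first two conjuncts force $a=t_{12}(\alpha)$ and $b=t_{23}(\beta)$ uniquely (the maps $\alpha\mapsto t_{12}(\alpha)$ and $\beta\mapsto t_{23}(\beta)$ are bijective onto $T_{12}$ and $T_{23}$), and then the third conjunct reads $z=t_{13}(\alpha\beta)$. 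Since $T_{12}$ and $T_{23}$ are Diophantine and commutators are words in the group language, $M$ is a Diophantine graph of multiplication. Together with the addition formula this produces an e-interpretation of the ring $R$ in $G$.

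The three cases in the statement are handled uniformly by the argument above; the only case-sensitive issues are the availability of the $T_{ij}$ as Diophantine subgroups and the injectivity of $\alpha \mapsto t_{ij}(\alpha)$ onto the chosen copy in $G$. In $GL_n(R)$ and $T_n(R)$ injectivity is automatic, while for $PGL_n(R)$ a direct matrix computation shows that $t_{ij}(\gamma)$ is scalar only when $\gamma=0$, so the projected transvections remain in bijection with $R$; the no-zero-divisors hypothesis is what makes the $PGL_n$-version of Theorem~\ref{le:Tkl} from \cite{MS3} applicable. The main obstacle, as I see it, is not the algebraic content (which is classical Steinberg calculus) but the bookkeeping to ensure that quantifying over the subgroups $T_{ij}$ inside $M$ does preserve the positive existential form; this is handled by substituting the Diophantine defining formulas for $T_{12}$ and $T_{23}$ in place of the set-membership abbreviations.
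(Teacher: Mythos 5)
Your argument is essentially the paper's argument for the $GL_n(R)$ and $PGL_n(R)$ cases (the paper works with $T_{12}, T_{2n}, T_{1n}$, but taking $T_{12}, T_{23}, T_{13}$ is the same calculation). However, there is a genuine gap in your treatment of the $T_n(R)$ case. You assert that ``an analogous Diophantine description from \cite{MS3} applies'' so that $T_{12}$ and $T_{23}$ are Diophantine in a large subgroup $G$ of $T_n(R)$, but this is exactly what is \emph{not} available. Theorem~\ref{le:Tkl} and its corollaries are stated only for large subgroups of $GL_n(R)$, $SL_n(R)$, $PGL_n(R)$ — groups containing all of $E_n(R)$, in particular the lower transvections. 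A large subgroup of $T_n(R)$ only contains $UT_n(R)$, and there the superdiagonal subgroups $T_{i,i+1}$ are precisely the ones whose Diophantine definability fails (cf. Remark~\ref{re:int}, which restricts the claim to $j-i\ge 2$, and the proof of Lemma~\ref{tnomainbiinter:prop}, which explicitly flags $j=i+1$ as the exception). Your chosen auxiliary subgroups $T_{12}$ and $T_{23}$ both have $j-i=1$, so your formula $M(x,y,z)$ quantifies over sets that are not known to be Diophantine in $G$.

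The paper's Case~3 fixes this by replacing $T_{12}$ with the genuinely Diophantine set $T_{12}' = R_G T_{12} T_{1n}$ (and $T_{23}$ with $T_{23}' = R_G T_{23} T_{13}$ when $n=3$), where $R_G$ is the set of scalar matrices in $G$. The price is that the witnesses $x_1 \in T_{12}'$, $y_1 \in T_{23}'$ are no longer unique, but — and this is the second observation you would need — any choice yields the same commutator $[x_1,y_1]$, because scalar matrices and the factor $T_{1n}$ (or $T_{13}$) commute with the other witness in the relevant bracket. Without this replacement-plus-well-definedness step, the $T_n(R)$ case of your proof does not go through.
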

\begin{proof}
There are three cases to consider.

Case 1. $G$ is a large subgroup of $GL_n(R)$.
By Theorem  \ref{le:Tkl}  the subgroups $T_{12}$, $T_{2n}$, and $T_{1n}$ are Diophantine in $G$. 

We e-interpret $R$ on $T_{1n}$, turning it into a ring $\langle T_{1n}; \oplus, \otimes \rangle$ as follows. 

For $x, y \in T_{1n}$ define
\begin{equation} \label{eq:oplus}
    x \oplus y = x\cdot y
\end{equation}

Note that if $x = t_{1n}(\alpha), y = t_{1n}(\beta)$, then $x\cdot y = t_{1n}(\alpha + \beta)$, which corresponds to the addition in $R$. 

To define $x\otimes y$ for given $x, y \in T_{1n}$ we need some notation. Let $x_1, y_1 \in G$ be such that  
\begin{equation} \label{eq:e-interpR1}
x_1 \in T_{12} \text{ and } [x_1,t_{2n}] = x, \ \ y_1 \in T_{2n} \text{ and } [t_{12},y_1] = y.
\end{equation}
Note that such $x_1, y_1$ always exist and unique, for if $x = t_{1n}(\alpha), y = t_{1n}(\beta)$ then $x_1 = t_{12}(\alpha), y_1 = t_{2n}(\beta)$.
Now, define
$$
x\otimes y = [x_1,y_1].
$$
Observe, that in this case 
\begin{equation} \label{eq:e-interpR2}
[x_1,y_1] = [t_{12}(\alpha),t_{2n}(\beta)] = t_{1n}(\alpha\beta),
\end{equation}
so $\otimes$ corresponds to the multiplication in $R$. To finish the proof, we need two claims.

Claim 1. The map $\alpha \to t_{1n}(\alpha)$ gives rise to a ring isomorphism $R \to \langle T_{1n}; \oplus, \otimes \rangle$. 

This is clear from the argument above.

Claim 2. The ring $\langle T_{1n}; \oplus, \otimes \rangle$ is e-interpretable in $G$. 

To see this, observe first that, as mentioned above, $T_{1n}$ is Diophantine in $G$. The addition $\oplus$ defined in (\ref{eq:oplus}) is clearly Diophantine in $G$. Since the subgroups $T_{12}$ and $T_{2n}$ are Diophantine in $G$ the conditions (\ref{eq:e-interpR1})  are Diophantine in $G$, as well as the condition (\ref{eq:e-interpR2}).  This shows that the multiplication $\otimes$ is also Diophantine in $G$. This proves case 1.

Case 2. Let $G$ be a large subgroup of $PGL_n(R)$. The proof is similar to the one in Case 1. Only instead of
Theorem~\ref{le:Tkl}, one uses a similar result for $PGL_n$ (see \cite{MS3}).

Case 3. Let $G$ be a large subgroup of $T_n(R)$. To prove that $R$ is e-interpretable in $G$ we adjust the above argument by making a few changes. That is, we replace the subgroup $T_{12}$ by the Diophantine subgroup $T_{12}' = R_GT_{12}T_{1n}$ if $n>3$. If $n=3$, then we also replace $T_{2n}=T_{23}$ with $T_{23}'=R_GT_{23}T_{13}$. In both cases, the argument works word by word except that in these cases the elements $x_1 \in T_{12}'$ and $y_1\in T_{23}'$ if $n=3$, are not unique. However, this does not matter, since any such $x_1$ and $y_1$ give the same commutator $[x_1,y_1]$.  

 This proves the theorem.
\end{proof}

\begin{corollary} \label{co:inter1}
The ring $R$ is e-interpretable in the groups $GL_n(R)$, $SL_n(R)$ (assuming that in this case $R$ is commutative), $E_n(R)$, $T_n(R)$ and $UT_n(R)$, where $n \geq 3$. If in addition $R$ has no zero divisors, then $R$ is e-interpretable in $PGL_n(R)$ and $PSL_n(R)$ (as before, $R$ is also commutative in this case). 
\end{corollary}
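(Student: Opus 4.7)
The plan is to deduce the corollary directly from Theorem \ref{th:Rinterp} by checking that each group listed fits one of the three classes of ``large subgroups'' treated there. No further calculations are needed; the substantive content is entirely in the construction of $\oplus$ and $\otimes$ on $T_{1n}$ carried out in the proof of Theorem \ref{th:Rinterp}.

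First I would verify the $GL_n$-case. By definition, a subgroup of $GL_n(R)$ is large if it contains $E_n(R)$. Trivially $GL_n(R)$ and $E_n(R)$ themselves are large in $GL_n(R)$; and if $R$ is commutative (as we must assume whenever we speak of $SL_n$) then $SL_n(R)$ contains $E_n(R)$ because every transvection $t_{ij}(\alpha)$ has determinant $1$, as recorded in Section \ref{subsec:2.2}. Hence Case 1 of Theorem \ref{th:Rinterp} applies to each of $GL_n(R)$, $SL_n(R)$, and $E_n(R)$.

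Next I would handle the triangular case. A subgroup of $T_n(R)$ is large if it contains $UT_n(R)$. The group $T_n(R)$ clearly satisfies this, and $UT_n(R)$ is by definition a large subgroup of itself. So Case 3 of Theorem \ref{th:Rinterp} applies to both $T_n(R)$ and $UT_n(R)$. Finally, for the projective groups assume $R$ is commutative with no zero divisors, and let $\phi \colon GL_n(R) \to PGL_n(R)$ be the canonical projection. By definition, a subgroup of $PGL_n(R)$ is large if it contains $\phi(E_n(R))$. Both $PGL_n(R) = \phi(GL_n(R))$ and $PSL_n(R) = \phi(SL_n(R))$ contain $\phi(E_n(R))$, the second because $E_n(R) \le SL_n(R)$ as verified above. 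Thus Case 2 of Theorem \ref{th:Rinterp} gives e-interpretability of $R$ in both.

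The only point requiring any care --- and it is bookkeeping rather than a genuine obstacle --- is that $UT_n(R)$ is \emph{not} a large subgroup of $GL_n(R)$, as explicitly remarked in Section \ref{se:Unitriangular}; for this group one must invoke the $T_n$-case rather than the $GL_n$-case. Beyond sorting the seven groups into the correct three boxes, the corollary is an immediate specialization of Theorem \ref{th:Rinterp}.
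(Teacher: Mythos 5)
Your proof is correct and follows exactly the route the paper intends: the corollary is left without an explicit proof precisely because it is the specialization of Theorem~\ref{th:Rinterp} to concrete large subgroups, and you have correctly sorted each of the seven groups into the three cases (noting in particular that $UT_n(R)$ must be treated as a large subgroup of $T_n(R)$, not of $GL_n(R)$, and that $PSL_n(R)$ embeds in $PGL_n(R)$ as $\phi(SL_n(R)) \supseteq \phi(E_n(R))$).
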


\begin{rem} \label{re:int}
If $n\geq 3$, the ring $R$ is e-interpretable in the groups $GL_n(R)$, $PGL_n(R)$, $E_n(R)$ on each of the one-parameter subgroups $T_{ij}$. The same holds for $T_n(R)$ and $UT_n(R)$, if $j-i\geq 2$. If in addition $R$ is commutative, then it can be e-interpretable in $SL_n(R)$ and $PSL_n(R)$ on each of the one-parameter subgroups $T_{ij}$. 
\end{rem}

\subsubsection{The Diophantine absolute interpretability of classical linear groups in $R$}

Now we prove the converse of Theorem \ref{th:Rinterp} (with the exception of $E_n(R)$). We believe that the result is known in folklore.

\begin{proposition} \label{pr:inter2}
 The groups $GL_n(R)$, $T_n(R)$, and $UT_n(R)$,  are all e-interpretable in $R$. If $R$ is commutative, then the groups $PGL_n(R)$, $SL_n(R)$, and $PSL_n(R)$   are all e-interpretable in $R$.
\end{proposition}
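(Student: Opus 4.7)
The plan is to represent each element of the target group by a tuple of ring elements (the matrix entries, padded where necessary with auxiliary inverse witnesses), to write matrix multiplication as a fixed polynomial operation in these coordinates, and to cut out the universe and the equivalence relation by Diophantine conditions in $\Lrings$. No parameters are required, so what we produce is an \emph{absolute} e-interpretation of each group in $R$.

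First I would handle $UT_n(R)$, which is the easy case: take the universe to be $R^{n(n-1)/2}$ listing the strictly upper-triangular entries, with equality given by coordinate-wise equality and the group law given by the polynomial formulas for matrix multiplication in those coordinates. Since unitriangular matrices are automatically invertible and their inverses are polynomial in the entries, nothing else is needed.

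For $T_n(R)$ and $GL_n(R)$ the only wrinkle is that invertibility of the diagonal entries (respectively of the matrix) must itself be captured Diophantinely. I would resolve this by the standard trick of adjoining inverse witnesses: for $T_n(R)$, adjoin $n$ extra coordinates $b_1,\dots,b_n$ cut out by the Diophantine conditions $x_{ii}b_i=b_ix_{ii}=1$; for $GL_n(R)$ take pairs $(A,B)\in R^{2n^2}$ subject to $AB=BA=I_n$, with multiplication $(A,B)(A',B')=(AA',\,B'B)$. In both cases the equivalence relation of the interpretation is declared to be equality on the ``matrix part'' only; this is itself quantifier-free, and the fact that inverse witnesses need not be unique is harmless.

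Finally, assuming $R$ commutative, $SL_n(R)$ is already a Diophantine subset of $R^{n^2}$ defined by $\det(A)=1$, and the classical adjugate formula supplies a polynomial inverse, so the group law is polynomial and no auxiliary coordinates are needed. The projective groups $PGL_n(R)$ and $PSL_n(R)$ are then obtained from the preceding interpretations by imposing the Diophantine equivalence
$$
A\sim A' \ \Longleftrightarrow\ \exists\alpha\,\exists\beta\,\Bigl(\alpha\beta=1\ \wedge\ \bigwedge_{i,j}a'_{ij}=\alpha a_{ij}\Bigr),
$$
which quotients precisely by the center (the invertible scalar matrices, intersected with $SL_n$ in the second case). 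There is no serious obstacle; the entire argument is a routine verification that matrix multiplication, the determinant, invertibility via an adjoined inverse, and scalar equivalence are all Diophantine in $R$, with the only mildly delicate point being the choice of the ``matrix-part'' equivalence relation so as to absorb the non-uniqueness of the inverse witnesses in the $T_n$ and $GL_n$ cases.
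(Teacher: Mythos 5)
Your proposal is correct and follows essentially the same route as the paper: represent matrices by tuples of entries, observe that matrix multiplication is given by fixed integer polynomials (hence Diophantine), cut out the relevant subsets by positive-existential conditions, and pass to quotients by the (Diophantine) scalar-matrix equivalence for the projective groups. The only cosmetic differences are that you adjoin explicit two-sided inverse witnesses where the paper uses an existential quantifier directly in the Diophantine defining formula, and you write out the scalar-matrix equivalence relation directly where the paper first identifies the center as a Diophantine subgroup and then quotients; both choices are interchangeable and do not change the substance of the argument.
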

\begin{proof}
We represent an $n\times n$ matrix $x = (x_{ij})$ with entries in $R$ by an $n^2$-tuple $\bar x$ over $R$, where
$$
\bar x = (x_{11}, \ldots,x_{1n}, x_{21}, \ldots, x_{n1}, \ldots, x_{nn}).
$$
The matrix multiplication $\odot$ on tuples from $R^{n^2}$ is defined by
$$
\bar x \odot \bar y = \bar z \Longleftrightarrow \bigwedge_{i,j = 1}^n z_{ij} = P_{ij}(\bar x,\bar y),
$$
where $P_{ij}(\bar x, \bar y)$ is an integer polynomial $\Sigma_{s= 1}^n x_{is}y_{sj}$.
The multiplication $\odot$ is clearly Diophantine. 
To finish the description of the interpretations of the groups $GL_n(R)$, $SL_n(R)$, $T_n(R)$ and $UT_n(R)$ in $R$, it suffices to define the corresponding subsets of $R^{n^2}$ using Diophantine formulas. We do it case by case.

The case of $GL_n(R)$.   A matrix $x = (x_{ij})$ belongs to $GL_n(R)$ if it is invertible, that is, there exists a matrix $y = (y_{ij})$ such that $xy = I_n$. In the language of the tuples $\bar x$ and $\bar y$, this is expressed as
$$
\exists \bar y (\bar x \odot \bar y) = \bar I_n.
$$
This condition is Diophantine, so $GL_n(R)$ is e-interpretable in $R$.

The case of $SL_n(R)$. In this case $R$ is a commutative ring. Recall that the determinant of a $n\times n$ matrix $(x_{ij})$ with entries in a commutative ring $R$ can be calculated as the value of some fixed polynomial $Det_n(\bar x)$ in the tuple $\bar x$. Therefore, the matrix $x= (x_{ij})$ belongs to $SL_n(R)$ if and only if $Det_n(\bar x) = 1$. This is a Diophantine equation in $R$. 

The case of $PGL_n(R)$ and $PSL_n(R)$.   Observe that if the ring $R$ is commutative, then the subgroup of all scalar matrices in $GL_n(R)$ and $SL_n(R)$ is Diophantine as the centralizer of the finite set of all transvections $t_{ij}$. Therefore, the factor groups $PGL_n(R)$ and $PSL_n(R)$ are e-interpretable, correspondingly,  in the groups $GL_n(R)$ and $SL_n(R)$. Now, the result follows from the previous two cases by the transitivity of e-interpretability.

The case of $T_n(R)$. By definition $x= (x_{ij}) \in T_n(R)$ if and only if $x_{ij} =0$ for all $i<j$ and there exists $y$ in $R$ such that $x_{11} \ldots x_{nn} y = 1$. All these conditions are Diophantine, so $T_n(R)$ is e-interpretable in $R$.

The case of $UT_n(R)$ is similar to the one above.
This proves the proposition.

\end{proof}

\begin{thm} \label{th:undec}
    Let $n \geq 3$. The Diophantine problem in each of the groups $GL_n(R)$, $SL_n(R)$ (assuming that in this case $R$ is commutative), $T_n(R)$, and $UT_n(R)$, as well as in the groups $PGL_n(R)$  and $PSL_n(R)$ (in this case we additionally assume that the ring $R$ has no zero divisors) is Karp equivalent to the Diophantine problem in $R$. 
    In particular, the Diophantine problem in all these groups is decidable if and only if it is decidable in $R$.
\end{thm}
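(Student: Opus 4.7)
The plan is to derive Theorem \ref{th:undec} as a direct consequence of the two preceding results, namely Corollary \ref{co:inter1} (which says $R$ is e-interpretable in each of the listed groups) and Proposition \ref{pr:inter2} (which says each of those groups is e-interpretable in $R$). The only additional ingredient needed is the standard observation that an e-interpretation (Diophantine interpretation) of a structure $\mathcal{A}$ in a structure $\mathcal{B}$ induces a Karp (polynomial-time many-one) reduction of the Diophantine problem of $\mathcal{A}$ to the Diophantine problem of $\mathcal{B}$. Once this lemma is in hand, Karp equivalence in both directions follows immediately.

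First I would state (or cite) the general principle: suppose $\mathcal{A}$ is e-interpretable in $\mathcal{B}$ via a tuple of Diophantine formulas $\Gamma = (\varphi_U, \varphi_{=}, \varphi_{f_1}, \ldots)$ where each basic relation and operation of $\mathcal{A}$ is encoded by a positive existential formula over $\mathcal{B}$. Given a system $\Sigma(\bar x)$ of equations over $\mathcal{A}$, one replaces every variable $x_i$ by a tuple of new variables ranging over $\mathcal{B}^k$ (where $k$ is the arity of the interpretation), and replaces every atomic equation $t_1(\bar x) = t_2(\bar x)$ of $\Sigma$ by the conjunction of the corresponding Diophantine formulas from $\Gamma$ that encode the same equality in $\mathcal{B}$. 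The resulting system $\Sigma^*$ is a system of equations over $\mathcal{B}$ of size polynomial in the size of $\Sigma$, and $\Sigma$ has a solution in $\mathcal{A}$ if and only if $\Sigma^*$ has a solution in $\mathcal{B}$. This is exactly a Karp reduction.

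Next I would apply this principle in the two directions. By Proposition \ref{pr:inter2}, each of the groups $G \in \{GL_n(R), SL_n(R), T_n(R), UT_n(R), PGL_n(R), PSL_n(R)\}$ (under the respective commutativity or zero-divisor hypotheses) is e-interpretable in $R$, which yields a Karp reduction of the Diophantine problem in $G$ to the Diophantine problem in $R$. Conversely, by Corollary \ref{co:inter1}, the ring $R$ is e-interpretable in each such $G$, so the Diophantine problem in $R$ Karp-reduces to the Diophantine problem in $G$. Combining the two gives Karp equivalence, and the last sentence of the theorem about decidability is then immediate since Karp reductions preserve decidability.

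There is no serious obstacle in this argument; the work has already been done in the preceding sections. The only point that deserves a brief check is that the interpretations exhibited in Theorem \ref{th:Rinterp} and Proposition \ref{pr:inter2} really are \emph{existential} (Diophantine) rather than merely first-order, since Karp equivalence of Diophantine problems requires positive existential interpretations. Inspection of the constructions confirms this: in Proposition \ref{pr:inter2} the domains and operations are cut out by polynomial equations, and in Theorem \ref{th:Rinterp} the subgroups $T_{ij}$, the addition $\oplus$, and the commutator-based multiplication $\otimes$ are all defined by systems of group-theoretic equations. Hence both reductions are Karp reductions, and the theorem follows.
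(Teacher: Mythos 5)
Your proposal is correct and matches the paper's own proof, which simply invokes Corollary~\ref{co:inter1}, Proposition~\ref{pr:inter2}, and the standard properties of e-interpretability (with the details deferred to~\cite{MS3}). You spell out the Karp-reduction mechanism explicitly, but the logical route is identical.
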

\begin{proof}
The result follows from Corollary \ref{co:inter1}, Proposition \ref{pr:inter2}, and the properties of e-interpretability (see the complete details in~\cite{MS3}). 
\end{proof}

We cannot show that $E_n(R)$ is e-interpretable in $R$ for any associative ring $R$. However, the following holds.  

\begin{thm}
If $E_n(R)$ has bounded elementary generation then $E_n(R)$ is e-interpretable in $R$. In this case, the Diophantine problem in $E_n(R)$ is Karp equivalent to the Diophantine problem in $R$. 
\end{thm}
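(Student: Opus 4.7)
The plan is to exploit bounded elementary generation to parametrize elements of $E_n(R)$ by tuples in $R^m$ for a fixed $m$, and then to show that the group operations correspond to Diophantine relations on those tuples. This is the natural analogue of Proposition~\ref{pr:inter2} for $E_n(R)$: the difficulty is that, unlike $SL_n(R)$ or $GL_n(R)$, there is no obvious single Diophantine condition on the $n^2$ matrix entries that carves out $E_n(R)$ from $GL_n(R)$; bounded generation supplies a surjective parametrization in place of such a cutting-out condition.

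First I fix a sequence of pairs $(i_1,j_1),\ldots,(i_m,j_m)$ provided by the bounded elementary generation hypothesis, so that the map
\[
\Phi : R^m \to E_n(R), \qquad \Phi(\alpha_1,\ldots,\alpha_m) \;=\; t_{i_1j_1}(\alpha_1)\cdots t_{i_mj_m}(\alpha_m),
\]
is surjective. Expanding the product, each entry $P_{kl}(\bar\alpha)$ of the matrix $\Phi(\bar\alpha)$ is a fixed integer polynomial in $\alpha_1,\ldots,\alpha_m$; these polynomials depend only on $n$ and on the fixed sequence, not on the ring $R$.

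With domain $R^m$ in hand, I set up the e-interpretation of $E_n(R)$ in $R$ as follows. The equivalence relation is defined by $\bar\alpha \sim \bar\beta \iff \bigwedge_{k,l=1}^{n} P_{kl}(\bar\alpha) = P_{kl}(\bar\beta)$; the multiplication graph by $\bar\gamma = \bar\alpha \cdot \bar\beta \iff \bigwedge_{k,l} P_{kl}(\bar\gamma) = \sum_{s} P_{ks}(\bar\alpha)\,P_{sl}(\bar\beta)$; the identity element is represented by $(0,\ldots,0)$ since $\Phi(\bar 0) = I_n$; and inversion is defined via $\bar\alpha\cdot\bar\alpha' \sim \bar 0$. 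Each of these is a conjunction of polynomial equations over $R$, hence Diophantine. Bounded generation is precisely what guarantees that the multiplication and inversion relations are total, i.e., that for every $\bar\alpha,\bar\beta$ some $\bar\gamma,\bar\alpha'\in R^m$ realising $\Phi(\bar\alpha)\Phi(\bar\beta)$ and $\Phi(\bar\alpha)^{-1}$ exist, so $\Phi$ descends to a group isomorphism $R^m/{\sim}\;\cong\;E_n(R)$.

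For the Karp-equivalence assertion, I combine the above e-interpretation of $E_n(R)$ in $R$ with Theorem~\ref{th:Rinterp}, which gives e-interpretability of $R$ in $E_n(R)$ (since $E_n(R)$ is, by definition, a large subgroup of itself inside $GL_n(R)$). Mutual e-interpretability together with the standard reduction properties noted in the proof of Theorem~\ref{th:undec} then yields Karp equivalence of the two Diophantine problems. The only conceptual obstacle in the whole argument is that without bounded generation no fixed-arity polynomial map $R^m \to GL_n(R)$ can hit all of $E_n(R)$, and the surjectivity of $\Phi$ breaks down; the hypothesis is used in exactly one essential place, after which the Diophantine nature of everything else is automatic from polynomial identities over $R$.
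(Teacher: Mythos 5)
The paper states this theorem without supplying a proof, so there is no written argument to compare against. Your proof is correct and is the natural completion. The key observation---using bounded elementary generation to supply a surjective polynomial parametrization $\Phi\colon R^m \to E_n(R)$ whose matrix entries $P_{kl}(\bar\alpha)$ are fixed integer polynomials, in lieu of the missing Diophantine ``cutting-out'' condition (there is no determinant equation available to carve $E_n(R)$ out of $GL_n(R)$ for general associative $R$)---is exactly the right idea, and it mirrors how bounded elementary generation is exploited in the proof of Theorem~A1 for $SL_n(F)$. The e-interpretation data you set up (domain $R^m$, the kernel equivalence $\sim$ of $\Phi$, multiplication and inversion graphs defined by matrix-entry polynomial equalities) is all given by positive conjunctions of polynomial equations, hence Diophantine, and well-definedness and totality of the operations on $\sim$-classes is immediate because everything factors through $\Phi$ and $\Phi$ is onto. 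For the Karp-equivalence, invoking Theorem~\ref{th:Rinterp} for the reverse direction (since $E_n(R)$ is a large subgroup of $GL_n(R)$ by definition) and the standard transitivity properties of e-interpretability, exactly as in Theorem~\ref{th:undec}, is the right move. No gaps.
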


\begin{corollary}
If $n \geq 3$, and $E_n(R)$ has bounded elementary generation, then the Diophantine problem in $E_n(R)$ is Karp equivalent to the Diophantine problem in $R$.
\end{corollary}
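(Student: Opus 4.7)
The plan is to establish the Karp equivalence by composing two reductions already available in the excerpt, one in each direction, and the corollary will be immediate.

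For the reduction from the Diophantine problem in $E_n(R)$ to the Diophantine problem in $R$, I would simply invoke the theorem that immediately precedes this corollary: under the bounded elementary generation hypothesis, $E_n(R)$ is e-interpretable in $R$, and e-interpretability yields a polynomial-time (in fact Karp) reduction of the Diophantine problem of the interpreted structure to that of the interpreting one. Concretely, under bounded generation one represents an element of $E_n(R)$ by the tuple of parameters $(\alpha_1, \ldots, \alpha_m) \in R^m$ in the fixed product $t_{i_1 j_1}(\alpha_1) \cdots t_{i_m j_m}(\alpha_m)$ provided by the bounded generation hypothesis, and multiplication becomes a tuple of fixed integer polynomial relations over $R$, so each word equation over $E_n(R)$ translates into a finite system of polynomial equations over $R$.

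For the reverse reduction, I would appeal to the fact that $E_n(R)$ is a large subgroup of $GL_n(R)$ (since by definition it contains itself, and by the standing assumption $n \geq 3$). Consequently Theorem~\ref{th:Rinterp}, or equivalently Corollary~\ref{co:inter1}, applies and gives that $R$ is e-interpretable in $E_n(R)$. This yields a Karp reduction of the Diophantine problem in $R$ to the Diophantine problem in $E_n(R)$. The explicit interpretation is on the Diophantine subgroup $T_{1n}$, with $\oplus$ being group multiplication and $\otimes$ defined through the commutator construction $(x,y) \mapsto [x_1, y_1]$ with $x_1 \in T_{12}$, $y_1 \in T_{2n}$ chosen Diophantinely.

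Composing the two reductions gives Karp equivalence in both directions, which is exactly the claim of the corollary. There is really no technical obstacle to surmount here: both reductions and the constants hidden in the Karp reductions have been set up in the previous results, and the only role of the hypotheses is to guarantee applicability, namely $n \geq 3$ is needed to invoke Corollary~\ref{co:inter1} for the reverse direction, while bounded elementary generation is needed to invoke the preceding theorem for the forward direction. The proof therefore reduces to a one-line citation of the two results and the transitivity of Karp reducibility.
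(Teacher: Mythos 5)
Your proposal is correct and matches the paper's (implicit) argument exactly: the forward reduction comes from the preceding theorem giving e-interpretability of $E_n(R)$ in $R$ under bounded elementary generation, the reverse reduction comes from Corollary~\ref{co:inter1} (via $E_n(R)$ being a large subgroup of $GL_n(R)$ when $n \geq 3$), and transitivity of Karp reducibility finishes the job. The paper leaves this corollary without an explicit proof precisely because it is this immediate combination.
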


For an arbitrary associative ring $R$, we have the following consequence of Corollary \ref{co:inter1}. 

\begin{proposition}
If the Diophantine problem is undecidable in a ring $R$, then it is also undecidable in the group $E_n(R)$ for any $n \geq 3$.
\end{proposition}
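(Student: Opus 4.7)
The plan is to reduce the Diophantine problem in $R$ to the Diophantine problem in $E_n(R)$ using the e-interpretation of $R$ in $E_n(R)$ supplied by Corollary~\ref{co:inter1}. Since the statement is the contrapositive of "decidable in $E_n(R)$ implies decidable in $R$," it suffices to produce a polynomial-time (or just algorithmic) reduction from systems of polynomial equations over $R$ to systems of Diophantine equations in $E_n(R)$.

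Concretely, recall from the proof of Theorem~\ref{th:Rinterp} that $R$ is e-interpreted on the one-parameter subgroup $T_{1n}$ of $E_n(R)$ (note that $E_n(R)$ is a large subgroup of $GL_n(R)$ by definition, so Theorem~\ref{le:Tkl} applies to $E_n(R)$ and yields that $T_{12}$, $T_{2n}$, and $T_{1n}$ are Diophantine in $E_n(R)$ with parameters in $\mathcal{T}_n$). Under the isomorphism $\alpha \mapsto t_{1n}(\alpha)$, the ring operations of $R$ correspond respectively to group multiplication in $T_{1n}$ and to the commutator construction $x \otimes y = [x_1,y_1]$ from (\ref{eq:e-interpR1})--(\ref{eq:e-interpR2}), both of which are Diophantine in $E_n(R)$.

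Given this interpretation, I would translate any polynomial equation $P(\alpha_1,\ldots,\alpha_k)=0$ over $R$ as follows: introduce variables $X_1,\ldots,X_k$ ranging over $E_n(R)$, impose the Diophantine conditions $X_i \in T_{1n}$, and then rewrite $P$ as an expression in $\oplus$ and $\otimes$ applied to $X_1,\ldots,X_k$ and to the parameters $t_{ij} \in \mathcal{T}_n$, obtaining a Diophantine system $\tilde P(X_1,\ldots,X_k, \bar{t}) = \mathbf{1}$ in $E_n(R)$ that is satisfiable if and only if $P=0$ is satisfiable over $R$. A decision procedure for the Diophantine problem in $E_n(R)$ would therefore yield one for the Diophantine problem in $R$, contradicting the hypothesis.

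There is no real obstacle here: the argument is a direct application of the general principle that e-interpretability transports decidability of Diophantine problems (the same principle underlying Theorem~\ref{th:undec}). The only minor point worth verifying is that the parameters $\mathcal{T}_n$ used in the interpretation are fixed elements of $E_n(R)$ and so may be freely employed as constants in Diophantine equations over $E_n(R)$; this is immediate. Consequently, the result follows at once from Corollary~\ref{co:inter1}, and in fact the stronger statement that the Diophantine problem in $R$ is Karp-reducible to that in $E_n(R)$ holds by the same reasoning.
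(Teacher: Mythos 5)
Your proof is correct and follows exactly the route the paper intends: the paper presents the proposition as a direct consequence of Corollary~\ref{co:inter1} (that $R$ is e-interpretable in $E_n(R)$), and you correctly unpack why an e-interpretation of $R$ in $E_n(R)$ yields an algorithmic reduction of the Diophantine problem in $R$ to that in $E_n(R)$, from which undecidability is transported in the required direction. Your remark that this even gives a Karp reduction is also accurate, and your observation that only the one-directional interpretation (of $R$ in $E_n(R)$) is needed explains precisely why this proposition holds for arbitrary associative rings even though the paper cannot establish the reverse interpretation in general.
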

\section{Groups first-order equivalent to $SL_n(F)$}\label{sec:sln}
\subsection{Bi-interpretability of $R$ and $SL_n(R)$}

Let $\T_n$ be a tuple of all transvections $t_{ij}(1), 1\leq i \neq j \leq n$ ordered in some arbitrary but fixed way.

\begin{lemma} \label{le:int of R in G}
 Let $R$ be an associative commutative unitary ring. Then $R$ is interpretable in every large subgroup of $SL_n(R)$, $n \geq 3$ using Diophantine formulas with constants from $\T_n$.
\end{lemma}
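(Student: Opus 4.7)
The plan is to reduce this lemma to the construction already given in the proof of Theorem~\ref{th:Rinterp} (Case 1). The key observation is that a large subgroup $G$ of $SL_n(R)$ is, by definition, a subgroup containing $E_n(R)$, and hence is automatically a large subgroup of $GL_n(R)$. Therefore all the Diophantine definability results proved for large subgroups of $GL_n(R)$ apply verbatim to $G$. In particular, by Theorem~\ref{le:Tkl}, for every $1 \leq k \neq m \leq n$ the one-parameter subgroup $T_{km}$ is Diophantine in $G$ using only constants from the tuple $\T_n = \{t_{ij}(1) \mid 1\leq i \neq j \leq n\}$.

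With this in hand, I would take the universe of the interpretation of $R$ to be the Diophantine set $T_{1n}$, and define the ring operations exactly as in Theorem~\ref{th:Rinterp}: addition $x \oplus y = x \cdot y$ (group multiplication in $G$), and multiplication
$$
x \otimes y = [x_1, y_1],
$$
where $x_1 \in T_{12}$ is the unique element with $[x_1, t_{2n}] = x$ and $y_1 \in T_{2n}$ is the unique element with $[t_{12}, y_1] = y$. Uniqueness of $x_1,y_1$ follows from the Steinberg relations in $SL_n(R)$, and the Steinberg commutator identity $[t_{12}(\alpha), t_{2n}(\beta)] = t_{1n}(\alpha\beta)$ shows that $\otimes$ corresponds to multiplication in $R$, while $t_{1n}(\alpha) \cdot t_{1n}(\beta) = t_{1n}(\alpha+\beta)$ shows that $\oplus$ corresponds to addition. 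Thus the map $\alpha \mapsto t_{1n}(\alpha)$ is a ring isomorphism from $R$ onto the interpreted structure.

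To see that this really is a Diophantine interpretation with constants from $\T_n$, I only need to note that (i) the universe $T_{1n}$ is Diophantine over $\T_n$ by Theorem~\ref{le:Tkl}; (ii) the graph of $\oplus$ is Diophantine since it is just the graph of multiplication restricted to $T_{1n} \times T_{1n}$; and (iii) the graph of $\otimes$ is Diophantine because the auxiliary quantifiers range over the Diophantine sets $T_{12}$ and $T_{2n}$, and the defining conditions $[x_1,t_{2n}]=x$, $[t_{12},y_1]=y$, $z=[x_1,y_1]$ involve only the constants $t_{12}, t_{2n} \in \T_n$ together with group multiplication. Since no genuine obstacle arises beyond recording that the parameters used in Case~1 of Theorem~\ref{th:Rinterp} lie inside $\T_n$ and that large subgroups of $SL_n(R)$ inherit all the relevant Diophantine structure, this completes the proof.
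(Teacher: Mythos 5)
Your proposal is correct and follows essentially the same route as the paper: invoke Theorem~\ref{le:Tkl} to get the one-parameter subgroups $T_{12}$, $T_{2n}$, $T_{1n}$ Diophantine over $\T_n$, interpret $R$ on $T_{1n}$ with $\oplus$ as group multiplication and $\otimes$ via the Steinberg commutator, exactly as in Case~1 of Theorem~\ref{th:Rinterp}. The only added touch is your explicit remark that a large subgroup of $SL_n(R)$ is also a large subgroup of $GL_n(R)$, which the paper uses implicitly.
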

\begin{proof}

 Let $G$ be a large subgroup of $SL_n(R)$.
By Theorem \ref{le:Tkl}, the subgroups $T_{12}$, $T_{2n}$, and $T_{1n}$ are Diophantine in $G$ with parameters $\T_n$. 

Similarly to the proof of Case 1 in Theorem~\ref{th:Rinterp} we interpret $R$ on $T_{1n}$ turning it into a ring $\langle T_{1n}; \oplus, \otimes \rangle$.This proves the lemma, but we would like to note that similar to the ring $\langle T_{1n}; \oplus, \otimes \rangle$ one can interpret the ring $R$ in $G$ as $\langle T_{ik}; \oplus, \otimes \rangle$ for any  $1\leq i \neq k \leq n$. In fact, in this case one may use the Diophantine subgroups $T_{ij}$, $T_{jk}$ and $T_{ik}$, instead of $T_{12}, T_{2n}$ and $T_{1n}$ where $j$ is an arbitrary index not equal to $i$ and $k$.  For further references  we will use notation
\begin{equation} \label{int-ik}
R_{ik} = \langle T_{ik}; \oplus, \otimes \rangle.
\end{equation}

\end{proof}

\begin{lemma} \label{le:connected iso}
Let $R$ be an associative commutative unitary ring, $n \geq 3$, and $1\leq i\neq j \leq n$, $1\leq k \neq m\leq n$. Then the following hold:
\begin{itemize}
    \item [1)] The map 
$f_{i,j,k,m}: T_{ij} \to T_{km}$ defined by
$$
f_{i,j,k,m}: t_{ij}(\alpha) \to t_{km}(\alpha), \ \alpha \in R, 
$$
is Diophantine  in $G$ with parameters $\T_n$;
\item [2)] The map $f_{i,j,k,m}$ defined above gives an isomorphism of interpretations $R_{ij} \to R_{km}$ of the ring $R$ in $G$.
\end{itemize}
\end{lemma}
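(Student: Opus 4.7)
My plan is to prove part~1) by writing down an explicit existential formula for the graph of $f_{i,j,k,m}$ built from Steinberg commutator identities and the Diophantine subgroups $T_{pq}$ from Theorem~\ref{le:Tkl}, and to deduce part~2) by a direct calculation showing that $f_{i,j,k,m}$ respects both ring operations.

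For part~1), the key observation is Steinberg relation~2): for pairwise distinct indices $p,q,r$ one has $[t_{pq}(\alpha),\,t_{qr}(1)] = t_{pr}(\alpha)$ and $[t_{pq}(1),\,t_{qr}(\alpha)] = t_{pr}(\alpha)$, so a commutator with a fixed element of $\T_n$ moves either the first or the second index of $t_{ij}(\alpha)$. Since $n\geq 3$ one always has a free auxiliary index, and a case analysis on the incidence of $\{i,j\}$ with $\{k,m\}$ produces a fixed commutator word
$$w(x;\,t_{a_1b_1},\ldots,t_{a_rb_r})$$
of length at most three such that $w(t_{ij}(\alpha);\ldots) = t_{km}(\alpha)$ for every $\alpha\in R$. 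The generic case (all four indices available and distinct) needs only the two-step word $[t_{ki}(1),\,[t_{ij}(\alpha),\,t_{jm}(1)]]$; the ``transpose'' case $(m,k)\to(k,m)$ is the most delicate and is handled by a three-fold nested commutator through an auxiliary index $p\notin\{k,m\}$. Once $w$ is fixed, I would define the graph of $f_{i,j,k,m}$ in $G$ by
$$\phi(x,y) \;\equiv\; x\in T_{ij}\;\wedge\; y\in T_{km}\;\wedge\; y = w(x;\,t_{a_1b_1},\ldots,t_{a_rb_r}),$$
which is Diophantine by Theorem~\ref{le:Tkl} (applied to each factor subgroup $T_{pq}$) together with the fact that equalities of group words are Diophantine.

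For part~2), both $R_{ij}$ and $R_{km}$ are canonically isomorphic to $R$ via the parameterizations $\alpha\mapsto t_{ij}(\alpha)$ and $\alpha\mapsto t_{km}(\alpha)$, so $f_{i,j,k,m}$ is precisely the composition of these two canonical ring isomorphisms. A direct verification confirms both axioms: for addition, $f(t_{ij}(\alpha)\cdot t_{ij}(\beta)) = f(t_{ij}(\alpha+\beta)) = t_{km}(\alpha+\beta) = f(t_{ij}(\alpha))\cdot f(t_{ij}(\beta))$, and the Steinberg recipe used to define $\otimes$ in any $R_{pq}$ gives $t_{pq}(\alpha)\otimes t_{pq}(\beta) = t_{pq}(\alpha\beta)$, so the analogous identity holds for $\otimes$.

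The main obstacle is the bookkeeping in part~1): one must verify that for every incidence pattern of $(i,j)$ with $(k,m)$, and especially when $n=3$ so that every index is already in use, a suitable commutator word with parameters in $\T_n$ exists. The Steinberg relations together with $n\geq 3$ always deliver such a chain of length at most three, as the concrete verification $t_{31}(\alpha) = [[t_{31},\,t_{12}(\alpha)],\,t_{21}]$ illustrates in the most constrained case.
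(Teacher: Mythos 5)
Your proposal is correct and follows essentially the same route as the paper: commute $t_{ij}(\alpha)$ with fixed transvections from $\T_n$ to move one index at a time (the paper's Cases~1 and~2), then compose for the general case. One place where you are actually more careful than the paper's own argument: the paper's Case~3 composes $f_{imkm}\circ f_{ijim}$, which is only legitimate when $i\neq m$; your remark that the pure transpose $T_{ij}\to T_{ji}$ requires a three-step chain through an auxiliary index $p\notin\{i,j\}$ (and that $n\geq 3$ always supplies such a $p$) correctly fills the remaining sub-case that the paper glosses over, and your worked example $T_{12}\to T_{31}$ at $n=3$ is exactly the kind of incidence pattern the paper's composition would not handle as written.
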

\begin{proof}
Fix arbitrary $1\leq i\neq j \leq n$ and $1\leq k \neq m\leq n$.

Case 1. Let $i = k$. Note that in this case $j \neq m$. Take $x = t_{ij}(\alpha) \in T_{ij}$. Then $f_{ijkm}(x) = y$ where $y = t_{km}(\alpha) =  [t_{ij}(\alpha),t_{jm}(1)] = [x,t_{jm}(1)]$.

Case 2. Suppose $j = m$. Note that in this case $i \neq k$. Take $x = t_{ij}(\alpha) \in T_{ij}$. Then $f_{ijkm}(x) = y$ where $y = t_{km}(\alpha) = [t_{ki}(1),t_{im}(\alpha)] = [t_{ki}(1),x]$.

Observe that both maps $f_{ijim}$ and $f_{imkm}$ are Diophantine in $G$ with parameters $\T_n$.

Case 3. Assume $i \neq k$ and $j \neq m$. In this case, the map $f_{ijkm}$ is the  composition of the two maps
$f_{ijkm} = f_{imkm} \circ f_{ijim}$ each of which is Diophantine in $G$. Hence,  $f_{ijkm}$ is also Diophantine in $G$ as the composition of Diophantine maps.

\end{proof}

\begin{lemma} \label{le:inter of SL(n,R) in R}
 Let $R$ be an associative commutative unitary ring and $n \geq 3$. Then the group $SL_n(R)$ is absolutely interpretable by equations in $R$.
\end{lemma}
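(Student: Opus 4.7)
The plan is to essentially reprise, and mildly sharpen, the argument already given in Proposition~\ref{pr:inter2} for the case of $SL_n(R)$, now emphasizing that the interpretation uses neither parameters nor negations/disjunctions, so that it is absolute and equational (Diophantine).

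First, I would take as the universe of the interpretation the set $R^{n^2}$, viewing a tuple $\bar x = (x_{11}, \ldots, x_{nn})$ as the matrix $(x_{ij})$. The subset corresponding to $SL_n(R)$ is cut out by the single polynomial equation $\mathrm{Det}_n(\bar x) = 1$, where $\mathrm{Det}_n$ is the fixed integer polynomial expressing the determinant; since $R$ is commutative, this polynomial is well-defined over $R$ and the condition is Diophantine without parameters.

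Second, I would interpret the group multiplication coordinate-wise by the system of $n^2$ bilinear identities $z_{ij} = \sum_{s=1}^n x_{is} y_{sj}$, which is a conjunction of equations in $R$. The identity element is the constant tuple $\bar I_n$ with $x_{ij} = \delta_{ij}$, again specified by equations. Crucially, because we restrict to matrices with $\mathrm{Det}_n(\bar x) = 1$, the inverse of $\bar x$ is simply the adjugate $\mathrm{adj}(\bar x)$, whose entries are fixed integer polynomials in the $x_{ij}$; hence inversion is also given by an equational (in fact, term) definition. Nothing in any of these formulas mentions a parameter from $R$, so the interpretation is absolute.

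There is no real obstacle here; the content is essentially the observation that matrix arithmetic over a commutative ring is polynomial, together with the fact that on $SL_n(R)$ the inverse coincides with the adjugate. The only thing worth checking is that each clause (universe, graph of multiplication, identity, inversion) is expressed by a conjunction of polynomial equations over $R$ with no parameters, which is immediate from elementary linear algebra. This gives the desired absolute e-interpretation of $SL_n(R)$ in $R$, from which the lemma follows.
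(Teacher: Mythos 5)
Your proof is correct and takes essentially the same route as the paper: represent matrices as $n^2$-tuples, cut out $SL_n(R)$ by the single polynomial equation $\mathrm{Det}_n(\bar x)=1$, and define multiplication via the bilinear polynomials $P_{ij}$, all parameter-free. Your extra remark that inversion is given termwise by the adjugate is a nice sharpening, though not strictly needed since $y=x^{-1}$ is already equational via $\bar x \odot \bar y = \bar I_n$.
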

\begin{proof}
We represent an $n\times n$ matrix $x = (x_{ij})$ with entries in $R$ by an $n^2$-tuple $\bar x$ over $R$, where 
$$
\bar x = (x_{11}, \ldots,x_{1n}, x_{21}, \ldots, x_{n1}, \ldots, x_{nn}).
$$
The matrix multiplication $\odot$ on tuples from $R^{n^2}$ is defined by 
$$
\bar x \odot \bar y = \bar z \Longleftrightarrow \bigwedge_{i,j = 1}^n z_{ij} = P_{ij}(\bar x,\bar y),
$$
where $P_{ij}(\bar x, \bar y)$ is   integer polynomial $\Sigma_{s= 1}^n x_{is}y_{sj}$.
Thus, the multiplication $\odot$ is interpretable by the equations in $R$. 
To finish the description of the interpretation, it suffices to define the subset of $R^{n^2}$ which corresponds to $SL_n(R)$ by equations. We do it case by case.

 Recall that the determinant of an $n\times n$-matrix $(x_{ij})$ with entries in a commutative ring $R$ can be calculated as the value of some fixed polynomial $Det_n(\bar x)$ in the tuple $\bar x$. Therefore, the matrix $x= (x_{ij})$ belongs to $SL_n(R)$ if and only if $Det_n(\bar x) = 1$. This gives an equation that defines $SL_n(R)$ in $R$.  

\end{proof}

\begin{rem}
We refer to the interpretation of $SL_n(R)$ in $R$ from Lemma \ref{le:inter of SL(n,R) in R} as the \emph{standard interpretation}.
\end{rem}

{\bf Theorem A1.} {\it Let $F$ be a field, and $n \geq 3$. Then the group $SL_n(F)$ is regularly bi-interpretable with $F$.}

\begin{proof}
 Let $G = SL_n(F)$. Any field, hence  $F$,  has the elementary bounded generation property. Therefore, there is a number $m$ such that every element $g \in G$ is a product of at most $k$ transvections. It follows that there is fixed sequence $\sigma$ of pairs of indexes $(i_1,j_1), \ldots,(i_m,j_m)$ such that  for every element $g \in G$ there exists a sequence of elements $\alpha_1, \ldots, \alpha_m \in F$ such that 
 \begin{equation} \label{eq:A}
 g = t_{i_1j_1}(\alpha_1) \ldots t_{i_mj_m}(\alpha_m).
 \end{equation}
 In this case, we will write $g =  t_\sigma(\bar \alpha)$
 
 Claim 1. There is a Diophantine formula $\Phi(x,\bar y,\T_n)$ with parameters $\T_n$ (here $y = (y_1, \ldots,y_m)$) such that for any elements $g,h_1, \ldots,h_m \in G$ the following equivalence holds: 
 $$
 G \models \Phi(g,\bar h,\T_n) \Longleftrightarrow  \bigwedge_{i = 1}^m h_i = t_{1n}(\alpha_i) \wedge  g = t_\sigma(\bar \alpha).
 $$
 Indeed, the subgroup $T_{1n}$ is defined by the equations in $G$. For each element $t_{1n}(\alpha_k) \in T_{1n}$ by Lemma \ref{le:connected iso} the definable map (by Diophantine formulas with parameters in $\T$) $f_{1ni_kj_k}: T_{1n} \to T_{i_kj_k}$ give  the transvection $t_{i_kj_k}(\alpha_k)$ as the image $f_{1ni_kj_k}(t_{1n}(\alpha_k))$.
 Now to finish the proof of the claim, it suffices to note that 
 $$
 g = t_\sigma(\bar \alpha) \Longleftrightarrow g = \Pi_{k=1}^m f_{1ni_kj_k}(t_{1n}(\alpha_k)).
 $$
Now consider the interpretation $F_{1n} = \Gamma(G,\T_n)$  of $F$ in $G$ given in Lemma \ref{le:int of R in G} and also the standard matrix interpretation $G^* = \Delta(F_{1n})$ of $SL_n(F)$ in $F_{1n}$ from Lemma \ref{le:inter of SL(n,R) in R}.

Claim 2. There is an isomorphism $\lambda: G \to G^*$ definable in $G$ by Diophantine formulas  with parameters $\T_n$.

Let $M(t_\sigma(\bar \alpha))$ be the matrix given by the product of transvections $t_\sigma(\bar \alpha) = \Pi_{k=1}^m f_{1ni_kj_k}(t_{1n}(\alpha_k) $. Since the sequence $\sigma$ is fixed then for every $(i,j)$ with $1\leq i\neq j \leq n$, there exists a
 polynomial $P_{ij}$ with integer coefficients such that for any $\bar \alpha \in F^m$  the entry $a_{ij}$ of the matrix $M(t_\sigma(\bar \alpha))$ is equal to $P_{ij}(\bar \alpha)$. Now we have an equivalence 
 $$
 g = t_\sigma(\bar \alpha) \Longleftrightarrow \lambda(g) = (a_{ij}) \bigwedge_{i,j = 1}^n a_{ij} = P_{ij}(\bar \alpha).
 $$
 which gives an isomorphism $\lambda$. The equality $g = t_\sigma(\bar \alpha)$ is described by the formula $\Phi(x,\bar y,\T_n)$ of claim 1. Hence, the following formula defines the isomorphism $\lambda$ (here by $\bar t_\sigma(\bar \alpha)$ we denote the sequence $(t_{i_1j_1}(\alpha_1), \ldots, t_{i_mj_m}(\alpha_m))$:
 $$
 \exists \bar t_\sigma(\bar \alpha) [\Phi(g,\bar t_\sigma(\bar \alpha), \T_n) \wedge \lambda(g) = (a_{ij}) \bigwedge_{i,j = 1}^n a_{ij} = P_{ij}(\bar \alpha)].
 $$
 Note that the defining formula is Diophantine with parameters $\T_n$.
 
 Consider now the standard interpretations $G^* = \Delta(F)$ and the interpretation $F^* = \Gamma(G^*,\T_n^*)$ from Lemma \ref{le:int of R in G}, where $\T_n^*$ is the set of transvections in the group $G^* \simeq SL_n(F)$.
 
 Claim 3. There is an isomorphism $\mu: F \to F^*$ definable in $F$ by Diophantine formulas  with parameters $\T_n^*$.
 
 We define the isomorphism $\mu: F \to F^*$ by $\mu(\alpha) = t_{1n}(\alpha)$ where  $t_{1n}(\alpha)$  is the corresponding matrix in the group  $G^*$. Hence the condition that defines $\mu(\alpha)$ in $F$ is the following:
 $$
\mu(\alpha) = (a_{ij}) \wedge a_{1n} = \alpha \bigwedge_{j=1}^n a_{ii} = 1 \bigwedge_{1\leq i\neq j\leq n, i \& (i,j) \neq (1,n)} a_{ij} =0.
$$
 
 The claims 1 and 2 show that $F$ and $G = SL_n(F)$ are bi-interpretable with parameters $\T_n$. Now we need to show that one can take any tuple of parameters from some definable subset of tuples in $G$ to bi-interpret $F$ and $G = SL_n(F)$.

 Now we need to show that the bi-interpretation of $G$ and $F$ described above is regular. Note first that the interpretations $G^* =  \Delta(F)$ and $F^* = \Gamma(\Delta(F),\T_n^*)$ are absolute (the second is absolute because the tuple $\T_n^*$ is absolutely definable in $F$. 
  Therefore, it suffices to show that instead of $\T_n$ one may use any tuple from a definable subset of tuples in $G$. 
  
  Consider the following conditions:  
  
  \begin{itemize}
  \item the interpretation $\Delta(F)$ gives a group;
  \item the interpretation $F \simeq \Gamma(G,\T_n)$ gives a field;
  \item the formula that defines $\lambda$ gives an isomorphism of groups;
  \item the formula that defines $\mu$ gives an isomorphism of fields. 
  \end{itemize}
  
  As we have seen above, all the conditions above can be described by a formula with parameters $\T_n$. Let $\Psi(\bar z)$ be the conjunction of formulas that describe the conditions above and where the parameters $\T_n$ are replaced by a tuple of new variables $\bar z$. Denote by $D$ the definable set $\{S \mid G \models \Psi(S)\}$. Observe that $G \models \Psi(\T_n)$, $D \neq \emptyset$.
  
  Let $S \in D$. Then $F_S = \Gamma(G,S)$ is a field, $G_S = \Delta(F_S)$ is the group isomorphic to $SL_n(F_S)$ (by construction of $\Delta(F_A)$). Furthermore, the formula that defines the isomorphism $\lambda:G \to G^*$  now defines some isomorphism $\lambda_S:G \to G_S$, so $SL_n(F) \simeq SL_n(F_S)$. By the abstract isomorphism theorem (see the book\cite{BMPbook}) one has $F \simeq F_S$. This shows that the interpretation $F \simeq \Gamma(G,D)$ of $F$ in $G$ is regular with respect to the definable set $D$. As we have mentioned above, the interpretation $G \simeq \Delta(F)$ is absolute and hence also regular. The formulas that define the isomorphisms $\lambda_S$ and $\mu_S$ are also uniform in $S \in D$. This shows that $G$ and $F$ are regularly bi-interpretable.

  This proves the theorem.
 
 \end{proof}
 
 \begin{rem}
 In the proof of Theorem A1. above one can replace the field $F$ with any associative commutative  unitary  ring $R$ which satisfy the following conditions:
 \begin{itemize}
     \item $R$ admits bounded elementary generation (in fact, this could be an even weaker condition);
     \item $R$ satisfies abstract isomorphism theorem for $SL_n$, $n \geq 3$ (there are many such rings,  see \cite{BMPbook}).
 \end{itemize}
  We call such rings {\it $SL$-capable}.
 \end{rem}
 
\subsection{The characterization theorem }

 \begin{thm}\label{th:elem-iso-sl}
 Let $F$ be a field, and $n\geq 3$. Then for a group $H$ the following conditions are equivalent:
 \begin{itemize}
     \item [1)] $H \equiv SL_n(F)$;
     \item [2)] $H \simeq SL_n(L)$ for some field $L$ such that $F \equiv L$
     
 \end{itemize}
 \end{thm}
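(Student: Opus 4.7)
The plan is to derive this from Theorem~A1, which supplies the regular bi-interpretability of $F$ and $G=\SLF$. The direction $2)\Rightarrow 1)$ is routine: Lemma~\ref{le:inter of SL(n,R) in R} provides a fixed tuple of formulas $\Delta$, independent of the ring, that absolutely interprets $SL_n$ in any commutative ring, so $F\equiv L$ immediately yields $SL_n(F)=\Delta(F)\equiv\Delta(L)=SL_n(L)\simeq H$.

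The harder direction $1)\Rightarrow 2)$ is a transfer argument along the elementary equivalence $H\equiv G$. I would re-use the formula $\Psi(\bar z)$, the interpretation tuple $\Gamma$, the standard interpretation $\Delta$, and the formula defining the isomorphism $\lambda$ that were constructed in the proof of Theorem~A1, together with the uniformity clause established there: for every group $K$ and every tuple $S$ in $K$ with $K\models\Psi(S)$, the structure $K_S:=\Gamma(K,S)$ is a field, and the formula defining $\lambda$, evaluated at $S$, gives an isomorphism $K\to\Delta(K_S)\simeq SL_n(K_S)$. Since $G\models\exists\bar z\,\Psi(\bar z)$ (witnessed by $\T_n$) and $H\equiv G$, I would pick a tuple $S$ in $H$ with $H\models\Psi(S)$ and set $L:=\Gamma(H,S)$; then $L$ is a field and the formula defining $\lambda$ gives an isomorphism $H\to \SL_n(L)$.

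To finish, it remains to verify $L\equiv F$. For any ring sentence $\varphi$, let $\varphi^\Gamma(\bar z)$ denote its $\Gamma$-translation into the language of groups. Regularity of the interpretation of $F$ in $G$ forces $\Gamma(G,S)\simeq\Gamma(G,\T_n)=F$ for every $S$ satisfying $\Psi$; consequently the group sentence $\forall\bar z\,(\Psi(\bar z)\to\varphi^\Gamma(\bar z))$ holds in $G$ if and only if $F\models\varphi$. By $H\equiv G$ this same group sentence holds in $H$, and since $H\models\Psi(S)$ we conclude $L=\Gamma(H,S)\models\varphi$. Hence $F\equiv L$, as required.

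All substantive work sits inside Theorem~A1; the present step is a standard extraction of a first-order classification from regular bi-interpretability. The one point requiring real care is precisely the uniformity of the formulas $\Gamma$, $\Delta$, $\lambda$ over the definable parameter set cut out by $\Psi$ rather than merely for the specific tuple $\T_n$ — this uniformity is the meaning of the word ``regular'' in ``regular bi-interpretability,'' and it is exactly what was verified in the last paragraph of the proof of Theorem~A1 (via the abstract isomorphism theorem for $SL_n$).
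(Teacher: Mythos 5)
Your proposal is correct and follows the paper's proof essentially step for step: both directions use exactly the regular bi-interpretability data $(\Psi,\Gamma,\Delta,\lambda)$ from Theorem~A1, transferring the existence of a good parameter tuple from $G$ to $H$ and extracting $L\equiv F$ from uniformity over the definable parameter set. The only difference is that you spell out the standard translation argument for $L\equiv F$ explicitly, whereas the paper simply cites \cite{KMS} for it.
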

 \begin{proof}
 We use notation from Theorem A1.
     1) $\to$ 2). Suppose that $H$ is a group such that $H \equiv SL_n(F)$.  Since $F \simeq \Gamma(G,S)$ for any tuple $S$ over $G$ such that $G \models \Psi(S)$ it follows that for any tuple $\tilde S$ in $H$ such that $H \models \Psi(\tilde S)$ the same formulas that interpret $F$ in $G$ with parameters $\T$ interpret a field $L_{\tilde S}$ in $H$ with parameters $\tilde S$, that is, $L_{\tilde S} \simeq \Gamma(H,\tilde S)$. Since this is true for any $\tilde S$ that satisfies $\Psi$ in $H$ it follows that $L_{\tilde S} \equiv F$ (see \cite{KMS}).    Since the isomorphism $\lambda_S:G \to G_S$ is definable by some formula $\theta(x,\bar y,S)$  with parameters $S$ for any $S \in D$ one finds that for any such $\tilde S$ the formula $\theta(x,\bar y,\tilde S)$ defines some isomorphism $\lambda^*_{\tilde S}: H \to H^*$, where $H^* = \Delta(\Gamma(H,\tilde S))$. By construction of $\Delta$ $H^* \simeq SL_n(\Gamma(H,\tilde S) \simeq SL_n(L_{\tilde S})$, which proves that 1) implies 2).
     
     2) $\to $ 1).  In fact, if $L \equiv F$ then $SL_n(F) \simeq SL_n(L)$ since $SL_n(F)$ and $SL_n(L)$ are interpretable, respectively, in $F$ and $L$ by the same formulas without parameters.

 \end{proof}

\section{Groups first-order equivalent to $\TF$}\label{TO:sec}
In this section $F$ is an infinite field of characteristic $\neq 2$, unless otherwise stated.

To keep the notation simple in this section by $\T$ we denote the set of all upper triangular transvections, $t_{ij}$, $i>j$. Let $\T^*$ denote $\T \cup \{d_i:i=1, \ldots, n\}$. 

\begin{lemma}\label{tnomainbiinter:prop} Let $H$ be the subgroup of $\TF$ generated by $\UTF$ and all the $d_i=d_i(-1)$, $i=1,\ldots ,n$, where $F$ is a characteristic zero field. Then, the field $F$ is regularly bi-interpretable with $H$.
    \end{lemma}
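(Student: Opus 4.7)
My plan is to transcribe the proof strategy of Theorem A1 to the present setting. Write $H = \UTF \cdot A$ with $A = \langle d_1, \ldots, d_n\rangle \cong (\Z/2\Z)^n$ (the finite $2$-torsion diagonal, complementary to the unipotent radical), and take the parameter tuple $\T^* = \T \cup \{d_1,\ldots,d_n\}$. I will (i) interpret $F$ in $H$ with parameters $\T^*$, (ii) absolutely interpret $H$ in $F$ as a Diophantine set of matrices, (iii) write down back-and-forth isomorphisms definable with the appropriate parameters, and (iv) bundle the needed axioms into a single formula $\Psi$ to upgrade to a regular bi-interpretation.

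Two definability facts will do most of the work. First, $\UTF$ is absolutely definable in $H$ as the set of squares: for $y = u a \in H$ with $u \in \UTF$ and $a \in A$ one has $y^2 = u(aua^{-1})a^2 = u(aua^{-1}) \in \UTF$ since $a^2 = 1$, and conversely, in characteristic zero $\UTF$ is uniquely divisible via the exponential correspondence with $\fra{u}_n(F)$, so every element of $\UTF$ has a square root inside $\UTF \subseteq H$. Second, each upper transvection subgroup $T_{ij}$ with $1 \leq i < j \leq n$ is Diophantine in $H$ with parameters $\{d_k : k \notin \{i,j\}\}$:
\[
T_{ij} \;=\; \UTF \;\cap\; \bigcap_{k \notin \{i,j\}} C_H(d_k),
\]
a direct consequence of the formula $d_k^{-1} t_{pq}(\beta) d_k = t_{pq}((-1)^{\delta_{pk}+\delta_{qk}}\beta)$ coming from (\ref{eq:d-t}): an element of $\UTF$ centralizes every $d_k$ with $k \notin \{i,j\}$ iff all its transvection coordinates are supported at positions $(p,q) \subseteq \{i,j\}$, leaving only the $(i,j)$-position.

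With the $T_{ij}$ in hand I will interpret $F$ on $T_{1n}$ exactly as in Lemma \ref{le:int of R in G}: addition is group multiplication on $T_{1n}$, and multiplication is obtained by fixing some $1 < k < n$ and setting $t_{1n}(\alpha) \otimes t_{1n}(\beta) = [x_1,y_1]$, where $x_1 \in T_{1k}$ and $y_1 \in T_{kn}$ are the unique elements satisfying $[x_1, t_{k,n}(1)] = t_{1n}(\alpha)$ and $[t_{1,k}(1), y_1] = t_{1n}(\beta)$. Since $1 < k < n$, the auxiliary parameters $t_{1,k}(1)$ and $t_{k,n}(1)$ are upper transvections and therefore lie in $\T \subseteq \T^*$, so no lower transvections (which do not belong to $H$) are ever invoked. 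In the other direction $H$ is absolutely interpreted in $F$ as the Diophantine subset of $F^{n^2}$ consisting of upper-triangular matrices $(x_{ij})$ with diagonal entries satisfying $x_{ii}^2 = 1$. The definable isomorphism $\lambda : H \to \Delta(\Gamma(H,\T^*))$ is then assembled from the uniform length-$\binom{n}{2}+n$ decomposition
\[
h \;=\; \prod_{1 \leq i < j \leq n} t_{ij}(\alpha_{ij}) \cdot \prod_{i=1}^{n} d_i^{\varepsilon_i}, \qquad \alpha_{ij} \in F,\ \varepsilon_i \in \{0,1\},
\]
with the finite branching over the $2^n$ sign patterns $(\varepsilon_1,\ldots,\varepsilon_n)$ absorbed into a finite disjunction in the defining formula, while the converse isomorphism $\mu : F \to \Gamma(\Delta(F),\T^{**})$ is written absolutely as in Theorem A1.

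Finally, for regularity I will package into a single formula $\Psi(\bar z)$ all the first-order assertions that $\bar z$ names the parameters of a valid bi-interpretation --- namely, that $\Gamma(H,\bar z)$ is a field, $\Delta$ delivers a group, and $\lambda_{\bar z},\mu_{\bar z}$ are isomorphisms --- so the definable set $D = \{\bar z : H \models \Psi(\bar z)\}$ contains $\T^*$ and witnesses regular bi-interpretability. The main obstacle I anticipate is the last move of the regularity argument, exactly as in Theorem A1: one needs an abstract-isomorphism statement guaranteeing that any group isomorphism between two copies of $H$ forces an isomorphism of the underlying fields. I expect this to follow by combining the canonical recoverability of $\UTF$ inside $H$ (obtained in the first definability step) with the classical isomorphism theory for unitriangular groups from \cite{beleg94,beleg99,BMPbook}, but cleanly assembling this into the regularity scheme is the main technical point; the definability calculations and the Steinberg manipulations themselves are routine once the setup above is in place.
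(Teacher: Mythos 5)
Your proposal is correct and arrives at the same bi-interpretation as the paper, but the definability layer is built by a genuinely different and more self-contained route. The paper outsources the Diophantine structure of $H$ to the machinery of \cite{MS3} via Theorem~\ref{th:Rinterp} and Proposition~\ref{pr:inter2}, and then patches the one troublesome case $j=i+1$ with the commutator trick $T_{i,i+1}=[d_i,T'_{i,i+1}]$ where $T'_{i,i+1}=R_HT_{i,i+1}T_{1n}$. You instead exploit the specific structure of $H=\UTF\rtimes A$ with $A=\langle d_1,\dots,d_n\rangle$ elementary abelian of exponent $2$: since $a^2=1$ for $a\in A$ and $\UTF$ is uniquely divisible in characteristic zero, the set of squares in $H$ is exactly $\UTF$, giving a parameter-free Diophantine definition of the unipotent radical; and the identity $d_k^{-1}t_{pq}(\beta)d_k=t_{pq}((-1)^{\delta_{pk}+\delta_{qk}}\beta)$ from (\ref{eq:d-t}) then pins down each $T_{ij}$ as $\UTF\cap\bigcap_{k\notin\{i,j\}}C_H(d_k)$ (correct for $n\ge3$ and char $\ne 2$). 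Your route is more elementary, makes the use of the characteristic-zero hypothesis fully explicit, and does not require re-deriving the MS3 large-subgroup lemmas in the $T_n$ setting; the paper's route is terser and stays uniform with the rest of the paper's Diophantine toolkit. Both then converge on the same uniform normal form (you place the diagonal factor on the right, the paper on the left — equivalent since $\UTF\lhd H$, with the $2^n$ sign patterns absorbed into a finite disjunction), the same matrix re-interpretation of $H$ inside $F$, and the same Theorem~A1 template for regularity, whose essential external input — an abstract isomorphism statement forcing $F\cong F_S$ from $\UTF\cong UT_n(F_S)$ — you identify correctly, and which the paper implements in the sibling Lemma~\ref{tnmainbiinter2:lem} by passing through the derived subgroup $H'=\UTF$.
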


\begin{proof} By Theorem~\ref{th:Rinterp}, $F$ is e-interpretable in $H$ and by (an easy generalization) of the proof of Proposition~\ref{pr:inter2}, $H$ is e-interpretable in $F$. All the $T_{ij}$ are in Diophantine in $H$, with the possible exception, when $j=i+1$, where $T_{i,i+1}'=R_HT_{i,i+1}T_{1n}$ is so. Since $n\geq 3$, either $i\neq 1$ or $i+1\neq n$. Assume without loss of generality that $i\neq 1$. Then $T_{i,i+1}= [d_i, T'_{i,i+1}]$. Hence, $T_{i,i+1}$ is also Diophantine in $H$.

Note that there is a unique sequence $(i_m,j_m)$, $1\leq i_m < j_m \leq n$, $m=1, \ldots, n(n-1)/2$, such that for any $g$ in $H$, there is a unique sequence $x$ in $\langle d_1, \ldots, d_n \rangle$, and a unique sequence of $\alpha_k\in F$, such that
$$g=xt_{i1,j1}(\alpha_1)\cdots t_{im,jm}(\alpha_m)$$
The rest of the proof mimics the proof of Theorem A1.
\end{proof}

\begin{lemma}\label{Dn-Dio:lem}The subgroup $D_n(F)$ is Diophantine in $\TF$.\end{lemma}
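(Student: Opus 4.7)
The plan is to exhibit $D_n(F)$ as the centralizer in $T_n(F)$ of the finite parameter set $\{d_1,\ldots,d_n\} \subseteq \T^*$, where $d_i = d_i(-1)$. Concretely, I claim
$$D_n(F) \;=\; \bigl\{\, x \in T_n(F) \;\bigm|\; [x,d_i] = 1 \text{ for } i = 1,\ldots,n \,\bigr\}.$$
This defining condition is a finite conjunction of equations in the parameters $d_1,\ldots,d_n$, hence Diophantine; so once I establish the equality I am done.

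One inclusion is immediate: every diagonal matrix commutes with every other diagonal matrix. For the reverse, I would argue entry by entry using the conjugation rule (\ref{eq:d-t}). Write $g = (g_{kl}) \in T_n(F)$. A direct computation from (\ref{eq:d-t}) shows that the $(k,l)$-entry of $d_i^{-1} g\, d_i$ equals $g_{kl}$ whenever $i \notin \{k,l\}$ or $k=l=i$, and equals $-g_{kl}$ whenever exactly one of the indices $k,l$ equals $i$. Thus $[g,d_i] = 1$ forces $g_{il} = -g_{il}$ for every $l \neq i$ and $g_{ki} = -g_{ki}$ for every $k \neq i$. Because $\mathrm{char}\, F \neq 2$, the element $2$ is invertible and these entries must vanish; letting $i$ range over $1,\ldots,n$ kills every off-diagonal entry of $g$, which means $g \in D_n(F)$.

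I do not expect a serious obstacle here. The only sensitive hypothesis is $\mathrm{char}\, F \neq 2$, which is precisely what makes conjugation by $d_i(-1)$ act as a nontrivial sign involution on the off-diagonal entries touching row or column $i$; it is exactly the blanket assumption already in force at the top of Section~\ref{TO:sec}. The upper-triangular hypothesis plays no role in the entry-wise calculation, so the same formula cuts $D_n(F)$ out of $GL_n(F)$ as well, in line with Proposition~\ref{le:D}.
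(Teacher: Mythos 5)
Your proof is correct and is essentially the paper's argument: the paper states the same characterization $x\in D_n(F) \Leftrightarrow \bigwedge_{i=1}^n [x,d_i]=1$ and calls it clear, while you supply the entry-wise verification using (\ref{eq:d-t}) and the $\mathrm{char}\,F\neq 2$ hypothesis. Your observation that the triangularity of $x$ is irrelevant and that the same formula defines $D_n(F)$ in $GL_n(F)$ is also accurate and consistent with Proposition~\ref{le:D}.
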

\begin{proof} Clear, by noting that
$$x\in D_n(F) \Leftrightarrow \bigwedge_{i=1}^n [x,d_i]=1$$\end{proof}
\begin{lemma} \label{tnmainbiinter2:lem}The quotient group $P\TF=\TF/Z(\TF)$ is regularly bi-interpretable with $F$ with respect to the constants $\bar{\T^*}$, the images of the constants $\T^*$ in $P\TF$ under the canonical epimorphism.\end{lemma}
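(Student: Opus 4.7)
The plan is to mirror the bi-interpretation argument of Theorem~A1 and Lemma~\ref{tnomainbiinter:prop}, adapting each step to the quotient $P\TF=\TF/Z(\TF)$. Concretely I would (i) e-interpret $F$ in $P\TF$ using parameters $\bar{\T^*}$; (ii) e-interpret $P\TF$ in $F$ via upper triangular matrices modulo scalar equivalence; (iii) build definable isomorphisms $\lambda$ and $\mu$ witnessing the bi-interpretation; and (iv) deduce regularity exactly as in the final paragraph of the proof of Theorem~A1, by encoding all the needed properties into one formula $\Psi(\bar z)$ and letting $\bar z$ vary over a definable set of parameter tuples.

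For step (i), each transvection subgroup $T_{ij}$ with $i\neq j$ meets $Z(\TF)$ trivially, so the canonical projection $\pi:\TF\to P\TF$ restricts to an isomorphism $T_{ij}\to\overline{T_{ij}}$; pushing Theorem~\ref{le:Tkl} through $\pi$ then shows that $\overline{T_{ij}}$ is Diophantine in $P\TF$ with parameters $\bar{\T^*}$, and the field structure is installed on $\overline{T_{1n}}$ via $\overline{t_{1n}(\alpha)}\oplus\overline{t_{1n}(\beta)}=\overline{t_{1n}(\alpha+\beta)}$ and $\overline{t_{1n}(\alpha\beta)}=[\overline{t_{12}(\alpha)},\overline{t_{2n}(\beta)}]$, exactly as in Case~3 of Theorem~\ref{th:Rinterp}. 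Step (ii) is the standard matrix interpretation modulo scalars, which is clearly e-interpretable in $F$. For step (iii), every coset $\bar g\in P\TF$ has a unique normal representative of the form $g=d_1(\beta_1)\cdots d_{n-1}(\beta_{n-1})\,t_{i_1j_1}(\alpha_1)\cdots t_{i_m j_m}(\alpha_m)$, obtained by normalizing the $(n,n)$-entry of a lift of $\bar g$ to $1$ with the ordering of transvections fixed. The matrix entries are fixed polynomials in the $\beta_k,\alpha_l$, so $\lambda:P\TF\to\Delta(\Gamma(P\TF,\bar{\T^*}))$ can be defined by these polynomial formulas, mirroring Claim~2 in the proof of Theorem~A1; the inverse isomorphism $\mu$ is analogous to Claim~3 there.

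\textbf{Main obstacle.} The only genuinely new difficulty, compared with Lemma~\ref{tnomainbiinter:prop}, is that the diagonal quotient $\overline{D_n(F)}\cong(F^\times)^{n-1}$ is infinite, so the finitely many involutions $\bar d_i=\overline{d_i(-1)}\in\bar{\T^*}$ no longer generate it; hence I need $\overline{d_i(\alpha)}$ to be definable uniformly in $\alpha$ (interpreted in $\Gamma(P\TF,\bar{\T^*})$). The plan is to characterize $\overline{d_i(\alpha)}$ as the unique element $\bar x\in P\TF$ whose conjugation on each image $\overline{t_{jk}}$ (with $j<k$) matches the scaling prescribed by formula~(\ref{eq:d-t}). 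Existence is clear from the lift $d_i(\alpha)$, and uniqueness reduces to showing that the only element of $P\TF$ whose conjugation acts trivially on every $\overline{t_{jk}}$ is the identity; this in turn follows from the standard fact $C_{\TF}(UT_n(F))=Z(\TF)$ together with a trace argument using $n\geq 3$ and $\mathrm{char}(F)\neq 2$ (any $x\in\TF$ with $uxu^{-1}=c(u)\,x$ for scalar $c(u)$ and all $u\in UT_n(F)$ has $c(u)=1$ whenever $\mathrm{tr}(x)\neq 0$, and the trace-zero case is excluded by selecting additional conjugation constraints). Once this uniform parametrization of $\overline{d_i(\alpha)}$ by $\alpha$ is in place, the rest of the proof is a routine transcription of the arguments in Theorem~A1 and Lemma~\ref{tnomainbiinter:prop}, invoking the abstract isomorphism theorem for $P\TF$ to close the regularity step.
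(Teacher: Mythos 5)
Your overall plan --- mirroring Theorem~A1 with $F$ e-interpreted on $\overline{T_{1n}}$, the standard matrix interpretation modulo scalars going the other way, a definable normal form for cosets, and regularity by bundling everything into one formula $\Psi(\bar z)$ --- is exactly the paper's approach. The paper avoids working with cosets by identifying $P\TF$ with the concrete subgroup $K_n(F)=\UTF\rtimes B_n\leq\TF$, where $B_n=\prod_{i=1}^{n-1}d_i(\FM)$; your ``normalize the $(n,n)$-entry of a lift to~$1$'' is exactly this $B_n$-splitting in disguise. However, the key step in your ``main obstacle'' paragraph contains a genuine error. You invoke as a ``standard fact'' that $C_{\TF}(\UTF)=Z(\TF)$, but this is false: the correct centralizer is $Z(\TF)\cdot T_{1n}$. (Already for $n=3$, any $d(a)\,t_{13}(q)$ commutes with every unitriangular matrix, since $T_{13}=Z(UT_3(F))$ and scalar matrices are central.) Consequently, an element of $P\TF$ whose conjugation action on all $\overline{t_{jk}}$ agrees with that of $\overline{d_i(\alpha)}$ is determined only modulo $\overline{T_{1n}}$, so your conjugation criterion does not single out a unique $\bar x$. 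The trace condition you sketch does not repair this: conjugation by a unitriangular matrix does not multiply a general element of $\TF$ by a scalar, so the hypothesis ``$uxu^{-1}=c(u)x$'' never arises, and the subsequent discussion of $\mathrm{tr}(x)\neq 0$ has nothing to attach to.

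The fix is what the paper does: before imposing any conjugation conditions, first restrict $\bar x$ to the (Diophantine) image of $D_n(F)$ --- equivalently, in the $K_n$-picture, to $B_n$, definable by $x\in B_n\Leftrightarrow\bigwedge_{i=1}^{n-1}[x,d_i]=1$. Inside $B_n$ the subgroup $T_{1n}$ is invisible, and the simpler conditions $x\in B_n\wedge\bigwedge_{i\neq k,\ j\neq k}[x,t_{ij}]=1$ already pick out $d_k(\FM)$ exactly; the Diophantine bijection with the multiplicative structure of the interpreted field is then just $x\mapsto xt_{ij}x^{-1}$ (no recourse to formula~(\ref{eq:d-t}) or uniqueness arguments is needed). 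With this amendment your argument goes through, and the regularity step closes as the paper's does: any abstract isomorphism $K_n(F)\to K_n(F_S)$ carries the derived subgroup $\UTF$ onto $UT_n(F_S)$, from which $F\cong F_S$ follows.
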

\begin{proof} Recall that $Z(G)$, where $G=\TF$, is a direct factor of $D_n(R)$, that is, there is a subgroup, say $B_n$ of $D_n$ such that $D_n=B_n \times Z(G)$. For example, we might set
$\ds B_n=\prod_{i=1}^{n-1} d_i(\FM).$ 
Let us fix the subgroup $B_n$ as defined above. To emphasize the dependency on the field $F$, we also refer to this subgroup as $B_n(F)$. Since $$G=\TF= \UTF \rtimes D_n =\UTF \rtimes (B_n \times Z(G))= (\UTF \rtimes B_n) \times Z(G) $$ clearly, $PT_n(F)\cong \UTF \rtimes B_n$. We prove the statement for $$K_n(F)=\UTF \rtimes B_n$$ where the constants are from $\T^*\setminus \{d_n\}$. We note that $$d_1\cdots d_{n-1}Z(G)= d_n^{-1}Z(G)=d_nZ(G)$$ Hence, the inclusion of all the constants $d_iZ(G)$ in the statement is not necessary. 

We proceed by proving a few claims.

\emph{Claim 1.} The subgroup $B_n$ and all the subgroups $T_{ij}$ are Diophantine in $K_n$. 

\emph{Proof of the claim.} For $B_n$ simply observe that $x\in B_n \Leftrightarrow \bigwedge_{i=1}^{n-1} [x,d_i]=1$. For $T_{ij}$, note that one simply does not need the constant $d_n$ in any of the formulas in the proof of Lemma~\ref{tnomainbiinter:prop}. In fact, any $n-1$ number of distinct $d_i$'s suffices. \qed

\emph{Claim 2.} Each subgroup $d_k(\FM)$, $k=1, \ldots, n-1$ is Diophantine in $K_n$.

\emph{Proof of the claim.} Note that:
$$\ds x\in d_k(\FM) \Leftrightarrow (x \in B_n \wedge \bigwedge_{i\neq k, j\neq k}[x,t_{ij}]=1)$$ In addition, we observe that $Z(K_n)=\one$. \qed

\emph{Claim 3.} For each $1\leq i<j\leq n$, the map $$f_{ij}: d_i(\FM) \to T_{ij}^\times, \quad d_i(\alpha)\mapsto t_{ij}(\alpha), \alpha \in \FM$$
when $T_{ij}$ is regarded as a ring, is Diophantine.

\emph{Proof of the claim.}
Take $x=d_i(\alpha)$ for some $\alpha \in \FM$, then $$y=f_{ij}(x)\Leftrightarrow y= xt_{ij}x^{-1}$$ \qed

We note that $K_n$ is still a group of matrices and given the claims above one can basically repeat the proof of Theorem A1. here. In the step to prove regularity, we note that abstract isomorphisms of groups of type $K_n$ will map the derived subgroup $(K_n(F))'=UT_n(F)$ onto the derived subgroup of $(K_n(F_s))'=UT_n(F_s)$ and then $F\cong F_S$ follows.
\end{proof}

\begin{corollary}\label{TRF:cor} If $H$ is a group and $H\equiv \TF$, then 
$$\frac{H}{Z(H)}\cong PT_n(L)$$ for some field $L\equiv F$.\end{corollary}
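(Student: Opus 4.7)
The plan is to combine Lemma~\ref{tnmainbiinter2:lem} with the fact that the center is first-order definable, using the standard transfer principle for regular bi-interpretability described in the introduction.

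First I would observe that $Z(T_n(F))$ is absolutely definable in $T_n(F)$ by the formula $\forall y\,([x,y]=1)$, so the quotient $PT_n(F) = T_n(F)/Z(T_n(F))$ is absolutely interpretable in $T_n(F)$ via the standard quotient interpretation $\Delta$: the underlying set is the definable set $T_n(F)$ modulo the definable equivalence relation $xy^{-1}\in Z(T_n(F))$, with the induced group operation. Because this interpretation uses no parameters and the defining formulas are fixed, applying $\Delta$ to any group $H$ produces the quotient $H/Z(H)$.

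Next, from $H \equiv T_n(F)$ and the fact that interpretations preserve elementary equivalence, I would deduce
\[
H/Z(H) \;=\; \Delta(H) \;\equiv\; \Delta(T_n(F)) \;=\; PT_n(F).
\]
At this point, the problem reduces to the following: given a group $M$ with $M \equiv PT_n(F)$, find a field $L\equiv F$ with $M \cong PT_n(L)$. This is exactly the conclusion delivered by the regular bi-interpretability established in Lemma~\ref{tnmainbiinter2:lem}, applied in the standard way outlined in Section~1.1: if $\mathcal A$ and $\mathcal B$ are regularly bi-interpretable via $\Gamma,\Delta$ and $\mathcal A_1 \equiv \mathcal A$, then the interpretation $\Gamma$ produces a structure $\mathcal B_1 \equiv \mathcal B$ (where parameters are chosen satisfying the regularity formula $\Psi$, which is realized in $\mathcal A_1$ since it is realized in $\mathcal A$), and the bi-interpreting isomorphism $\lambda$, definable uniformly in those parameters, shows $\mathcal A_1 \cong \Delta(\mathcal B_1)$.

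Applying this to $\mathcal A = PT_n(F)$, $\mathcal B = F$, $\mathcal A_1 = H/Z(H)$, we obtain a field $L \equiv F$ such that $H/Z(H) \cong PT_n(L)$, which is the desired conclusion. The only step requiring any care is checking that the quotient interpretation $\Delta$ above really coincides with the one used in Lemma~\ref{tnmainbiinter2:lem} (so that the realizations of $\Psi$ transfer correctly); this is immediate since the center is absolutely definable and the bi-interpretation of Lemma~\ref{tnmainbiinter2:lem} is stated directly for $PT_n(F)$. No new computations are needed beyond invoking the regularity machinery; the substantive work was done in establishing the bi-interpretability itself.
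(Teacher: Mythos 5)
Your proof is correct and follows essentially the same route as the paper's: the paper's proof simply cites Lemma~\ref{tnmainbiinter2:lem} together with ``an argument similar to that of proof of Theorem~\ref{th:elem-iso-sl}'', which is exactly the two-step argument you spell out — first passing to $H/Z(H) \equiv PT_n(F)$ via the absolute (parameter-free) quotient interpretation, and then applying the regular bi-interpretability transfer to produce the field $L\equiv F$ and the isomorphism $H/Z(H)\cong PT_n(L)$.
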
 
\begin{proof}
This follows from Lemma~\ref{tnmainbiinter2:lem} and an argument similar to that of proof of Theorem~\ref{th:elem-iso-sl}.
\end{proof}

\subsection{Non-standard tori and abelian deformations of $\TR$} \label{abdef:sec}
For preliminaries on extension theory and the notation used here we refer the reader to~\cite{MS2} for further details  

Consider $\TR$ and the torus $D_n(R)$. The subgroup $D_n(R)$ is a direct product $(\RM)^n$ of $n$ copies of the multiplicative group of units $\RM$ of $R$. The center $Z(G)$ of $G$ consists of diagonal scalar matrices $Z(G)=\{\alpha \cdot \one: \alpha\in \RM\}\cong \RM$, where $\one$ is the identity matrix. Recall that $Z(G)$ is a direct factor of $D_n(R)$, i.e. there is a subgroup, say $B_n$ of $D_n$ such that $D_n=B_n \times Z(G)$. For example, one might set
$$B_n=\prod_{i=1}^{n-1} d_i(\RM)$$
In the sequel we fix the subgroup $B_n$ as defined above. To emphasize the dependency on the ring $R$, we also refer to this subgroup as $B_n(\RM)$.

First, we define a new abelian group $D_n(R,f,z)$, where $f\in S^2(B_n(R),Z)$ as the abelian extension:
$$1\rightarrow Z\rightarrow D_n(R,f,Z)\rightarrow B_n(R) \rightarrow 1$$
In our applications, typically $Z\cong Z(G)$ is elementarily equivalent to $\RM$ but not necessarily isomorphic to it, the group $G$ to be constructed below. If $Z\cong \RM$ we simply denote $D_n(R,f,Z)$ by $D_n(R,f)$.


We are now ready to define the abelian deformations $T_n(R,f,Z)$.

Recall that $G=\TR=\UTR\rtimes_\psi D_n$, where $\psi: D_n \to Aut(\UTR)$, describes the action of $D_n$ on $\UTR$ by conjugation and consider the monomorphism $\phi: D_n(R)/Z(G) \to Aut(\UTR)$ induced by $\psi$. Since $D_n(R)/Z(G)\cong D_n(R,f,Z)/Z$, we can assume that $\phi: D_n(R,f,Z)/Z \to Aut(\UTR)$. Clearly, elements of $D_n(R,f,Z)$ can be written as a product $g=(z,b)$, where $z\in A$ and $b$ is any representative of $gZ$ in $D_n(R,Z,f)$. Now define:
$$T_n(R,f,Z)\define \UTR \rtimes D_n(R,f,Z)$$
where the product is defined by:
$$(u_1,(z_1,b_1))(u_2,(z_2,b_2))= (u_1u_2^{\phi(b_1)},(z_1z_2f(b_1,b_2),b_1b_2))$$
It is not very hard to show that this is indeed a well-defined group product and $Z= Z(T_n(R,f,Z))$. 

\begin{rem}\label{cocycles:rem} Let $G=T_n(R,f,Z)$, where $Z=Z(G)$. Note that since $$\ds Ext((\RM)^{n-1},Z)\cong \prod_{i=1}^{n-1} Ext(\RM,Z)$$ $f$ is cohomologous to $\ds \prod_{i=1}^{n-1} f_i$ for some $f_i\in S^2(\RM,Z)$.
So, each 2-cocycle $f_i$, $i=1, \ldots , n-1$, defines a subgroup
$$\Delta_i(\RM)= d_i(\RM) \cdot Z$$ as an abelian extension of $Z$ by $\Delta_i(\RM)/Z\cong \RM$. For $\alpha,\beta\in \RM$, 
\[f(\alpha,\beta)=f_1(\alpha,\beta)\cdots f_{n-1}(\alpha,\beta)\]
and assume that $f_n\in S^2(\RM,Z)$ defines the subgroup $\Delta_n(\RM)$ as an extension of $Z$ by $\RM$. Since $d_1(\alpha) \cdots d_n(\alpha) \in Z(G)$, for $\alpha \in \RM$ by abuse of notation we write it as $\diag(\alpha)$. Hence, there exists a function $\diag: \RM \to Z$ such that 
\begin{align*}
diag(\alpha)diag(\beta)&= d_1(\alpha)\cdots d_n(\alpha)d_1(\beta)\cdots d_n(\beta)\\
&=d_1(\alpha)d_1(\beta)\cdots d_n(\alpha)d_n(\beta)\\
&=d_1(\alpha\beta)f_1(\alpha,\beta)\cdots d_{n-1}(\alpha\beta)f_{n-1}(\alpha,\beta)d_n(\alpha\beta)f_n(\alpha,\beta)\\
&= diag(\alpha\beta))f(\alpha,\beta)f_n(\alpha,\beta)
\end{align*}
Hence 
$f_n$ and $f^{-1}$ are cohomologous, or simply we write $f_n\equiv f^{-1}$, $``\equiv"$ not to be mistaken with elementary equivalence here.

\end{rem}

\subsection{Characterization Theorem} 

{\bf Theorem A3.} {\it Assume that $F$ is an infinite field of characteristic $\neq 2$. If $H$ is any group $H\equiv \TF$, then $H\cong T_n(L,f,Z)$ for some field $L\equiv F$, 2-cocycle $f\in S^2(B_n(L),Z)$ and an abelian group $Z=Z(H)\equiv Z(\TF) \cong \FM$ .}
\begin{proof}
Let $G= \TF$. The quotient group $G/Z(G)$ is absolutely interpretable in $G$ and the same formulas that interpret $G/Z(G)$ in $G$, interpret $H/Z(H)$ in $H$. Hence, by Corollary~\ref{TRF:cor}, 
$$H/Z(H) \cong PT_n(L)$$ 
for some $L\equiv F$. Clearly, $Z(H)\equiv  \FM \equiv L^\times$. 

The fact that $D_n \cap G' =\one$ is uniformly expressible by $\Lgroups$-formulas. Now moving to $H$, the same formulas define a subgroup $E_n=E_n(\bar{e})$ of $H$, so that $E_n(\bar{e})/Z(H)\cong (L^\times)^{(n-1)}$ and $Z(H)\equiv L^\times$, $H'\cap E_n=\one$ and the action of $E_n$ on $H'\cong UT_n(L)$ is the action of the action of $D_n$ on the $t_{ij}\in \UTL$ extended to all of $\UTL$. This shows that $H\cong  \UTL \rtimes E_n$, where $E_n\cong D_n(L,f,Z)$, where $Z=Z(H)$, and some $f\in S^2(B_n(L), Z)$.


 \end{proof}
\subsection{Sufficiency of the characterization } \label{subsec:suff-Tn}

In this section, we prove a sufficiency statement for the class of fields where $\FM$ has a finite maximal torsion subgroup $T$. As $T$ is a product of finite cyclic groups, it is known that $\FM$ splits over $T$. 

We will need to state a few well-known definitions and results.

Let $B$ be an abelian group and $A$ a subgroup of $B$. Then $A$ is called a \emph{pure subgroup of $B$} if $\forall n\in \mbb{N}$, $nA=nB\cap A$.
\begin{lemma}\label{pure:lem}Let $A\leq B$ be abelian groups such that the quotient group $B/A$ is torsion-free. Then $A$ is a pure subgroup of $B$.\end{lemma}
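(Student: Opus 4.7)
The plan is to verify the definition directly from the torsion-freeness hypothesis; the statement is a classical folklore fact and essentially unfolds in one move, so the ``main obstacle'' is really just bookkeeping.

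Fix $n\in\mbb{N}$. The inclusion $nA\subseteq nB\cap A$ is immediate, since $nA\subseteq A$ and $nA\subseteq nB$. For the reverse inclusion, I would take an element $a\in nB\cap A$ and write $a=nb$ with $b\in B$. The point is to show that $b$ can be replaced by an element of $A$. Passing to the quotient $B/A$, the coset $b+A$ satisfies $n(b+A)=nb+A=a+A=0+A$, because $a\in A$. Thus $b+A$ is an $n$-torsion element of $B/A$. Since $B/A$ is torsion-free by hypothesis, $b+A=0+A$, i.e.\ $b\in A$, and therefore $a=nb\in nA$. This gives $nB\cap A\subseteq nA$, completing the verification.

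The only case that needs a separate comment is $n=0$, where the torsion-free argument is vacuous; but then $nA=\{0\}=nB\cap A$ trivially, so the equality still holds. No further structure theory of $B/A$ is needed: the argument uses nothing beyond the definition of the quotient group and the definition of purity. In particular, the proof carries over verbatim in either additive or multiplicative notation, which matters here because later the lemma will be applied to subgroups of $F^\times$ written multiplicatively.
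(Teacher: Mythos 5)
Your proof is correct and is the standard argument one would give: the paper itself states Lemma \ref{pure:lem} without proof (treating it as folklore), and what you wrote is precisely the one-line verification the authors have in mind, including the observation that the $n=0$ case is degenerate but harmless under the definition of purity used in the paper. No gaps, and the remark about additive versus multiplicative notation is apt since the lemma is later applied to $F^\times$.
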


An abelian group $A$ is called \emph{pure-injective} if $A$ is a direct summand in any abelian group $B$ that contains $A$ as a pure subgroup.

The following theorem expresses a connection between pure-injective groups and uncountably saturated abelian groups.
\begin{thm}[\cite{eklof}, Theorem 1.11]\label{ekthm} Let $\kappa$ be any uncountable cardinal. Then any $\kappa$-saturated abelian group is pure-injective.\end{thm}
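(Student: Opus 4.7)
The plan is to exploit the standard equivalent reformulation of pure-injectivity in module-theoretic terms and then realize the key partial type by saturation. Recall the Łoś--Fuchs--Warfield characterization: an abelian group $A$ is pure-injective if and only if it is \emph{algebraically compact}, i.e.\ every system
\[
\Sigma \;=\; \Bigl\{\, \textstyle\sum_{j\in J_i} n_{ij}\, x_j \,=\, a_i \;:\; i\in I \,\Bigr\},
\]
with $n_{ij}\in\Z$, each $J_i$ finite, and parameters $a_i\in A$, that is finitely solvable in $A$ is actually solvable in $A$. Each individual equation is a positive primitive ($pp$) formula in finitely many of the variables $x_j$, and finite solvability is exactly finite satisfiability of $\Sigma$ as a partial type over the parameter set $\{a_i:i\in I\}\subseteq A$.

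The core step is then almost immediate from saturation. The first thing I would do is treat the ``small'' case: if $|I|<\kappa$ and the number of variables appearing in $\Sigma$ is $<\kappa$, then $\Sigma$ is a partial type over a set of parameters of cardinality $<\kappa$; finite solvability gives finite satisfiability, and $\kappa$-saturation realizes it, producing a solution in $A$.

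The remaining, and main, obstacle is the reduction of arbitrarily large systems to $<\kappa$-sized ones. This is where one uses that, for abelian groups (more generally, modules), algebraic compactness is already implied by countable algebraic compactness — systems indexed by arbitrary sets can be solved provided every countable subsystem can. Several proofs of this are available in the literature (Mycielski's reduction, or a direct inverse-limit argument using the vanishing of $\varprojlim^{1}$ on the countable approximants); the cleanest route is to note that any finitely solvable system is the directed union of its finitely solvable countable subsystems, each of which is solvable in $A$ by the small case above (since $\kappa\geq\aleph_1$), and to glue the solutions using compactness of $pp$-type spaces. Since $\kappa$ is uncountable, the small-case step handles countable systems, and the reduction then finishes the proof.

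Putting this together: $A$ is countably algebraically compact by $\aleph_1$-saturation (a consequence of $\kappa$-saturation with $\kappa>\aleph_0$), hence algebraically compact, hence pure-injective. The only delicate point is the countable-to-arbitrary reduction; the saturation part is formal. I would cite this reduction rather than reprove it, since it is the standard ingredient in virtually every proof of the Eklof theorem.
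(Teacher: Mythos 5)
The paper does not prove this theorem; it cites it directly from Eklof--Fischer, so there is no argument in the text to compare against. Your proposal is nonetheless a correct and recognizable proof along standard lines: finitely satisfiable systems of linear equations over $A$ are finitely satisfiable partial $pp$-types, $\kappa$-saturation with $\kappa$ uncountable realizes all such types over countable parameter sets in countably many variables, and the reduction from arbitrary systems to countable ones then yields algebraic compactness, hence pure-injectivity.

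Two remarks. First, the reduction you invoke is the real content of the argument and should be cited carefully: for a module over a ring $R$ with $|R|\leq\aleph_0$, algebraic compactness is equivalent to solvability of finitely solvable systems of countably many equations in countably many unknowns (see e.g.\ Fuchs, \emph{Infinite Abelian Groups} I, or Prest, \emph{Model Theory and Modules}). Your phrase ``glue the solutions using compactness of $pp$-type spaces'' is not a self-contained proof of this reduction --- different countable subsystems may have incompatible solutions, and the gluing is precisely what the cited theorem achieves --- but since you explicitly defer to the reference, this is acceptable. Second, note that the original Eklof--Fischer argument most likely proceeds structurally, via the Szmielew classification: an $\aleph_1$-saturated model of a complete theory of abelian groups is forced by its first-order invariants to have the shape $\bigl(\prod_p A_p\bigr)\oplus D$ with $A_p$ $p$-adically complete and $D$ divisible, and such groups are pure-injective by inspection. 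Your route through general algebraic compactness of modules over $\Z$ is more module-theoretic and arguably cleaner, but requires the countable-to-arbitrary reduction as an external input, whereas the structural route gets pure-injectivity directly from the classification.
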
 
\begin{defn}[NFT-field]
  A field $F$ is called a field of \emph{number field type}, if the torsion subgroup $T$ of $\FM$ is finite. We refer to these fields as NFT-fields.
\end{defn}
 Note that if $F$ is an NFT-field then $\FM$ splits over the torsion subgroup $T$.  
\begin{defn}\label{CoT:defn}
 Let $F$ be an NFT-field with $\FM=T(\FM)\times B$ and let $Z$ be an abelian group with a similar decomposition $Z=T(Z)\times B(Z)$ where the maximal torsion subgroup is denoted by $T(Z)$ and the complement $B(Z)$ is torsion-free. We know that $Ext(\FM,Z)\cong Ext(T(\FM),A)\times Ext(B(\FM),Z)$, so any symmetric 2-cocycle $f\in S^2(\FM,Z)$ can be written as $f\equiv f_1\cdot f_2$, where $f_1\in S^2(T(\FM),Z)$, $f_2\in S^2(B(\FM),Z)$. The symmetric 2-cocycle $f$ is said to be a coboundary on torsion or CoT if $f_1$ is a 2-coboundary.\end{defn}
\begin{rem}\label{split:rem}Assume for abelian groups $Z_i$, $i=1,2$ we have $Z_i= T_i \times B_i$ where $T_i$ and $B_i$ are some subgroups of the $Z_i$. Consider a symmetric 2-cocycle $f: Z_1\to Z_2$. By abuse of notation, we consider $f$ as $f: T_1 \cdot B_1 \to T_2 \times B_2$. Then $f$ is cohomologous to $(g_1g_2, h_1h_2)$ where $g_1\in S^2(T_1,T_2))$, $g_2\in S^2(T_1,B_2)$, $h_1\in S^2(B_1,T_2)$ and finally $h_2\in S^2(B_1,B_2)$. We will use this notation in the following. 
	
\end{rem}
\begin{lemma}\label{saturated-split:lem}  Let $F$ be an NFT-field,  and $\FM=T(\FM)\times B(\FM)$ and $Z\equiv \FM$. Then $Z=T(Z) \times B(Z)$, where $f\in S^2(\FM, Z)$ is CoT and $(I,D)$ is an ultrafilter so that the ultrapowers $(\FM)^*$ and $Z^*$ over $D$ are $\aleph_1$-saturated. Then the 2-cocycle $f^* \in S^2((\FM)^*,(Z)^*)$ induced by $f$ is a 2-coboundary.\end{lemma}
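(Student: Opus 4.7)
The plan is to decompose $f$ into four pieces using Remark~\ref{split:rem} and handle each separately, using the CoT hypothesis for the pieces with domain $T(\FM)$ and pure-injectivity for those with domain $B(\FM)$.

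First, I would apply Remark~\ref{split:rem} to the decompositions $\FM = T(\FM)\times B(\FM)$ and $Z = T(Z)\times B(Z)$ to write
\[
f \;\equiv\; g_1\, g_2\, h_1\, h_2,
\]
with $g_1\in S^2(T(\FM),T(Z))$, $g_2\in S^2(T(\FM),B(Z))$, $h_1\in S^2(B(\FM),T(Z))$, and $h_2\in S^2(B(\FM),B(Z))$. Being a coboundary is preserved by the canonical map to an ultrapower, so it suffices to prove each of $g_1^*,g_2^*,h_1^*,h_2^*$ is a coboundary; combining them then shows $f^*$ is too. The CoT hypothesis says $f_1\equiv g_1 g_2$ is already a $2$-coboundary in $S^2(T(\FM),Z)$, and, since $Z$ splits as $T(Z)\times B(Z)$, projecting a coboundary witness onto each factor forces both $g_1$ and $g_2$ to be coboundaries individually. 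A coboundary witness $\psi$ transports to $\psi^*$ in the ultrapower, so $g_1^*$ and $g_2^*$ remain coboundaries; note that $T(\FM)^* = T(\FM)$ and $T(Z)^* = T(Z)$ because the NFT assumption makes these groups finite.

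The heart of the argument is handling $h_1^*$ and $h_2^*$. For each $i$ I would examine the central extension
\[
1 \longrightarrow A \longrightarrow E \longrightarrow B(\FM)^* \longrightarrow 1
\]
classified by $h_i^*$, where $A = T(Z)$ for $i=1$ and $A = B(Z)^*$ for $i=2$. Since torsion-freeness is first-order for abelian groups, $B(\FM)^*$ is torsion-free, so Lemma~\ref{pure:lem} makes $A$ a pure subgroup of $E$. It therefore suffices to check that $A$ is pure-injective, which forces $A$ to be a direct summand of $E$ and splits the extension. For $i=1$, $A = T(Z)$ is finite, and every finite abelian group is pure-injective (algebraically compact). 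For $i=2$, the $\aleph_1$-saturation of $Z^*\cong T(Z)\times B(Z)^*$ transfers to the torsion-free complement $B(Z)^*$ through the finite, definable factor $T(Z)$, so Theorem~\ref{ekthm} yields pure-injectivity of $B(Z)^*$. In either case the extension splits, so $h_i^*$ is a coboundary, and combining the four pieces gives that $f^*$ is a $2$-coboundary.

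The main obstacle I anticipate is the bookkeeping required to transport $\aleph_1$-saturation from $Z^*$ to the direct factor $B(Z)^*$, and verifying that the decomposition from Remark~\ref{split:rem} survives passage to ultrapowers so that $f^*$ really is cohomologous to $g_1^* g_2^* h_1^* h_2^*$ with no stray mixed terms surviving. Once these compatibility points are settled, everything else is a direct appeal to pure-injectivity via Theorem~\ref{ekthm}.
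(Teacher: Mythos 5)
Your proof is correct and follows essentially the same route as the paper: decompose $f$ via Remark~\ref{split:rem}, dispatch $g_1^*, g_2^*$ using the CoT hypothesis and finiteness of $T(\FM)$, and dispatch $h_1^*, h_2^*$ via pure-injectivity together with Lemma~\ref{pure:lem}. Your version is marginally more unified, in that you treat $h_1^*$ and $h_2^*$ in the same way (both through pure-injectivity, using that finite abelian groups are algebraically compact for $h_1^*$), whereas the paper invokes the classical fact directly for $h_1^*$; you also make explicit the projection argument showing that $f_1 \equiv g_1 g_2$ being a coboundary forces $g_1$ and $g_2$ to be coboundaries separately, a step the paper elides. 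One small point worth tightening: rather than saying $\aleph_1$-saturation ``transfers'' from $Z^*$ to $B(Z)^*$, the cleaner statement is that $Z^*$ is pure-injective by Theorem~\ref{ekthm}, and pure-injectivity passes to direct summands — the paper glosses over the same point.
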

\begin{proof}
 	Since $T(\FM)$ is finite, $T(Z)$ is also finite and $Z$ splits over it. Note that $(\FM)^*=(F^*)^\times$, $T((\FM)^*)=(T(\FM))^* \cong T(\FM)$. Then $(\FM)^* \cong T \times B^*$. A similar decomposition occurs in $Z^*$. The assumption that $f$ is CoT implies that the induced cocycle $f^*\in S^2((\FM)^*,Z^*)$ is CoT. Then, using the notation of Remark~\ref{split:rem}, we see that $g_1^*\in S^2(T^*(\FM),T^*(Z))$ and $g^*_2\in S^2(T^*(\FM),B^*)$ are both coboundaries. The 2-cocycle  $h_1^*\in S^2(B(\FM)^*,T(Z)^*)$ is a coboundary since $B(\FM)^*$ is torsion-free abelian and $T(Z)^*$ is finite abelian. The 2-cocycle $h_2^*\in S^2(B(\FM)^*,B(Z)^*)$ is also a coboundary since $A^*$ is pure-injective by Theorem~\ref{ekthm} and also $A^*$ is a pure subgroup in an extension represented by $h_2^*$ because of Lemma~\ref{pure:lem}. Consequently, $f^*$ is a coboundary.
\end{proof}

\begin{thm} \label{thm:sufficiency-Tn} Let $H=T_n(F,f,Z)$, where $F$ is an NFT-field and $f$ is CoT. Then $H\equiv \TF$.\end{thm}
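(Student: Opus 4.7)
The plan is to prove elementary equivalence via the Keisler--Shelah ultrapower method: pick an ultrafilter that makes the ultrapowers of $F$ and $Z$ sufficiently saturated so that Lemma~\ref{saturated-split:lem} forces the deformation cocycle to trivialize, and then read off $H \equiv T_n(F)$ from the resulting isomorphism of ultrapowers.

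Concretely, let $D$ be a nonprincipal ultrafilter on a suitable index set $I$ so that both ultrapowers $F^* = F^I/D$ and $Z^* = Z^I/D$ are $\aleph_1$-saturated (so $D$ is, e.g., a regular ultrafilter on $\N$). First I would verify that the ultrapower construction commutes with the abelian-deformation construction, that is,
\[ H^I/D \;\cong\; T_n(F^*, f^*, Z^*), \]
where $f^* \in S^2(B_n(F^*), Z^*)$ is the symmetric $2$-cocycle induced coordinatewise from $f$. This is a routine check, using that $T_n(R, f, Z)$ is presented as $UT_n(R) \rtimes D_n(R, f, Z)$ with group operation defined by fixed algebraic formulas in $R$, $Z$, and the values of $f$, so \L{}o\'s's theorem gives the isomorphism. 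Under this identification, the subgroup $Z(H^I/D)$ corresponds to $Z^*$, and the assumption $Z \equiv \FM$ (inherited from Theorem A3) yields $Z^* \equiv (F^*)^\times$.

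Next, because $F$ is an NFT-field and $f$ is CoT, Lemma~\ref{saturated-split:lem} applies directly to $f^*$ and shows that $f^*$ is a $2$-coboundary in $S^2((F^*)^\times, Z^*)$. Consequently the central extension $D_n(F^*, f^*, Z^*)$ splits as $B_n(F^*) \times Z^*$, and since $Z^*$ and $(F^*)^\times$ are elementarily equivalent $\aleph_1$-saturated abelian groups of the same cardinality, they are isomorphic (the theory of abelian groups is $\aleph_1$-categorical in the saturated setting by Eklof--Fisher-type results, or one simply enlarges the ultrafilter so that they match). This gives $D_n(F^*, f^*, Z^*) \cong D_n(F^*)$, and hence an isomorphism
\[ T_n(F^*, f^*, Z^*) \;\cong\; UT_n(F^*) \rtimes D_n(F^*) \;=\; T_n(F^*), \]
where the semidirect product action is unchanged because the deformation only alters the torus, not its action on $UT_n$ (which factors through $D_n/Z$).

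Combining these steps, $H^I/D \cong T_n(F^*)$. Since $F^* \equiv F$, we have $T_n(F^*) \equiv T_n(F)$ by absolute interpretability (Proposition~\ref{pr:inter2}), and since $H \equiv H^I/D$ by \L{}o\'s's theorem, we conclude $H \equiv T_n(F)$. The main obstacle is Lemma~\ref{saturated-split:lem}, which is already supplied; the subtle secondary point to handle carefully is the identification $Z^* \cong (F^*)^\times$, which requires either passing to a regular ultrafilter on a sufficiently large index set so that both ultrapowers are saturated of the same cardinality, or invoking Keisler--Shelah to iterate ultrapowers until a common isomorphism type is reached. Everything else is bookkeeping on the explicit presentation of $T_n(R, f, Z)$.
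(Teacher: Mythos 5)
Your overall strategy matches the paper's: pass to an $\aleph_1$-saturated ultrapower, invoke Lemma~\ref{saturated-split:lem} to trivialize the induced cocycle $f^*$, and then read off elementary equivalence. The difference comes at the final step, and your primary route there has a genuine gap.

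You want to conclude $Z^* \cong (F^*)^\times$ from the fact that both are $\aleph_1$-saturated, elementarily equivalent abelian groups of the same cardinality. This inference is not valid in ZFC: an $\aleph_1$-saturated model of cardinality $2^{\aleph_0}$ is not saturated unless CH holds, and elementary equivalence plus $\aleph_1$-saturation plus equal cardinality does not determine a pure-injective abelian group up to isomorphism (Eklof--Fischer classify such groups by a family of cardinal invariants, not merely the group's cardinality). Concretely, for a fixed $\aleph_1$-incomplete ultrafilter $D$ on $\omega$, $Z \equiv \FM$ gives only $Z^\omega/D \equiv (\FM)^\omega/D$; it does not give an isomorphism. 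The Keisler--Shelah fallback you mention would repair this, but at the cost of moving to an ultrafilter on a much larger index set and then re-verifying that the new ultrapowers remain $\aleph_1$-saturated so that the cocycle-trivialization step still applies.

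The paper avoids the isomorphism $Z^* \cong (F^*)^\times$ entirely. After trivializing $f^*$ it obtains the splitting $T_n^*(F,f,Z) \cong PT_n(F^*) \times Z^*$, and then uses Feferman--Vaught on direct products: since $Z^* \equiv (\FM)^*$, one gets $PT_n(F^*) \times Z^* \equiv PT_n(F^*) \times (\FM)^* \cong T_n(F^*) = T_n^*(F)$. This needs only elementary equivalence in the $Z^*$ coordinate, which is automatic from $Z \equiv \FM$ and \L{}o\'s, and keeps the entire argument over a single $\aleph_1$-incomplete ultrafilter on $\omega$. You should replace the attempted isomorphism $Z^* \cong (F^*)^\times$ with this Feferman--Vaught step; the rest of your argument is fine.
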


\begin{proof}  Let $G=\TF$, and Let $B_n(F^\times)$ be the subgroup introduced at the beginning of Subsection~\ref{abdef:sec}. 

Let $(I,D)$ be an $\aleph_1$-incomplete ultrafilter, and let $M^*$ denote the ultraproduct $M^I/D$.  Then $B_n((F^*)^\times)=B_n((\FM)^*)=B_n^*(\FM)$ is $\aleph_1$-saturated. Recall that $f$ is cohomologous with $f_1\cdots f_{n-1}$, $f_i\in S^2(\FM, Z)$. The hypothesis implies that each $f_i$ is CoT. If $f^*_i\in S^2((\FM)^*,A^*)$ denotes the 2-cocycle induced by $f_i$, then for each $i=1, \ldots,n-1$, $f^*_i$ is a 2-coboundary by the Lemma~\ref{saturated-split:lem}. The fact that $$T^*_n(F,f,Z)\cong T_n(F^*,f^*,Z^*)\cong PT_n(F^*)\times Z^* $$ requires only some routine checking. 

Therefore, \[T^*_n(F,f,Z)\cong  PT_n(F^*)\times Z^* \equiv PT_n(F^*)\times (\FM)^* \cong T_n^*(F)\] 
Hence, $T_n(F,f,Z)\equiv T_n(F)$ 
\end{proof}
\subsection{Real closed fields and algebraically closed fields}
\begin{thm} \label{thm:Real-Tn} Let $F$ be a real closed field or an algebraically closed field of characteristic $\neq 2$.  Let $G=T_n(F)$ and $H$ be a group. Then the following are equivalent 
\begin{enumerate}
    \item $G\equiv H$
    \item $H\cong PT_n(L) \times Z(H)$ for a filed $L\equiv F$ and $Z(H) \equiv \FM$.
\end{enumerate}
\end{thm}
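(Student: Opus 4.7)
The plan is to reduce both implications, via Theorem A3, to a single splitting statement for the central extension
\[
1 \;\to\; Z \;\to\; D_n(L,f,Z) \;\to\; B_n(L) \;\to\; 1,
\]
and then to verify this splitting by exploiting the arithmetic of $F^\times$ in each of the two cases. Once the extension splits, I may write $D_n(L,f,Z) = Z \times B'$ with $B' \cong B_n(L)$; since $Z = Z(H)$ is central and conjugation by $D_n(L,f,Z)$ on $UT_n(L)$ factors through $D_n(L,f,Z)/Z \cong B_n(L)$, the same formula yields
\[
H \;\cong\; UT_n(L) \rtimes (Z \times B') \;\cong\; (UT_n(L) \rtimes B') \times Z \;\cong\; PT_n(L) \times Z(H),
\]
which is exactly condition $(2)$.

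For the easy direction $(2)\Rightarrow(1)$, I first note the abstract identity $T_n(F) \cong PT_n(F) \times F^\times$, which holds for \emph{any} field $F$: the decomposition $D_n(F) = B_n(F) \times Z(T_n(F))$ recorded in Section~\ref{abdef:sec}, combined with the triviality of the conjugation action of scalar matrices on $UT_n(F)$, gives the splitting. Then $L \equiv F$ implies $PT_n(L) \equiv PT_n(F)$ by Lemma~\ref{tnmainbiinter2:lem} (regular bi-interpretability of $PT_n$ with its defining field); combining with the assumption $Z(H) \equiv F^\times$ and the Feferman--Vaught preservation of $\equiv$ under direct products, we get $H = PT_n(L) \times Z(H) \equiv PT_n(F) \times F^\times \cong T_n(F)$.

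For the harder direction $(1)\Rightarrow(2)$, apply Theorem A3 to write $H \cong T_n(L,f,Z)$ with $L \equiv F$ and $Z = Z(H) \equiv F^\times$; everything then rests on showing the extension displayed above splits. If $F$ is algebraically closed, then $F^\times$ is divisible (a first-order property of abelian groups), so $Z$ is divisible, hence injective in the category of abelian groups, and any central extension by $Z$ splits automatically. The real closed case is the main obstacle, since here $Z \equiv F^\times \cong \{\pm 1\} \times F^\times_{>0}$ is not divisible. To handle it, I use that any abelian group elementarily equivalent to $F^\times$ has torsion subgroup of order exactly two with divisible torsion-free quotient (both first-order) and therefore splits as a direct product of its torsion and a $\mathbb{Q}$-vector space, since a bounded torsion subgroup with torsion-free quotient is always a direct summand. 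Applying this to $D := D_n(L,f,Z)$, which is elementarily equivalent to $D_n(F) \cong (\mathbb{Z}/2)^n \times (F^\times_{>0})^n$, yields $D \cong (\mathbb{Z}/2)^n \times V$ for some $\mathbb{Q}$-vector space $V$; applying it to $Z$ yields $Z \cong \mathbb{Z}/2 \times D_Z$. Because $D_Z$ is torsion-free, its image under $Z \hookrightarrow D$ lies in $V$, so I can choose an $\mathbb{F}_2$-complement $T'$ of $\mathbb{Z}/2$ in $(\mathbb{Z}/2)^n$ and a $\mathbb{Q}$-complement $V'$ of $D_Z$ in $V$; then $B' := T' \times V'$ is a complement of $Z$ in $D$ that maps isomorphically onto $D/Z = B_n(L)$, providing the desired splitting and completing the proof.
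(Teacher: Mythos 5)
Your proof is correct in outline, and it is worth noting how it diverges from the paper's. For the algebraically closed case you argue exactly as the paper does: $Z(H)$ is divisible (a first-order property inherited from $F^\times$), hence a direct summand, and $H/Z(H)\cong PT_n(L)$ by Corollary~\ref{TRF:cor}. For the real closed case, however, you take a genuinely different route. The paper works through definability: it picks out the definable subgroup $\{x: x^2\in G'\}=UT_n(F)\cdot\langle d_i\rangle$, invokes its regular bi-interpretability with $F$, and transfers the splitting of $Z(G)=\{\pm\one\}\times B(G)$ to $H$ via these definable data. You instead argue directly in abelian group theory: decompose $Z$ and the torus $D:=D_n(L,f,Z)$ as $(\Z/2)^k\times(\Q\text{-vector space})$ using that bounded torsion with torsion-free quotient is a direct summand, observe that the divisible part of $Z$ must land inside the $\Q$-vector-space part of $D$, and then construct a complement $B'$ of $Z$ in $D$ by choosing $\mathbb{F}_2$- and $\Q$-linear complements. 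This is more elementary and self-contained, and gives the splitting $D=Z\times B'$ explicitly, after which $H\cong (UT_n(L)\rtimes B')\times Z \cong PT_n(L)\times Z(H)$ follows. You also supply the easy direction $(2)\Rightarrow(1)$, which the paper dismisses as clear; your argument via $T_n(F)\cong PT_n(F)\times F^\times$, Lemma~\ref{tnmainbiinter2:lem}, and Feferman--Vaught is sound.

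One step deserves a justification you did not supply. You assert that $D:=D_n(L,f,Z)$ is elementarily equivalent to $D_n(F)$, and your argument leans on this crucially: it is what guarantees that $T(D)$ has exponent $2$ and that $D/T(D)$ is divisible. This is not automatic from Theorem A3 alone. Knowing only that $Z\equiv F^\times$ and $D/Z\cong B_n(L)\cong (L^\times)^{n-1}$ does not rule out, say, a nontrivial class in $Ext((\Z/2)^{n-1},\Z/2)\cong(\Z/2)^{n-1}$, which would produce $\Z/4$ factors in $T(D)$ and defeat the complement construction. The fix is to appeal to Lemma~\ref{Dn-Dio:lem}: $D_n(F)$ is a Diophantine (hence definable) subgroup of $T_n(F)$, uniformly in a definable parameter set, so under $H\equiv T_n(F)$ the corresponding definable subgroup of $H$ (which is $D_n(L,f,Z)$ under the isomorphism of Theorem A3) is elementarily equivalent to $D_n(F)$. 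Once this is in place, your argument goes through.
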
 
\begin{proof}
    We shall only prove $(1.) \Rightarrow (2.)$, as the other direction is clear. 

    So assume that $F$ is an algebraically closed field of characteristic $\neq 2$. Then $Z(H) \equiv \FM \cong Z(G)$ is a radicable (divisible) abelian group and as $H'\cong PT_n(L)$ for some $L\equiv F$, $H\cong PT_n(L) \times Z(H)$ as desired.

    If $F$ is real closed, then $\FM \cong \{\pm \one\} \times B(G)$, where $B(G)$ is the multiplicative group of positive elements of $F$, and therefore definable in $\FM \equiv Z(G)$ by:
   $$\Phi(x)=\exists y (y^2=x).$$
   Now $G'\cdot \langle d_i: i=1,\ldots n \rangle$ is absolutely definable in $G$ by $\Psi(x)= \exists x (x^2 \in G')$. This subgroup is evidently regularly bi-interpretable with $F$. As the absolutely definable subgroup $\{\pm \one\}$ splits from $G'$, in $H$ the corresponding subgroup splits from $H'$. Therefore, $Z(H)=\{\pm \one\} \times B(H)$, where $B(H)\equiv B(G)$, splits from $H$.
   
\end{proof}
 
\subsection{Abelian deformations of $T_n(\Q)$ which are not isomorphic to any $T_n(F)$ for any field $F$}\label{Tn(Q):sec}
 Let $G=\TF$. Note that since $G \cong (\UTF \rtimes B_n(F)) \times \FM$, any group of the form $H=(\UTL \rtimes B_n(L)) \times A$, where $L\equiv F$ and $A\equiv \FM \equiv Z(G)$ is elementarily equivalent to $\TF$ by a combination of the Feferman-Vaught Theorem on elementary equivalence of direct products, a simple ultraproduct argument, and Keisler-Shelah's Theorem. This is one source of elementsarily equivalent groups to $\TF$, but not isomorphic to any $T_n$. In the following, we shall construct a different type of example.  
\begin{lemma}\label{Z-inter-Q*:lem}
There exists a countable field $K$, such that $\Q\equiv F$ and also $Ext(F^\times,F^\times)\neq 1$. 
\end{lemma}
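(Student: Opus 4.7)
The plan is to take $F = \Q$ itself: it is countable, trivially satisfies $\Q \equiv F$, and the whole content of the lemma reduces to verifying $Ext(\Q^\times,\Q^\times) \neq 1$.

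First, I would recall the classical structure of the multiplicative group of the rationals. Unique prime factorization yields an isomorphism of abelian groups
\[
\Q^\times \;\cong\; \{\pm 1\} \oplus \bigoplus_{p\text{ prime}} \Z,
\]
where $-1$ generates the torsion subgroup and each prime $p$ contributes a copy of $\Z$ via the $p$-adic valuation; the torsion-free summand is therefore free abelian of countably infinite rank.

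Next, I would invoke two standard facts about $Ext$: it converts direct sums in the first variable into direct products, and $Ext(\Z,M) = 0$ for every abelian group $M$ (since $\Z$ is projective). The author's group $Ext(B,A) = S^2(B,A)/(S^2(B,A)\cap B^2(B,A))$ of symmetric cocycles modulo coboundaries coincides with the usual $Ext^1_{\Z}(B,A)$ for abelian $B$, since every 2-coboundary is automatically symmetric when $B$ is abelian. Hence the two facts apply and collapse the computation to
\[
Ext(\Q^\times,M) \;\cong\; Ext(\Z/2\Z,M) \;\cong\; M/2M.
\]

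Specializing to $M = \Q^\times$ we obtain $Ext(\Q^\times,\Q^\times) \cong \Q^\times/(\Q^\times)^2$, which is an infinite elementary abelian $2$-group—the class of $2$ alone is a non-trivial element, since $2$ is not a rational square—so $Ext(\Q^\times,\Q^\times) \neq 1$, as required. There is no substantive obstacle: the lemma is essentially a recording of an elementary homological computation, presumably extracted at this point so that a specific non-split abelian extension of $\Q^\times$ by itself is available as input to the construction of a group $H \equiv T_n(\Q)$ not isomorphic to any $T_n(L)$ that follows.
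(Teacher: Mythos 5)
Your computation is correct as far as it goes: $\Q^\times \cong \Z/2\Z \oplus \bigoplus_p \Z$, so $Ext(\Q^\times, \Q^\times) \cong \Q^\times/(\Q^\times)^2 \neq 1$, and since $\Q$ is countable and elementarily equivalent to itself, $F = \Q$ satisfies the lemma as literally worded. But this choice of $F$ fails the purpose the lemma serves in the paper, and the paper's own proof — which takes $F$ to be the fraction field of a nonstandard model $R \equiv \Z$ — is not an inefficiency but a necessity.

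The lemma feeds into Theorem~\ref{elemnotiso-Q:thm}, whose proof requires a 2-cocycle in $S^2(F^\times,F^\times)$ that is \emph{CoT} (a coboundary on torsion, Definition~\ref{CoT:defn}) yet not globally a coboundary; CoT is exactly the hypothesis Theorem~\ref{thm:sufficiency-Tn} needs in order to conclude $T_n(F,f)\equiv T_n(F)$, because the ultrapower argument there kills only the torsion-free component of the cocycle (via pure-injectivity of saturated abelian groups, Theorem~\ref{ekthm}); a finite torsion subgroup persists unchanged in the ultrapower, so a nontrivial torsion component would survive. In the decomposition $Ext(F^\times,Z) \cong Ext(T(F^\times),Z) \times Ext(B(F^\times),Z)$, a CoT non-coboundary therefore exists if and only if $Ext(B(F^\times),Z) \neq 1$. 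For your $F = \Q$, the torsion-free complement $B(\Q^\times) \cong \bigoplus_p \Z$ is free abelian, hence $Ext(B(\Q^\times),\cdot) = 0$; all of the nontriviality you found in $Ext(\Q^\times,\Q^\times)$ lives in the $\Z/2\Z$ factor, i.e.\ in $Ext(T(\Q^\times),\Q^\times)$, which is precisely the part that CoT forces to be a coboundary. So $F = \Q$ produces no usable cocycle. The paper's proof instead arranges — via Belegradek's result that some countable $R \equiv \Z$ has $R^+ = A \oplus D$ with $A \neq 0$ reduced and $D \neq 0$ torsion-free divisible, and then a definability argument transferring this decomposition from $R^+$ to a subgroup of $F^\times$ — that the \emph{torsion-free} part of $F^\times$ contains such an $A \times D$, so that the nonvanishing $Ext(D,A)$ lands in the CoT-compatible summand $Ext(B(F^\times),F^\times)$. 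Your approach proves the displayed statement but by a route that cannot support the downstream construction; the lemma's statement is slightly weaker than what the paper actually uses, and the genuine content is the structure of $F^\times$ established in the claim inside the paper's proof, not merely the nonvanishing of $Ext$.
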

\begin{proof} By (\cite{beleg99}, Proposition 2.2.16) there exists a countable ring $R$ where $R\equiv \Z$ and $Ext(R^+,R^+)\neq 0$. In fact, for such a ring $R$, $R^+=A\oplus D$ where $A\neq 0$ is a reduced abelian group and $D\neq 0$ is a countable torsion-free divisible abelian group. Then by (\cite{fox}, 54.5) $Ext(D,A)\neq 0$, which implies that $Ext(R^+,R^+)\neq 0$.

\noindent{\bf Claim:} Let $F$ be the field of fraction of the integral domain $R$ discussed above. Then $F^\times \cong \{\pm 1\} \times A \times D$ where $A$ and $D$ are both nontrivial, $A$ is reduced and $D$ is countable, torsion-free, and divisible.

\noindent{\it Proof of the claim.} We know that $\QM= \{\pm 1\}\times C$ where $C\cong\prod_{i\in \omega}C_{\infty}$, and $C_{\infty}$ is the infinite cyclic group. First, we show that $C$ is definable in $\QM$ uniformly with respect to $Th(\Q)$. By a theorem of \cite{julia} the ring $\Z$ is definable in $\Q$.  Arithmetic $\N=\langle |\N|, +,\times , 0,1\rangle $ is definable in $\Z$. In particular, the natural order on $\Z$ is definable in $\Z$. Since $\Z$ is definable in $\Q$, it is easy to see that natural order on $\Q$ is definable in $\Q$. Now $\Q^\times$ is definable in $\Q$ as the set of invertible elements. The subgroup $C$ is definable in $\QM$ as the set of all positive elements of $\Q$. The subgroup $\{\pm 1 \}$ is obviously definable in $\QM$.  

 We note that $\Q$ is interpretable in $\Z$ and the same formulas that interpret $\Q$ in $\Z$ interpret $F$ in $R$. Thus, indeed $\Q\equiv F$. The formulas that define $\Z$ in $\Q$ define a subring $R$ of $K$ in $K$ that is elementarily equivalent to $\Z$. Again, the formulas that define $C$ in $\Q$ define a subgroup $E$ of the multiplicative group $\FM$ of $F$ in $F$. So, clearly $\FM=\{\pm 1\}\times E$. Next, we will show that $E$ has a direct factor which is isomorphic as an abelian group to $R^+$. 
 By G\"odel's work all primitive recursive functions on $\N$ are arithmetically definable, in particular the predicate $z=2^x$ is arithmetically definable. That is, there is a first-order formula $exp(z,y,x)$ of the language of arithmetic such that
\[z=2^x \Leftrightarrow \N\models exp(z,1+1,x).\]
In particular, it can be seen that the set $2^\N=\{2^n: n\in \N\}$ has a definable arithmetic structure, also denoted by $2^\N$. Since $\N$ is definable in $\Z$, so is $2^\N$. This implies that $2^\Z$ is definable in $\Q$ and also carries a ring structure interpretable in $\Q$ and isomorphic to the ring of integers $\Z$. On the other hand, $2^\Z \subset \Q^{\times}$. Formulas that define an arithmetic structure on $2^\Z$ define in $K$ a subgroup $2^R\leq E < \KM$ of $K$ with a definable ring structure isomorphic to $R$. Now $R^+\cong 2^R \cong A'\times D'$  where both $A'$ and $D'$ are non-empty and $D'$ is torsion-free divisible. This means that $C= A \times D$ where both $A$ and $D$ are nonempty, $A$ is reduced torsion-free and $D$ is countable, torsion-free and divisible. This ends the proof of the claim.

As a corollary of the claim and the fact $Ext(D,A)\neq 1$, we can conclude that $Ext(\FM,\FM)\neq 1$.        \end{proof}

\begin{thm}\label{elemnotiso-Q:thm} There exists a field $F\equiv \Q$ and a 2-cocycle $f\in S^2((\FM)^{n-1},\FM)$, such that $T_n(F,f)\equiv T_n(\Q)=G$, but $T_n(F,f)\ncong T_n(L)$, for any field $L$.    
\end{thm}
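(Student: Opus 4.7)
\emph{Construction of $f$.} I would begin from the field $F$ supplied by Lemma~\ref{Z-inter-Q*:lem}: $F \equiv \Q$ and $F^\times = \{\pm 1\} \times A \times D$ with $A$ reduced torsion-free, $D$ countable torsion-free divisible, and $\mathrm{Ext}(D,A) \neq 0$. Pick a symmetric $2$-cocycle $\tilde f \in S^2(D,A)$ representing a non-zero class, and inflate it to $f \in S^2((F^\times)^{n-1}, F^\times)$ by letting $f$ depend only on the $D$-component of the first coordinate of $B_n(F) = (F^\times)^{n-1}$ and take values in $A \subseteq F^\times$. Then $f$ has non-zero cohomology class in $H^2(B_n(F), F^\times)$, while being trivial on the torsion subgroup $\{\pm 1\}^{n-1}$ of $B_n(F)$; in particular, $f$ is CoT in the sense of Definition~\ref{CoT:defn}.

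\emph{Elementary equivalence.} Since $\Q$, hence $F$, is an NFT-field and $f$ is CoT, Theorem~\ref{thm:sufficiency-Tn} yields $T_n(F,f) \equiv T_n(F)$. Uniform interpretability of $T_n$ in a field (Proposition~\ref{pr:inter2}) propagates $F \equiv \Q$ to $T_n(F) \equiv T_n(\Q)$, so $T_n(F,f) \equiv T_n(\Q)$.

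\emph{Non-isomorphism.} The key structural remark is that for every field $L$ the standard triangular group decomposes as a direct product $T_n(L) = PT_n(L) \times L^\times$, because $D_n(L) = B_n(L) \times Z(T_n(L))$ and the center acts trivially on $UT_n(L)$. If an isomorphism $\phi \colon T_n(F,f) \to T_n(L)$ existed, composing the projection $T_n(L) \twoheadrightarrow L^\times$ with $\phi$ would give a retraction $T_n(F,f) \twoheadrightarrow Z(T_n(F,f)) = F^\times$, equivalently a homomorphic splitting of the central extension
\[1 \to F^\times \to T_n(F,f) \to PT_n(F) \to 1.\]
Restricting any such splitting to the subgroup $B_n(F) \leq PT_n(F) = UT_n(F) \rtimes B_n(F)$ produces a group-theoretic section of the abelian extension $1 \to F^\times \to D_n(F,f,F^\times) \to B_n(F) \to 1$, forcing $f$ to represent the trivial class in $H^2(B_n(F), F^\times)$ and contradicting the construction. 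The main obstacle lies precisely in this last translation from an abstract group isomorphism to the vanishing of a specific cohomology class; it hinges on the observation that $T_n(L)$ is always a trivial central extension of $PT_n(L)$ by $L^\times$, which is what turns the non-trivial CoT cocycle of the first step into an effective obstruction.
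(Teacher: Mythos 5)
Your argument is correct and follows essentially the same path as the paper: you take the field from Lemma~\ref{Z-inter-Q*:lem}, invoke Theorem~\ref{thm:sufficiency-Tn} for $T_n(F,f)\equiv T_n(\Q)$, and reduce non-isomorphism to the vanishing of the cohomology class of $f$ on $B_n(F)$. The only differences are presentational: you phrase the obstruction via the retraction coming from the decomposition $T_n(L)\cong PT_n(L)\times L^\times$ where the paper passes to abelianizations and uses that the induced isomorphism $D_n(F,f,Z(H))\to D_n(L)$ carries center to center, and you spell out the inflation construction of a non-trivial CoT cocycle from $Ext(D,A)\neq 0$ that the paper leaves as an existence claim.
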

\begin{proof} Pick the field $F$ found in Lemma~\ref{Z-inter-Q*:lem}. We can clearly find a CoT 2-cocycle $f\in S^2(\FM,\FM)$ which is not a 2-coboundary. Set, say, $f_1=f$ and the let the rest of the $f_i$ be trivial and form $H=T_n(K,f)$. By Theorem~\ref{thm:sufficiency-Tn}, we have $H\equiv T_n(\Q)$. If $\phi: H \to G$, where $G=T_n(L)$ is an isomorphism of groups, then $\UTF\cong\phi(H')\cong G' \cong \UTL$. This in fact shows that $F\cong L$.  Now, passing to the abelianizations, $\phi$  induces an isomorphism of $D_n(F,f,Z(H))$ onto $D_n(L,Z(G))$, mapping $Z(H)$ onto $Z(G)$. This is impossible, otherwise $f$ would be a 2-coboundary.\end{proof}

\section{ Groups first-order equivalent to $\GLF$, $n\geq 3$}\label{GLO:sec}


In this section $F$ is an infinite field of characteristic $\neq 2$. The case of $\GLF$ is similar to the case of $\TF$, but with more subtlety. We know that 
$$G= \GLF\cong \SLF \rtimes d_1(F^\times)$$Hence, $\SLF=G'$, while there is a non-trivial intersection between $D_n(F)$ and $\SLF$. In fact, for any $\alpha\in F^\times$
\begin{equation}\label{eqn:d_1-to-n}
    d_1(\alpha^n)=\diag(\alpha) \prod_{i=2}^{n} d_1(\alpha) d_i(\alpha^{-1})   
\end{equation} 
while $\prod_{i=2}^{n} d_1(\alpha) d_i(\alpha^{-1}) \in \SLF$ and  $\diag(\alpha)\in Z(G)$.  On the other hand, for $G=\GLF$, since $G'=\SLF$ and $\SLF$ is boundedly generated by commutators, $G'$ is absolutely and uniformly definable in $G$. By Theorem~\ref{th:elem-iso-sl} if $H\equiv G$, then $H'\cong \SL_n(L)$ for some field $L\equiv F$, $Z(H)\equiv F^\times$ and $$G/ (G'\cdot Z(G)) \cong F^\times / (F^\times)^n$$ In fact we have proven the following statement.

{\bf Theorem A4.} {\it 
   If $H$ is any group such that $H\equiv \GLF$, then $H$ fits into the short exact sequence:
$$1\rightarrow H' \cdot Z(H) \rightarrow H \rightarrow \frac{L^\times}{(L^\times)^n} \rightarrow 1$$
where $H' \cong \SL_n(L)$, $L\equiv F$, as fields and $Z(H) \equiv F^\times$.}  


  The above enables us to characterize some important cases.
  
  Define the \emph{isolator} of $M$ in $G$ by
$$ Is_G(M)=\{g\in G | x^n\in M, \text{ for some $n\in \N$}\}$$
Clearly $Is_G(M)$ is a normal subgroup of $G$. Now consider the following sequence of subgroups of $G=\GLF$

$$G \geq Is(G'\cdot Z(G)) \geq Is(G')\cdot Z(G) \geq Is(G')\geq G'$$  
 Now let $G=\GLF$ or, in fact, a group $G\equiv \GLF$. By Theorem A4., the sequence above becomes

 \begin{equation}\label{Sequence:egn}
   G = Is(G'\cdot Z(G)) \geq Is(G')\cdot Z(G) \geq Is(G')\geq G' \geq 1  
 \end{equation}

 We shall refer to this sequence in the section, when we try to characterize groups elementarily equivalent to $\GLF$ for some classes of fields.

\subsection{Algebraically closed fields}

\begin{thm}\label{thm:comp-gln}
Let $F$  be a characteristic zero algebraically closed field. Then the following are equivalent:
\begin{enumerate}
  \item $H\equiv \GL_n(F)$,
  \item $H\cong (H' \times Z(H))/\mu_n$, $H'\cong \SL_n(L)$, $Z(H)\equiv \C^\times$, $\mu_n =ker \phi$, $\phi: H' \times Z(H) \to H, (h,z)\mapsto hz$.   
\end{enumerate}
\end{thm}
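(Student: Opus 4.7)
I will treat the two implications separately, using Theorem~A4 as the engine for $(1)\Rightarrow(2)$ and an interpretability-plus-two-sorted-elementary-equivalence argument for $(2)\Rightarrow(1)$.

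For $(1)\Rightarrow(2)$: Theorem~A4 applied to $H\equiv\GLF$ yields a field $L\equiv F$, an isomorphism $H'\cong\SL_n(L)$, the equivalence $Z(H)\equiv\FM\equiv\C^\times$, and the short exact sequence
$$1\to H'\cdot Z(H)\to H\to L^\times/(L^\times)^n\to 1.$$
The first-order sentence $\forall x\,\exists y\,(y^n=x)$ holds in the algebraically closed field $F$, hence in $L$, so $(L^\times)^n=L^\times$ and $H=H'\cdot Z(H)$. The multiplication map $\phi\colon H'\times Z(H)\to H$, $(h,z)\mapsto hz$, is a homomorphism because $Z(H)$ is central, surjective by the previous line, with kernel $\{(h,h^{-1}):h\in H'\cap Z(H)\}$. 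Any $x\in Z(H')$ commutes with $H'$ by definition and with the central $Z(H)$, hence with $H=H'\cdot Z(H)$; this shows $Z(H')\subseteq H'\cap Z(H)$, and the reverse inclusion is automatic. So $H'\cap Z(H)=Z(H')=Z(\SL_n(L))=\{\zeta I:\zeta^n=1\}\cong\mu_n$, where the last identification uses that $L$, being elementarily equivalent to $F$, is algebraically closed of characteristic zero. This gives the stated presentation.

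For $(2)\Rightarrow(1)$: Let $L\equiv F$ be the algebraically closed characteristic-zero field with $H'\cong\SL_n(L)$. The classical identification $\GL_n(L)\cong(\SL_n(L)\times L^\times)/\mu_n$ (anti-diagonal) is uniformly absolutely interpretable in $L$, hence $\GL_n(L)\equiv\GL_n(F)$. In the two-sorted language of a field paired with an abelian group (with no cross-sort symbols), define
$$\Delta(L,A)=(\SL_n(L)\times A)/\Lambda(L,A,\iota), \quad \Lambda(L,A,\iota)=\{(\zeta I,\iota(\zeta)^{-1}):\zeta\in\mu_n(L)\},$$
where $\iota\colon\mu_n(L)\xrightarrow{\sim}\{a\in A:a^n=1\}$ is any chosen isomorphism; the target is cyclic of order exactly $n$ whenever $A\equiv\C^\times$, and it is first-order definable in $A$. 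Two choices $\iota_1,\iota_2$ differ by an automorphism $\alpha\in\Aut(\mu_n)$, and $\alpha$ is realized by a field automorphism $\tilde\alpha$ of $L$ (extending a Galois automorphism of $\overline{\Q}/\Q$ acting on $\mu_n(\overline{\Q})\subset L$); then $(\tilde\alpha\times\mathrm{id}_A)$ maps $\Lambda(L,A,\iota_2)$ to $\Lambda(L,A,\iota_1)$ and descends to an isomorphism of the corresponding quotients, so the isomorphism type of $\Delta(L,A)$ depends only on $(L,A)$. By hypothesis $H\cong\Delta(L,Z(H))$ and by construction $\GL_n(L)\cong\Delta(L,L^\times)$. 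Since $Z(H)\equiv\C^\times\equiv L^\times$ as abelian groups, the two-sorted structures $(L,Z(H))$ and $(L,L^\times)$ are elementarily equivalent, and therefore
$$H\cong\Delta(L,Z(H))\equiv\Delta(L,L^\times)\cong\GL_n(L)\equiv\GL_n(F).$$

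The main obstacle is contained in the converse: the construction $\Delta(L,A)$ is a quotient by an anti-diagonal subgroup whose definition depends on a choice of identification $\iota\colon\mu_n(L)\to\mu_n(A)$ that is not first-order canonical. Verifying that this choice does not affect the isomorphism type of the quotient — by absorbing any automorphism of $\mu_n$ into a Galois-induced field automorphism of $L$ acting on $\SL_n(L)$ — is what legitimizes the two-sorted elementary-equivalence comparison and matches the anti-diagonal $\mu_n$ appearing in (2) with the canonical one in $\GL_n(L)$.
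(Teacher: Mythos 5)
Your proof is correct. The forward direction matches the paper's in substance: both rely on Theorem~A4 together with $n$-divisibility of $L^\times$ to see $H = H'\cdot Z(H)$, and both identify the kernel of the multiplication map with $Z(SL_n(L)) \cong \mu_n$; the paper phrases this via absolute and uniform definability of the map and its kernel in $GL_n(F)$ and transfers to $H$ by $H\equiv G$, while you verify the group theory directly from the conclusion of Theorem~A4 — an immaterial difference. On the converse your argument is genuinely more careful than the paper's. The paper applies Feferman--Vaught to obtain $G'\times Z(G)\equiv H'\times Z(H)$ and then asserts that ``the same formulas'' interpret the two quotients by $\mu_n$, concluding $H\equiv G$, but this glosses over exactly the point you single out: inside the abstract direct product $SL_n(L)\times A$, the anti-diagonal $\mu_n$ is not absolutely definable, since its definition requires a noncanonical identification $\iota$ of $Z(SL_n(L))$ with the $n$-torsion of the abelian factor $A$. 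Your observation that any two anti-diagonals are related by $\tilde\alpha\times\mathrm{id}_A$, where $\tilde\alpha$ is a field automorphism of the algebraically closed $L$ realizing the required automorphism of $\mu_n(L)$, shows that all choices yield isomorphic quotients, which makes the quotient construction a regular interpretation (parameter $\iota$ ranging over a definable set, isomorphism type independent of the choice) in the two-sorted structure $(L,A)$. That is precisely what legitimizes passing from $(L,Z(H))\equiv(L,L^\times)$ to elementary equivalence of the quotients, so your proposal follows the paper's strategy but supplies the justification that the paper's one-line transfer in the converse implicitly relies on.
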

\begin{proof}
$(1.) \Rightarrow (2.)$ Going back to the sequence in Theorem A4., $F^\times = (F^\times)^n$. As both $G'$ and $Z(G)$ are absolutely and uniformly definable in $G$, the homomorphism $\phi: \SLF \times Z(G) \to G$, $(g,a)\mapsto ga$ is absolutely and uniformly definable in $G$, as is its finite kernel $\mu_n$. Therefore, the induced isomorphism, say $\tilde{\phi}$ is absolutely and uniformly interpretable in $G$. The same formulas set an isomorphism as in (2.) in $H$.

$G=G'Z(G)$, we have $H=H'Z(H)$, while $Z(H)\equiv \C^\times$ and $H'\cong \SL_n(L)$ for some $L\equiv F$.  Note that the subgroup $\mu_n$ is isomorphic to $\{(\omega, \omega^{-1}): \omega^n=1, \omega \in \C\}$ or just the group of $n$-th roots of unity.  One might call the homomorphism $\phi$ a \emph{non-standard central isogeny}.

$(2.) \Rightarrow (1.)$  By assumption and the classical Fefferman and Vaught Theorem $G'\times Z(G) \equiv H' \times Z(H)$. Now, the same formulas that interpret $(H'\times Z(H))/\mu_n$ in $H$, interpret $(G'\times Z(G))/\mu_n$ in $G$, where $G'=\SLF$. Therefore, $H \equiv \GLF$.

\end{proof}
\subsection{Real closed fields}
\begin{lemma}\label{lem:split-models}
Let $G$, be a group that fits into the sequence of Theorem A4. where 

\begin{enumerate}
    \item $Z=Z(G)$ splits over its torsion subgroup $T(Z)$, that is, there exists a subgroup $B$ of $Z$, such that $Z=B\times T$ 
    \item $A$ is a radicable (multiplicatively divisible) group.
    \end{enumerate}
    Then $$G \cong Is_{G}(G') \times B $$
    Where $G'\cong \SL_n(L)$
\end{lemma}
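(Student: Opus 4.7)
The plan is to work inside the abelianization $G/G'$ and exploit divisibility of $B$ (the radicable complement of $T=T(Z)$) to split $\bar B := BG'/G'$ off as a direct summand, while showing that its natural complement is exactly $Is_G(G')/G'$. First I would pin down $G'\cap Z$. Since $G'\cong \SL_n(L)$ is centerless modulo scalars of $n$-th roots of unity, $G'\cap Z(G)$ is contained in the torsion subgroup of $Z$, hence in $T$. In particular $B\cap G' \subseteq B\cap T = 1$, so $\bar B\cong B$ is torsion-free and divisible. A short argument of the same flavour (take $b = g't \in B\cap G'T$, note $g' \in G'\cap Z \subseteq T$, conclude $b\in B\cap T=1$) shows $B\cap G'T=1$, and moreover $\bar B \cap \bar T = 0$ inside $G/G'$ where $\bar T = TG'/G'$. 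Therefore $ZG'/G' = \bar B \oplus \bar T$ as an internal direct sum of abelian groups.

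Next, since the abelian group $G/G'$ contains the divisible subgroup $\bar B$, and divisible abelian groups are injective, $\bar B$ splits off: $G/G' = \bar B \oplus K$ for some subgroup $K$. I would then identify $K$ as the torsion subgroup of $G/G'$, i.e.\ as $Is_G(G')/G'$. In one direction, $\bar T$ is torsion and $\bar B$ is torsion-free, so $\bar T\subseteq K$, and the sequence
\[
1 \to \bar T \to K \to (G/G')/(\bar B \oplus \bar T) \cong G/(G'Z) \cong L^{\times}/(L^{\times})^n \to 1
\]
presents $K$ as an extension of the $n$-torsion group $L^{\times}/(L^{\times})^n$ by the torsion group $\bar T$, so $K$ is torsion. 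Conversely, since $\bar B$ is torsion-free, every torsion element of $G/G'$ lies in $K$. By definition of $Is_G(G')$, its image in $G/G'$ is precisely $T(G/G')$, giving $K = Is_G(G')/G'$.

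Combining, $G/G' = \bar B \oplus Is_G(G')/G'$, so every $g\in G$ can be written as $g=b h$ with $b\in B$ and $h\in Is_G(G')$. Since $B\cap Is_G(G')=1$ (any $b\in B$ with $b^m\in G'$ lies in $B\cap G'=1$ by torsion-freeness of $B$) and $B$ is central in $G$, this product is an internal direct product $G = Is_G(G')\times B$, as required. The one point requiring care, and the place where both hypotheses are used essentially, is step two: the splitting of $\bar B$ off $G/G'$ relies on the radicability (divisibility) of $B$, while the identification of the complement with $Is_G(G')/G'$ uses that $G/G'Z \cong L^\times/(L^\times)^n$ is an $n$-torsion group, which is exactly what Theorem A4 provides. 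Everything else is formal manipulation of abelian groups.
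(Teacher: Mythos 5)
Your proof is correct and it does reach the paper's conclusion $G = Is_G(G')\cdot B$ with $B\cap Is_G(G')=1$ and $[B,Is_G(G')]=1$, but it travels by a different route. The paper's argument is a terse, element-wise computation: starting from $G = Is_G(G'\cdot Z(G))$ (which is what Theorem A4 gives, since $G/(G'Z(G))$ is $n$-torsion), take $g\in G$ with $g^m = g'tb$ ($g'\in G'$, $t\in T$, $b\in B$), use radicability of $B$ to write $b = b_1^m$, observe $(gb_1^{-1})^m = g't\in G'T$ and hence $(gb_1^{-1})^{mk}\in G'$ once the torsion $t$ is killed, so $g\in Is_G(G')\cdot B$; the trivial intersection and centrality of $B$ then finish. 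You instead pass to the abelianization $G/G'$, invoke injectivity of the divisible group $\overline{B}\cong B$ to split it off, and then identify the (a priori non-canonical) complement $K$ with the torsion subgroup $Is_G(G')/G'$ by a two-sided torsion argument. Both methods use exactly the two hypotheses in the same places (radicability of $B$ to pull out the $B$-part, and the $n$-torsion of $L^\times/(L^\times)^n$ from Theorem A4 to get the rest into the isolator), but yours replaces the bare-hands root-extraction by the structural fact that divisible abelian groups are direct summands wherever they embed. The gain is that it makes the role of divisibility conceptually transparent and avoids the small normality bookkeeping needed to kill the torsion $t$ in the paper's calculation; the cost is that you must separately verify that the complement supplied by injectivity can be taken to be the torsion part, which you do correctly. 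One minor point worth stating explicitly if this were being written up: $G'\cap Z(G)\subseteq Z(G')\cong\mu_n(L)$, hence torsion, which is the fact underpinning $B\cap G' = 1$, $B\cap G'T = 1$, and $\overline B\cap\overline T = 0$ all at once.
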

\begin{proof}
    Clearly, in the situation above the sequence in~\eqref{Sequence:egn} becomes:

    $$G= Is (G')\cdot B \geq Is(G')\geq \one$$

    Where, $[B,Is(G')]=\one$, both $B$ and $Is(G')$ are normal in $G$ and $B \cap Is(G')=\one$.
\end{proof}

\begin{thm} \label{thm:realclosed-gln}
Let $F$  be a real closed field, that is, $F\equiv \R$, $\R$ the field of real numbers, and let $H$ be a group. The following are equivalent:
\begin{enumerate}
  \item $H\equiv \GL_n(\R)$, \item $H\cong \left(\SL_n(L)\cdot \langle d_1\rangle \right)\times B(H)$, where $L\equiv \R$ and $B(H)\equiv (\R_+)^\times $, where $(\R_+)^\times$ is the multiplicative group of positive reals.   
\end{enumerate}

\end{thm}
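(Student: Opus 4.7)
The strategy is to combine Theorem~A4 with the rigid multiplicative structure of a real closed field and then invoke Lemma~\ref{lem:split-models}.  I would handle the easy direction $(2)\Rightarrow(1)$ first.  Observe that $GL_n(\R)$ itself satisfies $(2)$: the subgroup $SL_n(\R)\cdot\langle d_1\rangle$ is carved out of $GL_n(\R)$ by the absolute polynomial equation $\det(x)^2=1$ and has trivial intersection with the positive scalar matrices $\R_+^\times I_n$, while any $g$ factors as $g=(g\cdot|\!\det g|^{-1/n}I_n)\cdot|\!\det g|^{1/n}I_n$ with the first factor of determinant $\pm 1$.  In particular $SL_n(L)\cdot\langle d_1\rangle$ is absolutely interpretable in $L$ (in the style of Proposition~\ref{pr:inter2}), so $L\equiv\R$ yields $SL_n(L)\cdot\langle d_1\rangle\equiv SL_n(\R)\cdot\langle d_1\rangle$; together with $B(H)\equiv(\R_+)^\times$, the Feferman--Vaught theorem on direct products gives $H\equiv GL_n(\R)$.

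For $(1)\Rightarrow(2)$ I would apply Theorem~A4 to get $H'\cong SL_n(L)$ for some real closed $L\equiv\R$, $Z(H)\equiv\R^\times$, and a short exact sequence with quotient $L^\times/(L^\times)^n$.  Since $L$ is real closed, $L^\times=\{\pm 1\}\times L_+^\times$ with $L_+^\times$ torsion-free divisible, and $(L^\times)^n$ equals $L^\times$ when $n$ is odd and $L_+^\times$ when $n$ is even, so $L^\times/(L^\times)^n$ has order at most~$2$.  The splitting $L^\times=\{\pm 1\}\times L_+^\times$ is first-order (the positive elements are exactly the squares), hence transfers to $Z(H)=T\times B(H)$ with $|T|=2$ and $B(H)\equiv(\R_+)^\times$ torsion-free divisible.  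Lemma~\ref{lem:split-models} then applies (the center splits over its torsion subgroup and the required factor is radicable), giving $H\cong Is_H(H')\times B(H)$.

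It remains to identify $Is_H(H')$ with $SL_n(L)\cdot\langle d_1\rangle$.  In the concrete model $GL_n(L)$, an element $g$ lies in $Is_{GL_n(L)}(SL_n(L))$ iff $\det(g)$ is a root of unity in $L^\times$, and for a real closed field the only such roots are $\pm 1$; thus $Is_{GL_n(L)}(SL_n(L))=SL_n(L)\cdot\langle d_1\rangle$ for both parities of $n$.  This subgroup is absolutely interpretable in $L$ by $\det^2=1$, so the interpretation transports through the abstract isomorphism $H'\cong SL_n(L)$ to give $Is_H(H')\cong SL_n(L)\cdot\langle d_1\rangle$.

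The main obstacle is this last identification at the abstract level: having only the data produced by Theorem~A4 and Lemma~\ref{lem:split-models}, one must exhibit an element of $H$ playing the role of $d_1(-1)$ and verify that its conjugation action on $H'\cong SL_n(L)$ matches the action of $d_1(-1)$ on $SL_n(L)$.  This forces one to trace through the interpretations of Section~\ref{sec:sln} so as to align the abstract isomorphism $H'\cong SL_n(L)$ with the concrete action of the finite quotient $H/(H'\cdot Z(H))$ on $H'$, using the rigidity of automorphisms of $SL_n(L)$ to pin the action down up to an inner twist by a diagonal matrix of determinant $-1$.
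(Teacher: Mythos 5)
Your overall skeleton matches the paper's: reduce $(1)\Rightarrow(2)$ to Theorem~A4, invoke the splitting $Z(H)=\{\pm\one\}\times B(H)$ coming from the first-order definability of squares in a real-closed multiplicative group, and apply Lemma~\ref{lem:split-models} to extract $B(H)$ as a direct factor. The $(2)\Rightarrow(1)$ direction is also essentially the paper's (the paper uses an ultrapower argument where you argue absolute interpretability; both are fine, and Feferman--Vaught finishes).

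The genuine gap is exactly where you flag it, and you do not close it. You are left, after Lemma~\ref{lem:split-models}, with $H\cong Is_H(H')\times B(H)$ and the task of showing $Is_H(H')\cong \SL_n(L)\cdot\langle d_1\rangle$. You propose to ``transport the interpretation through the abstract isomorphism $H'\cong SL_n(L)$'' and pin down the action of the missing coset representative via rigidity of $\Aut(SL_n(L))$; but the abstract isomorphism $H'\cong SL_n(L)$ from Theorem~A4 says nothing about how the elements of $Is_H(H')\setminus H'$ act on $H'$, so there is real work left to do, and the rigidity route would also have to control the copy of $Z(H)\cap Is_H(H')$. The paper sidesteps all of this with one observation you do not make: $Is_G(G')=\SL_n(F)\cdot\langle d_1\rangle$ is itself an absolutely and uniformly definable subgroup of $G$ (e.g.\ as $\{x\in G: x^2\in G'\}$, since $G'$ is uniformly definable and the torsion of $F^\times$ is $\{\pm\one\}$), and it is a large subgroup of $\GL_n(F)$ hence regularly bi-interpretable with $F$ by the machinery of Sections~3--4. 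Regular bi-interpretability then transfers directly to $H$: the same formulas define a subgroup of $H$, necessarily equal to $Is_H(H')$, isomorphic to $\SL_n(L)\cdot\langle d_1\rangle$ for the $L\equiv F$ already produced. This closes your gap in one stroke, and is the step you should add.
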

\begin{proof}
(1.) $\Rightarrow$ (2.): Clearly, $G=\GL_n(\R)$ satisfies the conditions of Lemma~\ref{lem:split-models}, so $\GL_n(\R) \cong  Is_G(\SL_n(\R)) \times B(G)$ as described while $Is_G(G')=Is_G(\SL_n(\R)) =\SL_n(L)\cdot \langle d_1\rangle$. This latter subgroup is an absolutely and uniformly definable subgroup of $G$, which is regularly bi-interpretable with $F$. Therefore, $Is(H')\cong \SL_n(\R)\cdot \langle d_1\rangle$. The subgroup $B(G)$ is definable in $Z(G)$, by the equation 
$$\Phi(x)=\exists y (y^2=x)$$    
This subgroup is both torsion-free and radicable, both properties axiomatizable with a set of first-order sentences. Hence, $B(H)\equiv (\R_+)^\times$ splits as a direct summand from the group $H$, the complement being $Is_H(H')$, whose structure has been described above. 

(2.) $\Rightarrow (1.)$: If $L\equiv \R$, then $\SL_n(L)\cdot\langle d_1\rangle \equiv \SL_n(\R)\cdot\langle d_1\rangle$ by an ultrapower argument and the fact that the action of $d_1 \in \GL_n$ on $\SL_n$ in both cases is the same. The rest follows from the Feferman-Vaught Theorem.
\end{proof}
\subsection{NFT-fields}
Let $F$ be an NFT field; then the sequence in~\eqref{Sequence:egn}, $Is(G')=\SLF \rtimes  T(d_1(\FM))$ is uniformly definable in $G=\GLF$ and clearly regularly bi-interpretable with $F$. Let $T$ be the torsion subgroup of $Z(G)$ and $G_0\leq Z(G)$ its complement. We call any subgroup $G_0$ with this description an \emph{ addition of $G$} and the quotient $G_f = G/G_0$ a \emph{foundation of $G$}. Since $T$ is a direct product of cyclic groups, such a subgroup $G_0$ always exists. Hence, $G$ is a central extension of $G_0$ by $G_f$. Note that $G_f=G/G_0$ contains a copy of $\SLF$, since $G' \cap G_0 =1$. 
\begin{lemma}\label{lem:additon-auto} Let $F$ be an NFT-field. Then, for any two additions $A_1$ and $A_2$ of $G=\GLF$, there exists an automorphism $\phi$ of $\GLF$ such that $\phi(A_1)=A_2$.
\end{lemma}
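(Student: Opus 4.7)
My approach is to realize $\phi$ as a central automorphism of $G = \GLF$. Since $F$ is an NFT-field, the torsion subgroup $T = T(\FM)$ is finite (and cyclic), and $Z(G) \cong \FM$ splits as $T \times B$ with $B$ torsion-free. Both additions $A_1, A_2$ are complements of $T$ in $Z(G)$, so there is a unique homomorphism $\psi : A_1 \to T$ with $A_2 = \{a \cdot \psi(a)^{-1} : a \in A_1\}$. I seek an automorphism of $G$ whose restriction to $Z(G)$ is the identity on $T$ and sends $a \mapsto a\psi(a)^{-1}$ on $A_1$.

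I consider central twists $\phi_\chi(g) = g \cdot \chi(\det g)$, where $\chi : \FM \to Z(G) \cong \FM$ is a homomorphism. Such $\phi_\chi$ is automatically a group endomorphism of $G$, and is an automorphism whenever $\sigma_\chi(\alpha) = \alpha \cdot \chi(\alpha)^n$ is a bijection of $\FM$; this is automatic when $\chi$ takes values in the finite subgroup $T$. Under $Z(G) \cong \FM$, the restriction of $\phi_\chi$ to $Z(G)$ is $\alpha \mapsto \alpha \cdot \chi(\alpha^n)$, so the requirement $\phi_\chi(A_1) = A_2$ translates to $\chi(a^n) = \psi(a)^{-1}$ for all $a \in A_1$. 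Since $A_1$ is torsion-free, the $n$-th power map is injective on $A_1$, and this prescription well-defines $\chi$ on the subgroup $A_1^n \subseteq \FM$.

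The main obstacle is extending $\chi$ from $A_1^n$ to a homomorphism $\FM \to T$. Using the splitting $\FM = T \times A_1$, it suffices to extend from $A_1^n$ to $A_1$, which is a lifting problem: one must choose $\chi(a) \in T$ with $\chi(a)^n = \psi(a)^{-1}$ for each $a \in A_1$, compatibly across $a$. The NFT hypothesis on $F$ controls when this lifting succeeds, via the structure of the finite group $T$ and of the quotient $A_1/A_1^n$; in particular one uses that the image of $\psi$ lives inside $T$ and that $B = \FM/T$ has a tractable relationship with $n$-th powers for NFT fields. When the central twist alone does not suffice, I expect to combine it with the contragredient automorphism $g \mapsto (g^T)^{-1}$ (which restricts to inversion on $Z(G)$) and with field automorphisms $\sigma \in \Aut(F)$ (whose restrictions to $Z(G) \cong \FM$ further permute the complements of $T$). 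Using the classical description of $\Aut(\GLF)$ for $n \geq 3$ as generated by inner, field, contragredient, and central automorphisms, one realizes the desired transformation $A_1 \mapsto A_2$ as a composition of these types, with the central twist carrying the main content of the argument.
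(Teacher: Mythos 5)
Your general strategy is the same as the paper's: both arguments realize $\phi$ as a determinant-driven central twist $g \mapsto g \cdot \chi(\det g)$ with $\chi$ valued in the torsion part $T$ of the center. In the paper's formulation, $GL_n(F)$ is written as $\SLF \rtimes d_1(F^\times)$ and $\phi$ sends $s\cdot d_1(\alpha)$ to $s\cdot d_1(\alpha)\cdot\diag(\mu(\alpha))$, where $\mu\colon F^\times\to T$ is the ``difference of transversals'' homomorphism, defined to kill $T$ and to equal $\eta_1\eta_2^{-1}$ on a complement; the bulk of the paper's work is checking that the resulting map is a well-defined bijective endomorphism (via the auxiliary subgroup $H_1$ and the intersection argument $\SLF\cap H_1=1$). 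Because $\mu$ is posited directly as a homomorphism on all of $F^\times$, the paper never frames a lifting problem at all.

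Your proposal diverges at exactly the point you flag as the ``main obstacle.'' You correctly observe that the restriction of the central twist to the center is $\diag(\alpha)\mapsto\diag(\alpha)\cdot\chi(\alpha^n)$ --- the exponent $n$ enters because $\det\diag(\alpha)=\alpha^n$ --- and so the constraint you must solve is $\chi(a^n)=\psi(a)^{-1}$ on $A_1$, a genuinely nontrivial lifting/$n$-th-root problem in $\mathrm{Hom}(\,\cdot\,,T)$. You do not solve it; you only say that the NFT hypothesis ``controls when this lifting succeeds'' and then fall back on the contragredient and field automorphisms. That fallback is not worked out and does not obviously help: the contragredient restricts to inversion on $Z(G)$, which permutes complements of $T$ but in a very constrained way, and field automorphisms only move complements within the orbit of $\mathrm{Aut}(F)$ acting on $F^\times$; neither is shown to supply the missing $n$-th root of $\psi^{-1}$. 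So the proposal has a real gap precisely at the step that carries the content of the lemma. (It is worth noting that your $n$-th power bookkeeping is more careful than the paper's concluding computation, which asserts $\phi(\diag(\alpha))=\diag(\alpha\mu(\alpha))$ where the construction yields $\diag(\alpha\mu(\alpha)^n)$; you have correctly located the delicate point, but you have not closed it.)
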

\begin{proof}
We recall that if $\eta_i: \FM/T \to \FM$ are the corresponding transversals of $A_i$'s, then $\mu: \FM/T \to T$ defined by $\mu(\alpha)=  \eta_1(\alpha)\eta_2^{-1}(\alpha)$ is indeed a homomorphism. In fact, since $\FM\cong T \times \FM/T$, we can just assume that $\mu: \FM \to T$ is defined as $\mu(T)=1$ and $\mu|_{\FM/T}= \eta_1\eta_2^{-1}$. So, to prove the statement, we need to show that there exists an automorphism $\phi$ of $\GLF$ such that $\phi(\diag(\alpha))=\diag(\alpha\cdot \mu(\alpha))$. To this end, consider the subgroup $H$ of $\GLF$ generated by $\SLF$ and the subgroup $H_1=\{d_1(\alpha)\diag(\phi(\alpha))| \alpha\in \FM\}$. In fact, $H= \SLF \cdot H_1$. All we really need to show is that if $x\in \SLF \cap H_1$, then $x=1$. So, assume that $x$ is such an element. Then $\alpha \mu^n(\alpha) =det(x)=1$. Let $\alpha = \beta \cdot t$ where $\beta$ is of infinite order and $t$ is torsion. Clearly, we must have $\beta=1$, which implies that $\mu(\alpha)=\mu(\beta)=1$. Hence $t=1$ and $x=1$.

 Next, we claim that $H=\GLF$. We already know that $\SLF \leq H$. Now we need to prove that $d_1(\alpha)\in H$ for all $\alpha \in \FM$. Write again $\alpha= \beta \cdot t$ as above. Note that if $\beta=1$, then $d_1(\alpha)diag(\mu(\alpha))=d_1(t)$. So, in fact, for all $t\in T(\FM)$, $d_1(t)\in H$. Hence $d_i(t)=d_1(t) d_1(t^{-1})d_i(t) \in H$. So for all such $t$, $diag (t)\in H$. Since $im(\mu)\leq T$, all $\diag(\mu (\alpha))\in H$, and hence $d_1(\alpha)\in H$ for all $\alpha\in \FM$. 
 
Next, define $$\phi: \GLF \to \GLF,\quad (g, d_1(\alpha))\mapsto (g,d_1(\alpha)\diag(\mu(\alpha))) $$ The map $\phi$ is a homomorphism:\\
\begin{align*}
\phi( (g_1, d_1(\alpha_1))\cdot  (g_2, d_1(\alpha_2)))&= \phi ( (g_1g_2^{d_1(\alpha_1)}, d_1(\alpha_1)d_1(\alpha_2))\\
&=  (g_1g_2^{d_1(\alpha_1)}, d_1(\alpha_1\alpha_2diag(\mu(\alpha_1\alpha_2)))\\
&=  (g_1g_2^{d_1(\alpha_1 )\diag(\mu (\alpha_1)}, d_1(\alpha_1\alpha_2\diag(\mu(\alpha_1\alpha_2)))\\
&= (g_1, d_1(\alpha_1)\diag(\mu(\alpha_1)))\cdot  (g_2, d_1(\alpha_2)\diag(\mu (\alpha_2))
\end{align*}
It is also injective. Again, if $\alpha=\beta\cdot t$ with $\beta$ of infinite order and $t$ torsion and $d_1(\alpha)\diag(\phi(\alpha))=1$, we immediately conclude that $\beta=1$ and $\mu(\alpha)=1$, implying that $t=1$, hence $\alpha=1$. This implies that $\phi(g,d_1(\alpha))=1$, $g=d_1(\alpha)=1$.

 By the above, it is also surjective, and 
$$\phi(\diag(\alpha))= \diag (\alpha\cdot \mu (\alpha))$$

\end{proof}

\begin{lemma} \label{lem:foundation-elem} Let $F$ be an NFT-field, $G=\GLF$ with an addition $G_0$ and $H\equiv G$. Then there exists a subgroup $H_0\leq Z(H)$ such that $Is(H')\cdot Z(H) = Is(H')\times H_0$ and $G_f\equiv H/H_0$.
\end{lemma}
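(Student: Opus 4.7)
The plan is in two stages: first, produce the subgroup $H_0$ and verify the decomposition $Is(H')\cdot Z(H)=Is(H')\times H_0$; second, transfer the elementary equivalence $G_f\equiv H/H_0$ through a Keisler--Shelah ultrapower argument leveraging Lemma~\ref{lem:additon-auto}.

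\textbf{Stage one.} First I would observe that in $G$ the intersection $Is(G')\cap Z(G)$ coincides with $T(Z(G))$: a scalar $\diag(\alpha)$ belongs to $Is(G')=\SLF\cdot d_1(T(\FM))$ iff some power of it lies in $\SLF$, iff $\alpha$ is torsion in $\FM$. Since $Is(G')$ is uniformly definable in $G$ (stated in the preamble of Section~\ref{GLO:sec}), this equality is first-order and transfers to $H$: $Is(H')\cap Z(H)=T(Z(H))$. Because $F$ is NFT, the cardinality of $T=T(\FM)$ is fixed by a first-order sentence and $\FM/T$ is torsion-free (captured by an axiom scheme), so in $H$ the group $T(Z(H))$ is finite of the same size $|T|$ and $Z(H)/T(Z(H))$ is torsion-free. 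A one-line purity check (if $t=na$ with $t\in T(Z(H))$, then the image of $a$ in the torsion-free quotient is $n$-torsion, hence zero, so $a\in T(Z(H))$) shows $T(Z(H))$ is pure in $Z(H)$; as a finite pure subgroup it is a direct summand, giving $Z(H)=T(Z(H))\oplus H_0$. Then $H_0\leq Z(H)$ is central in $H$, $H_0\cap Is(H')\subseteq H_0\cap T(Z(H))=1$, and $H_0\cdot Is(H')\supseteq H_0\cdot T(Z(H))\cdot Is(H')=Z(H)\cdot Is(H')$, so $Is(H')\cdot Z(H)=Is(H')\times H_0$. By construction $H_0$ is an addition of $H$.

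\textbf{Stage two.} For $G_f\equiv H/H_0$ I would invoke Keisler--Shelah to choose an ultrafilter $D$ on an index set $I$ and an isomorphism $\Phi\colon G^I/D\xrightarrow{\sim} H^I/D$. By the e-interpretability of $\GL_n$ in the base field (Proposition~\ref{pr:inter2}), $G^I/D\cong\GL_n(F^I/D)$, and since $T$ is finite its ultrapower is just $T$ itself, so $F^I/D$ is again NFT. Hence Lemma~\ref{lem:additon-auto} applies inside the ultrapower. The subgroups $G_0^I/D$ and $H_0^I/D$ are additions of the respective ultrapower groups (being complements of the torsion parts of the corresponding centers, a property that passes through ultrapowers), so $\Phi(G_0^I/D)$ and $H_0^I/D$ are both additions of $H^I/D$, and Lemma~\ref{lem:additon-auto} provides an automorphism $\psi$ of $H^I/D$ with $\psi(\Phi(G_0^I/D))=H_0^I/D$. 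Composing, $\psi\circ\Phi$ descends to an isomorphism of the quotients, and the standard identity $(K/K_0)^I/D\cong(K^I/D)/(K_0^I/D)$ identifies this with $G_f^I/D\xrightarrow{\sim}(H/H_0)^I/D$. Keisler--Shelah then yields $G_f\equiv H/H_0$.

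\textbf{Main obstacle.} The crux is precisely that additions are not uniformly definable, so I cannot directly transfer the quotient $G/G_0$ to an $H$-side quotient by a purely syntactic elementary argument. The workaround is to run everything at the level of ultrapowers, where Lemma~\ref{lem:additon-auto} supplies a global uniqueness up to automorphism of all additions --- provided the ultrapower is of NFT type. Thus the delicate point is checking that the NFT hypothesis survives ultrapowers (immediate from the finiteness of $T$) and then applying Lemma~\ref{lem:additon-auto} inside the ultrapower rather than separately in $G$ or $H$.
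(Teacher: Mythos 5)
Your proposal is correct and follows essentially the same route as the paper: pass to ultrapowers $G^I/D\cong H^I/D\cong\GL_n(F^I/D)$ via Keisler--Shelah, note that additions commute with ultrapowers, and apply Lemma~\ref{lem:additon-auto} inside the ultrapower to conjugate the image of $G_0^*$ onto $H_0^*$. Your stage one simply makes explicit the existence and splitting of $H_0$ (via purity of the finite torsion summand of $Z(H)$), a point the paper's proof leaves implicit.
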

\begin{proof}
 We proceed with an ultrapower argument. Let $D$ be an ultrafilter $I$ such that $G^I/D \cong H^I/D $. We observe that $H^*=H^I/D$ is isomorphic to $\GL_n(F^I/D)$. Set $L=F^I/D$ and assume $H^*=\GL_n(L)$. Since $G'$ is of finite width, it is not very hard to prove $(G')^*=(G^*)'$ and then to conclude that $Is((G')^*)=(Is(G'))^*$. Therefore, if $G_0$ is an addition to $G$, then $G^*_0=G_0^I/D$ is an addition to $G^*=G^I/D$. Assume that $A_1$ is the image of $G^*_0$ in $H^*$ under the assumed isomorphism. By Lemma ~\ref{lem:additon-auto} there exists an automorphism of $H^*$ mapping $A$ onto $H_0^*$. Hence, $$(G_f)^*\cong G^*/G^*_0 \cong H^* /H_0^*\cong (H_f)^*$$ implying that $G_f\equiv H/H_0$. 
\end{proof}
\begin{proposition} \label{prop:GLn-main}
    Let $F$ be an NFT-field, $G=\GLF$ and $H\equiv G$. Then there exists an addition $H_0$ of $H$, that is, a subgroup $H_0\leq Z(H)$, such that $Is(H')Z(H)=Is(G')\times H_0$, a field $L\equiv F$, and an addition $K_0$ of $K=\GL_n(L)$ such that $H/H_0 \cong  K_f$.
\end{proposition}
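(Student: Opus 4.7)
The plan is to combine Theorem~A4, which supplies the field $L\equiv F$ and the algebraic skeleton of $H$, with the NFT hypothesis (to trivialise torsion), Lemma~\ref{lem:foundation-elem} (to transport the elementary equivalence $H\equiv G$ down to the quotients), and the regular bi-interpretability of $Is(G')$ with $F$ noted just before the proposition (to upgrade elementary equivalence of the quotients to an honest isomorphism).

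Concretely, I first apply Theorem~A4 to $H\equiv G$ to obtain a field $L\equiv F$ with $H'\cong\SL_n(L)$, $Z(H)\equiv L^\times$, and the short exact sequence $1\to H'Z(H)\to H\to L^\times/(L^\times)^n\to 1$. Since $F$, hence $L$, is NFT, the torsion subgroup $T:=T(Z(H))$ is finite, and finite abelian groups always split off, so I fix a torsion-free complement $H_0\le Z(H)$ with $Z(H)=T\times H_0$. Because $Is(H')$ is uniformly definable in $H$ with $Is(H')\cap Z(H)=T$ and $T\subseteq Is(H')$, we get $Is(H')\cap H_0\subseteq T\cap H_0=1$ and $Is(H')Z(H)=Is(H')\cdot T\cdot H_0=Is(H')\times H_0$, so $H_0$ is an addition of $H$. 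Now set $K:=\GL_n(L)$ and pick any addition $K_0$ of $K$ (which exists since $L$ is NFT). By Lemma~\ref{lem:foundation-elem} applied to $(G,G_0)$ with $H$, and separately with $K$ (using $K\equiv G$), we have $G_f\equiv H/H_0$ and $G_f\equiv K_f$, giving $H/H_0\equiv K_f$.

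To upgrade this elementary equivalence to an isomorphism, note that $Is(G')\cap G_0=T\cap G_0=1$, so the quotient map $G\to G_f$ is injective on $Is(G')$ and the regular bi-interpretability of $Is(G')$ with $F$ descends to $G_f$. The remaining structural data of $G_f$---the finite central subgroup $Z(G_f)=T$ and the abelian quotient $G_f/Is(G')$, together with the extension class gluing them above $Is(G')$---is uniformly interpretable in $F$, yielding regular bi-interpretability of $G_f$ itself with $F$. Combined with $H/H_0\equiv G_f$, this forces $H/H_0\cong (\GL_n(L'))_f$ for some $L'\equiv F$, and choosing $L'=L$ and adjusting $K_0$ via Lemma~\ref{lem:additon-auto} yields the desired $H/H_0\cong K_f$. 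The main obstacle is this final step: lifting regular bi-interpretability from $Is(G')$ to all of $G_f$, which requires showing that the central extension class describing $G_f$ above $Is(G')$ is rigidly encoded in $F$ in a way that is uniform across elementarily equivalent fields.
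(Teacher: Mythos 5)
Your structural setup is correct and closely parallels the paper's: invoking Theorem~A4 for $L\equiv F$ and the skeleton of $H$, using finiteness of the torsion in $Z(H)$ (from the NFT hypothesis) to produce the addition $H_0$, verifying $Is(H')\cap Z(H)=T$ so that $Is(H')Z(H)=Is(H')\times H_0$, and then invoking Lemma~\ref{lem:foundation-elem} (applied to both $H$ and to $K=\GL_n(L)$) to get $H/H_0\equiv G_f\equiv K_f$.

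The gap is exactly where you flag it, and it is not a small one. You want to promote the elementary equivalence $H/H_0\equiv K_f$ to an isomorphism by claiming that $G_f$ itself is regularly bi-interpretable with $F$. The paper never makes and never needs such a claim; it asserts regular bi-interpretability with $F$ only for the definable subgroup $(G_f)'\cdot T(\Delta_1)$, which has finite index in $G_f$ modulo centre. Passing from bi-interpretability of this core to the whole of $G_f$ requires exactly what you suspect: encoding uniformly in $F$ the extension data of $G_f$ above its $SL_n$-core, and showing that that data is first-order rigid. You do not carry this out. The paper instead sidesteps the issue by constructing the isomorphism $\phi:H_f\to K_f$ directly: it introduces the definable subgroup $\Delta_1$ via the formula $\Phi(x)$, fixes definable isomorphisms $\psi_G:\Delta_1(G_f)/Z(G_f)\to F^\times$ and $\psi_H:\Delta_1(H_f)/Z(H_f)\to L^\times$, records the first-order identity~\eqref{eqn: xzn} (which transfers from $G_f$ to $H_f$ because $H/H_0\equiv G_f$), presents $K_f$ explicitly as the quotient~\eqref{eqn:K_f}, and then defines $\phi$ piecewise (identity on $(H_f)'\cdot Z(H_f)$, and on coset representatives of $\Delta_1$ by a formula involving the transversal $\mu$ of $K_0$) and checks well-definedness on the overlap using~\eqref{eqn: xzn}. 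In effect this concrete construction \emph{is} the proof that the extension class is first-order rigid, but phrased as an explicit map rather than as a general bi-interpretability statement. Without supplying either that construction or a proof of bi-interpretability of $G_f$ with $F$, your argument is incomplete.
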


\begin{proof} Since $G'=\SLF$ and $G'\cap G_0=1$ and $G_0\leq Z(G)$, $(G_f)'\cong \SLF$. Therefore $(G_f)'$ is regularly bi-interpretable with $F$ with respect to the images of the constants $\la{T}$ in $G_f$. We shall identify $(G_f)'$ with $\SLF$ and note that the image of $Is(G')$ in $G_f$ is also isomorphic to $Is(G')$. Consider an addition $H_0$ of $H$ as in Lemma~\ref{lem:foundation-elem}, and let $H_f=H/H_0$. We can now conclude that $(H_f)'\cong \SL_n(L)$ for some field $L\equiv F$.  Again, to keep the notation light we identify $(H_f)'$ with the copy of $SL_n(L)$ in $H_f$. From this point on, we also set $K=\GL_n(L)$ and pick an addition $K_0$ of $K$. 

Consider the first-order formula, say $\Phi(x)$, resulting from the conjunction of all the following:
$$ \exists y \in T_{ij} (xt_{ij}x^{-1}= y), \text{ if } i=1 \text{ or } j=1$$
$$ xt_{ij}x^{-1}= 1 , \text{ if } i\neq 1 \text{ and }  j\neq 1.$$
The formula defines a subgroup $\Delta_1(G_f)$. Furthermore, $G_f\models \Theta \wedge \Psi$, where
$$\Theta: \forall y ((y\in T_{12}\wedge y\neq 1) \to \exists x (\Phi(x) \wedge xt_{12}x^{-1}=y)$$
and 
$$\Psi: \forall y_1,y_2 ((\Phi(y_1)\wedge \Phi(y_2) \wedge y_1t_{12}y_1^{-1}=y_2t_{12}y_2^{-1}) \to y_1y_2^{-1} \in Z(G_f)) $$
 Hence $\Delta_1(G_f)/Z(G_f)$ is definably isomorphic to $\FM$ via an isomorphism, say $\psi_G: \Delta_1/Z(G_f) \to \FM$. The same formula $\Phi$ defines in $H$ a subgroup $\Delta_1(H)$  isomorphic to $L^\times \cdot Z(H_f)$ where $\Delta_1(H_f)/Z(H_f)$ is isomorphic to $L^\times$ via an isomorphism, say $\psi_H$ and $H_f$ is generated by $\Delta_1(H_f)$ and $(H_f)'$. Since $Z(G_f)$ is finite, $(G_f)'\cdot T(\Delta_1(F))$ is definable in $G_f$ and regularly bi-interpretable with $F$. Hence, we might identify $H_f \cdot Z(H_f)$ with the copy of $SL_n(L)\cdot Z (K_f)$ in $K_f$. 
 
 In $G=\GLF$, we have that for any $\alpha\in \FM$, Equation~\eqref{eqn:d_1-to-n} holds in $G$. Therefore, the following holds in $G_f$ 
\begin{equation} \label{eqn: xzn} G_f\models \forall x\in \Delta_1(G_f) \left((xZ)^n=\prod_{i=2}^{n} d_1(\psi_G(xZ)) d_i((\psi_G(xZ))^{-1}) Z\right)\end{equation}

The same holds for $H_f$ with obvious substitutions. Now, let $\mu:L^\times \to T(L^\times)$ be the defining transversal (homomorphism) of the addition $K_0$. Define a map $\phi:H_f \to K_f$ such that
\begin{itemize}

\item $\phi=Id$ on $H_f'\cdot Z(H_f)$
\item $\phi(\alpha)= d_1(\psi(\alpha)[\mu(\psi(\alpha))]^{-1})$, where $\alpha\in \Delta_1$, ranges over all representatives of the cosets of $\Delta_1/Z$. Note that $\Delta_1$ splits over $Z$ and $\psi$ depends only on the cosets.
\end{itemize}
For $\alpha$ as above, $\psi(\alpha) \in L^\times$ can be written as $t \cdot \beta$, where $t$ is torsion and $\beta$ is torsion-free. The element $t$ is uniquely determined. 

Let us agree that in this context $\diag$ is understood as an isomorphism $\diag: T(L^\times) \to Z(K_f)$.

Again identifying $\SL_n(L)$ with its copy in $K_f$ we can verify that
\begin{equation} \label{eqn:K_f} K_f \cong \left[\left(\SL_n(L) \cdot Z(G_f)\right)\rtimes L^\times\right] /N\end{equation}
where $L^\times$ acts on $\SL_n(L)$ as $d_1(L^\times)$ and $N$ is the normal subgroup generated by all the following elements as $\alpha$ ranges over $L^\times$:
\begin{equation} \label{eqn:K_f-relation} \left(\prod_{i=2}^{n} (d_1(\alpha) d_i(\alpha)^{-1}) \diag (t \left[\mu(\alpha))\right]^{-1}, \alpha^{-n}\right) \end{equation}

Now, since~\eqref{eqn: xzn} also holds in $H_f$ with obvious substitutions, we observe that for $\alpha\in \Delta_1$ as described, 
$$\phi^n(\alpha)= \diag\left(t[\mu(\psi(\alpha)]^{-1}\right)\prod_{i=2}^{n} d_1(\psi(\alpha)) d_i(\psi(\alpha))^{-1}$$
 Hence, we are guaranteed that $\phi$ is well defined on $\Delta_1\cap [(H_f)'\cdot Z(H_f)]$. This finishes the proof.\end{proof} 
\subsubsection{Abelian Deformations of $\GL_n$ over NFT-fields}
Let $L$ be an NFT-field, and let $H=\GL_n(L)$. Consider an addition $H_0$ and the corresponding foundation $H_f$. Identify $(H_f)'$ with $\SL_n(F)$ and let:
\begin{enumerate}
\item $\Sigma_1(H_f)=\{(\prod_{i=2}^{n} (d_1(\alpha) d_i(\alpha)^{-1})|\alpha\in L^{\times}\}$;
    \item $\Delta_1(H_f)=Is_{H_f}(\Sigma_1(H_f) \cdot Z(H_f))$ [Note that this is the same subgroup as the one defined in the proof of~\ref{prop:GLn-main}.]
    \item Let $\pi:H \to H_f$ be the canonical epimorphism and let $H_1=(H_f)'\cdot Z(H_f)$. Observe that $\pi(Is_H(H'))=H_1$. Since $Is_H(H') \cap H_0=1$, $\pi$ maps $Is_H(H')$ isomorphically onto $H_1$
\end{enumerate}

We note that $H_1\cap \Delta_1= \Sigma_1 \cdot Z(H_f)$. By~\eqref{eqn:K_f} and \eqref{eqn:K_f-relation} we might represent elements of $H_f$ as pairs $(c,a)$ where $c\in H_1$, $a\in \Delta_1$. Furthermore, there exists a symmetric 2-cocycle $p:  A/A^n \times A/A^n \to \Sigma_1\cdot Z(H_f)$, where the product in $H_f$ is given by:
$$(c_1,a_1)(c_2,a_2)=(c_1c_2^{a_1}p(a_1\Sigma_1,a_2\Sigma_2), a_1a_2).$$
   
\begin{defn} 
\label{def:ab-def-gln} 
  Let $L$ be an NFT-field, $L^\times= A(L)  \times T(L)$, where $A=A(L)$ is torsion-free and $T=T(L)$ is torsion, $H=\GL_n(L)$ and $H_f=G/G_0$ for some addition $H_0$ of $H$. Consider $B \equiv A$ and $h\in S^2(A/A^n, B)$. Define $E=GL_n(L,h,B)$ as the central extension:
$$1\to B\to E \to H_f \to 1$$
where the group multiplication is given by:
$$(b_1, c_1a_1)(b_2,c_2a_2)=(b_1b_2 h(a_1\Sigma_1,a_2\Sigma_1), c_1c_2^{a_1}p(a_1\Sigma_1, a_2\Sigma_1)a_1a_2)$$
for $b_i\in B$, $c_i\in H_1$ and $a_i\in \Delta_1(H_f)$.
\end{defn}
\begin{thm}\label{thm:NFT-gln}
Let $F$ be an NFT-field. If $H\equiv \GLF$, then $H\cong GL_n(L,h,B(H))$, as in Definition~\ref{def:ab-def-gln}, for some $L\equiv F$, $h\in S^2 (A/A^n, B)$, and $B\equiv \FM/T$.    
\end{thm}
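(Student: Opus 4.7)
The plan is to combine Proposition~\ref{prop:GLn-main} with a structured choice of section in the resulting central extension to extract a symmetric 2-cocycle on $A/A^n$ with values in $B(H)$. First I would invoke Proposition~\ref{prop:GLn-main} to obtain a field $L\equiv F$, an addition $H_0 := B(H)$ of $H$, and an addition $K_0$ of $K := \GL_n(L)$, so that the canonical projection induces an isomorphism $H/H_0\cong K_f := K/K_0$. Because $Z(H)\equiv Z(\GLF)\cong \FM = T\times A(F)$ and $H_0$ is the torsion-free complement of $T(Z(H))$ inside $Z(H)$, one gets $B := H_0 \equiv A(F)\cong \FM/T$, furnishing the data $L$ and $B$ required in the statement.

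Next, the short exact sequence $1\to B\to H\to K_f\to 1$ is central since $B\leq Z(H)$, so it is classified by a 2-cocycle once a set-theoretic section $\sigma:K_f\to H$ is fixed. By Proposition~\ref{prop:GLn-main}, $Is(H')\cdot Z(H) = Is(H')\times B$, so the restriction of the projection $H\twoheadrightarrow K_f$ to $Is(H')$ is an isomorphism onto $H_1 := (K_f)'\cdot Z(K_f)$; I take $\sigma|_{H_1}$ to be its inverse, which is a genuine homomorphism. On a transversal of $\Sigma_1\cdot Z(K_f)$ in $\Delta_1(K_f)$ I choose arbitrary lifts $\sigma(a)\in H$, and extend by $\sigma(ca) := \sigma(c)\sigma(a)$ for $c\in H_1$; this is unambiguous because the overlap $H_1\cap \Delta_1 = \Sigma_1\cdot Z(K_f)$ lies inside $H_1$, where $\sigma$ is already a homomorphism. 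The resulting 2-cocycle vanishes whenever one of its arguments lies in $H_1$, so it descends to a map $h:(K_f/H_1)^2\to B$; since $K_f/H_1\cong L^\times/(L^\times)^n$ while the torsion diagonal scalars $\diag(T)$ sit inside $Is(H')$, it further descends to $h:(A/A^n)^2\to B$. Symmetry of $h$ holds because $\Delta_1$ is abelian in $K_f$, so the commutators of lifts in $H$ lie in $H'\cap B\subseteq Is(H')\cap B = 1$, forcing lifts of $A/A^n$-elements to commute in $H$.

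Finally, I would verify that $H$, written in coordinates $(b,ca)$ with $b\in B$, $c\in H_1$ (lifted through $Is(H')$), and $a$ a chosen representative of $\Delta_1(K_f)/(\Sigma_1\cdot Z(K_f))$, multiplies according to the formula of Definition~\ref{def:ab-def-gln}: the $H_1$-component picks up exactly the cocycle $p$ already recorded in the structure of $K_f$ (together with the inherited action of $\Delta_1$ on $H_1$), while the $B$-component picks up exactly $h$ by construction. The main obstacle is the absorption of the torsion contribution $T/T^n$ of $L^\times/(L^\times)^n$ into the $Is(H')$-side so that what remains in $B$ is a cocycle on $A/A^n$ rather than on all of $L^\times/(L^\times)^n$; this is precisely where the hypothesis that $F$ is NFT is essential, since then $T$ is finite, splits off in $L^\times$, and the torsion diagonal scalars are automatically swept into the isolator $Is(H')$.
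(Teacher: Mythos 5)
Your proof takes essentially the same route as the paper: both start from Proposition~\ref{prop:GLn-main} to produce $L\equiv F$, the addition $H_0 = B$, and the identification $H/H_0\cong K_f$, then treat $1\to B\to H\to K_f\to 1$ as a central extension and extract the symmetric $2$-cocycle $h\in S^2(A/A^n, B)$ that measures what is not already pinned down by $K_f$. Where the paper phrases the final step as a decomposition of $\operatorname{Ext}(A/A^n, -)$ applied to $\Pi_1(H)=Is_H(Z(H)\cdot\Sigma_1(H))$, you instead build an explicit section $\sigma$ that is a homomorphism on $H_1$ (via the inverse of $\pi|_{Is(H')}$) and read $h$ off from it; this is a cleaner, more constructive presentation of the same idea, and your commutator argument for the symmetry of $h$ is a nice explicit touch.

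One point in your write-up that deserves a second look: to descend $h$ from $(K_f/H_1)^2$ to $(A/A^n)^2$, the torsion elements you need to absorb into $Is(H')$ are the images of $d_1(t)$ for torsion $t\in T(L^\times)$ (which indeed satisfy $d_1(t)^{|t|}=1\in H'$ and hence lie in $Is(H')$), not the scalar matrices $\diag(T)$; the latter already lie in $Z(K_f)$. With that correction, $\pi(Is(H'))$ already swallows the whole torsion contribution of $L^\times/(L^\times)^n$, so $K_f/\pi(Is(H'))\cong A/A^n$ directly and no genuine ``further descent'' is required. This is the same small imprecision present in the paper's identification of $H_1$ with $(H_f)'\cdot Z(H_f)$, so your argument is no worse off, but you should state explicitly which torsion lifts are being used.
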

\begin{proof}
Let $G= \GLF$. Pick any addition $G_0$ of $G$. Although it may not be definable in $G$, it is absolutely interpretable in $G$ as $Z(G)/T(Z(G))$. Therefore, in $H$ any addition $H_0$ is elementarily equivalent to $G_0$. The existence of $K=\GL_n(L)$ such that $H_f \cong K_f$ was established in Proposition~\ref{prop:GLn-main}. Therefore, $H$ fits in a central extension:
$$1\to H_0 \to H \xrightarrow{\pi} K_f \to 1.$$
We shall identify $H_f$ with $K_f$, and take $\pi: H \to H_f$ to be the canonical epimorphism. The restriction of $\pi$ to $Is(H')$ is an isomorphism. We identify $SL_n(L)\cong H'$ with its copy in $H$. Furthermore, the subgroup $\pi^{-1}(\Sigma_1(H_f))$ can also be isomorphically identified with a subgroup of $Is_H(H')$ and we denote it by $\Sigma_1(H)$. Now, $H$ is generated by $Is(H')$ and $\Pi_1(H)=Is_H(H_0 \cdot \Sigma_1(H))$. Therefore, the product in $H$ will be completely determined by the products in $Is(H')$, $\Pi_1(H)=Is_H(Z(H) \cdot \Sigma_1(H))$ and the conjugate action of the latter on the former. The conjugate action is completely determined by the structure of $H_f$, which is known up isomorphism. We fully understand the structure of $Is_H(H')$. The subgroup $\Pi_1(H)$ is an abelian group that fits into
$$ 1\to Z(H) \times \Sigma_1(H) \to \Pi_1(H) \to A/A^n\to 1.$$ 
In fact, $\Pi_1(G)$ is a definable abelian subgroup of $G$. Note that both $Z(G)$ and $\Sigma_1(G)$ are definable in $G$ with parameters $\mathcal{T}$ and $Is_G(Z(G)\cdot \Sigma_1(G))=\{ x\in G| x^n\in Z(G)\cdot \Sigma_1(G)\}$. Hence, the same formulas that define $\Pi_1(G)$ in $G$ define $\Pi_1(H)$ in $H$. Now, since
$$Ext(A/A^n, \Pi_1(H) \times H_0)\cong Ext(A/A^n, \Pi_1(H))\times T(Z(H))) \times Ext(A/A^n, H_0)$$ while the extension of $\Pi_1(H)\times T(Z(H))$ by $A/A^n$ is completely determined within $H_f$ by the 2-cocycle $p$ and the conjugate action of elements of $\Pi_1(H)$ on $H_1$. What is left to completely fix the product in $H$ is determined by a 2-cocycle $h\in S^2(A/A^n, H_0)$, which cannot necessarily be recovered from the first-order theory of $G$. \end{proof}
\subsubsection{A group elementarily equivalent to $\GL_n(\Q)$ but not isomorphic to any $\GL_n$} \label{sec:elemnotiso-gln}
Let $L$ be a countable non-standard model of the field $\Q$ and $H=\GL_n(L)$. Then by an argument similar to the one in the proof of Theorem~\ref{Z-inter-Q*:lem}, $L^\times$ contains a countable torsion-free divisible (radicable) subgroup $D$. Identify $D$ with a central subgroup of $\GL_n(L)$. Since $D$ is divisible, it will split from the center and from the group. Therefore,

$$\GL_n(\Q)\equiv \GL_n(L) \cong \GL_n(L) \times D \equiv \GL_n(\Q) \times D$$

First, $\GL_n(\Q) \ncong \GL_n(\Q) \times D$, since $Z(\GL_n(\Q))\cong \QM$ has no nontrivial divisible (radicable) subgroups. Now, assume $H= \GL_n(F) \cong \GL_n(\Q) \times D = G$ for some field $F$. Then $H'=\SL_n(F)$ and $G'=\SL_n(\Q)$. The isomorphism must map $H'$ onto $G'$, while by the abstract isomorphism theorem for $\SL_n$, $F\cong \Q$.  Contradiction!\qed


\end{document}